\newcommand{\mar}[1]{{\marginpar{\sffamily{\scriptsize #1}}}} \setlength{\marginparwidth}{0.6in}
\date{\today}
\numberwithin{equation}{section}%
\newtheorem{theorem}{Theorem}[section]
\newtheorem{proposition}{Proposition}[section]
\newtheorem{lemma}{Lemma}[section]
\newtheorem{definition}{Definition}[section]
\theoremstyle{plain}
\newtheorem{assumption}{Assumption}
\theoremstyle{definition}
\newtheorem{remark}{Remark}[section]
\DeclareMathOperator{\erf}{erf}
\DeclareMathOperator{\sinc}{sinc}
\DeclareMathOperator{\sgn}{sgn}
\DeclareMathOperator{\supp}{supp}
\DeclareMathOperator{\pv}{pv}
\DeclareMathOperator{\WF}{WF}
\DeclareMathOperator{\WFH}{WF_{\it h}}
\renewcommand{\sc}{semi\-classical}
\newcommand{\R}{{\mathbb R}}
\newcommand{\RR}{{\mathcal{R}}}
\renewcommand{\r}[1]{\eqref{#1}}
\renewcommand{\Xi}{B}
\newcommand{\PDO}{$\Psi$DO}
\newcommand{\HPDO}{$h$-$\Psi$DO}
\newcommand{\be}[1]{\begin{equation}\label{#1}}
\newcommand{\ee}{\end{equation}}
\renewcommand{\d}{\mathrm{d}}
\renewcommand{\i}{\mathrm{i}}
\title[The Radon transform with finitely many angles]{The Radon transform with finitely many angles}
\author[Plamen Stefanov]{Plamen Stefanov}
\address{Department of Mathematics, Purdue University, West Lafayette, IN 47907}
\thanks{P.S.\ partially supported by the National Science Foundation under
grant DMS-1900475.}
\begin{document}

\begin{abstract}
We study the Radon transform in the plane in parallel geometry possibly undersampled in the angular variables. We study resolution, aliasing artifacts, and edge recovery. 
\end{abstract} 
\maketitle
\setcounter{tocdepth}{1}
\tableofcontents

\section{Introduction} The purpose of this paper is to study the Radon transform $\RR f(\omega,p)$ in ``parallel geometry'', see \r{T1}, in the plane with discrete measurements. We assume that the measurements  $\RR f(\omega,p)$ are well sampled in the variable $p$ but undersampled in the angular variable $\omega$. This corresponds to practical situations where the measurements are taken at finitely many angles but not as many as needed for good resolution and to avoid aliasing; on the other hand, at each angle, the image is resolved well by a high enough resolution device. We are interested in describing the aliasing artifacts, the resolution limit, and in particular, recovery of edges and jump type singularities.

Sampling $\RR f$ pointwise for $f\in L^\infty_{\rm comp}$ is not a well posed problem since $\RR f$ does not have enough regularity to have well-defined pointwise values, even if $f$ is piecewise smooth. On the other hand, discrete measurements in practice are not done pointwise (even if $\RR$ is not a Radon transform) since pointwise, we would measure zero signal with significant noise. Typically, they are locally averaged. In the case of the Radon transform, the X-rays are not ideal rays; they are either collimated or issued from a very small source, and additionally blurred by diffraction, see also \cite{Cormack_78}. When the X-ray projection (as a function of the $p$ variable) is taken at each fixed angle, it is averaged over small detectors (pixels). On the other hand, the finite number of angles/directions of those projections could be modeled as pointwise measurements of an already locally averaged signal. That averaging can be passed to $f$ by Egorov's theorem, which allows us to think of pointwise measurements in the angular variable (not locally averaged) of a slightly blurred copy of $f$. 
To make things simple, as mentioned above, we assume  high enough resolution in the $p$ variable at each fixed angle so that we can assume formally that we have a function known for all $p$; and this can be justified by the sampling theory.  

The resulting recovery depends on the way it is done even if we just want to apply the filtered backprojection in a discrete setting. We compare two ``natural'' implementations of that formula, and show that  they produce different results, in particular each one produces aliasing artifacts, as expected, but they are different. We analyze the method we call ``direct'' in more detail since this is the commonly used one. The other one, which we call the ``interpolation'' method was already analyzed in \cite{S-Sampling}, and it turns out to produce a reconstruction which  is an angularly averaged version of the direct one, see Theorem~\ref{thm_conv}, making it of less interest, probably. 

We analyze the problem both with ``classical''  and \sc\ (asymptotic) methods. The classical point of view is:  how well or not classical singularities are resolved. The most general tool for that would be FIOs associated with a pair of cleanly intersecting Lagrangians, we refer to Remark~\ref{remark1}(a). More direct methods studying singularities added by a singular cutoff applied to the data, see, e.g., \cite{Frikel-Quinto, BorgFJSQ, Linh-15} can be used as well, see also Theorem~\ref{thm_WF} below.
We do not do full analysis --- we just study edge recovery, a partial case of recovery of conormal singularities.

The \sc\ (asymptotic) analysis follows in parts the theory developed by the author in \cite{S-Sampling}: an asymptotic sampling theory as the sampling step tends to zero for (linear) Fourier Integral Operators (FIOs) with a canonical relation being locally the graph of a map. The Radon transform is a particular example, and the approach has been applied to Thermoacoustic Tomography as well \cite{Chase_Sampling} and to the geodesic X-ray transform \cite{MS-sampling}. We assume that the sampling step is proportional to a small parameter $h>0$, and used the \sc\  pseudodifferential and FIO calculus. Then $\RR$ acts on functions depending on $h$ as well, oscillating highly but still smooth. This is one of the technical tools used in this paper. Using it, one can handle undersampling in $p$ as well, as in \cite{S-Sampling}. 

We want to emphasize that in sampling theory, the reconstruction from samples depends on the way the interpolation is done, naturally. It could be the Whittaker–Shannon interpolation formula ($\sinc$ based) or some version of it if there is oversampling, or even, say linear/bilinear, cubic interpolation, etc. The method we call ``direct'' has no interpolation involved, and yet,  sampling theory appears naturally through the Poisson summation formula, see section~\ref{sec_3}. 

The \sc\ treatment has the following advantages. Besides modeling dense enough measurements, it is also useful in  numerical computations, when the small parameter $h$ is proportional to  the step size (when using a mesh). 
Next, classical microlocal analysis is asymptotic in the sense that it cares about the Taylor-like expansion of the Fourier transform at the infinite sphere $|\xi|=\infty$. Roughly speaking, it misses what happens on the way there. An oscillating function, like $\cos(k x)$ with $k\to\infty$, for example, is smooth, thus negligible in  classical microlocal sense. In a \sc\ sense, it has \sc\ singularities, and it is not an approximate classical singularity in any reasonable sense; in fact, its weak limit is zero, as $k\to\infty$. 

It is known that $\RR$, restricted to finitely many directions, has a non-trivial kernel, see, e.g., \cite{Helgason-Radon}. In \cite{Louis-81}, Louis studies the ``ghosts'', i.e., the null-space. In \cite{Louis-84}, he describes the ghosts in all dimensions as a high-frequency phenomenon, generalizing previous works. This is close in spirit to our asymptotic approach but the methods and the conclusions we get are of a very different nature. Sampling for the Radon transform has been studied in \cite{Cormack_78, Natterer-book, Natterer-sampling1993,Rattey-sampling}, and more recently in \cite{S-Sampling, MS-sampling}, and by  Katsevich 
\cite{Katsevich_17, Katsevich2019analysis, alex2020analysis, Katsevich2021}. His approach is different from ours, and the conclusions cannot be compared directly. This work was inspired in part by a conversation by the author and Katsevich. The author thanks Fran\c{c}ois Monard for the discussions and for the references  \cite{Louis-81, Louis-84}.

\section{Preliminaries}
\subsection{The filtered backprojection} 
We work in the plane. 
The  Radon transform is defined by 
\be{T1}
\mathcal{R}  f(\omega,p) = \int_{x\cdot \omega =p}  f(x)\,\d\ell,
\ee
where   $\d\ell$ is the Euclidean length measure, and $\omega\in S^1$, say parameterized as 
\be{omega}
\omega(\varphi): = (\cos\varphi,\sin\varphi). 
\ee
We will denote by $\omega^\perp = (-\omega_2,\omega_1)$ its  rotation by $\pi/2$. 
We always think that $\varphi\in [0,2\pi]$ is  as a parameterization of the circle $S^1$, i.e., identifying $0$ and $2\pi$. More generally, we assume $\varphi\in \R/2\pi\mathbb Z$.  
The Radon transform is even, i.e., it is invariant under the map $(\omega,p)\mapsto (-\omega,-p)$, i.e., $(\varphi,p) \mapsto (\varphi+\pi, -p)$. When we study the microlocal properties of $\RR f$, we think of it as a function of $(\varphi,p)$. 

A popular inversion formula is the so-called filtered backprojection
\be{FBP}
f = \RR'\mathcal{H}g, \quad g=\RR g,
\ee
where $\mathcal{H} = \frac1{4\pi} H d_p$, with $d_p=\partial/\partial p$, and $ H$ being the Hilbert transform
\be{H}
H g(p)=\frac1\pi \pv \int \frac{g(s)}{p-s}\d s.
\ee
One of the advantages of this formula is that if $g=\RR f$ with $f$ compactly supported, then so is $g$; and to compute the inversion for $x$ in a compact set for $x$, we need to compute $Hd_p$ with $p$ and $s$ over a bounded interval (for every $\varphi$) only. We note that $H$ is the Fourier multiplier by $-\i \sgn(\hat p)$, therefore $Hd_p = |D_p|$. We denote by $\hat\varphi$ and $\hat p$ the  variables dual to $\varphi$ and $p$, respectively.  

\subsection{Discrete data} 
Assume we are given the Radon transform $\RR f(\omega,p)$ sampled on a (finite) discrete set of points $\{\omega_i, p_j\}$. We always assume that $\supp f\subset \mathcal{B}(0,R)$ with $R>0$ fixed, where $\mathcal{B}(0,R)$ is the ball with center $0$ and radius $R$.

We consider the following two methods of applying the filtered backprojection \r{FBP} given discrete data. The first one that we call the \textit{interpolation method} is to interpolate $\RR f(\omega_j, p_j)$ to get a function for all ``continuous'' $(\omega,p)$, and then apply \r{FBP}. This can be done approximately on a finer grid. The second one, which we call the \textit{direct method} is to apply $\mathcal{H}$, and then $\RR'$ using discrete approximations of each of those operators. We assume that there is oversampling in the $p$ variable, which allows us to recover $\RR f(\omega_j, p)$ for all $p$ with a small error. Then we can apply $\mathcal H$ to it. The direct method then is to perform numerical integration  by summing up $(\mathcal{H} \RR f)(\omega_j, x\cdot\omega_j)$ (i.e., replace the actual integral with Riemann sums \r{f_delta}), while the interpolation method interpolates $(\mathcal{H} \RR f)(\omega_j,p)$ first to $(\mathcal{H} \RR f)(\omega, p) $ and only \r{P2}, then sets $p= x\cdot\omega$ and integrates. 

To have the flexibility to consider the limited angle problem, let $\psi\in C^\infty(S^1)$ be a cut-off function, and assume we are given 
\be{data}
g_j(p) = \psi (\omega_j )\RR f(\omega_j,p).
\ee
 For simplicity, we assume that $\psi$ is even. In fact, since   $\RR$ is even, we can always symmetrize $\psi \RR f$, so this assumption is not restrictive. Then we replace $\RR f$ above by  $\psi \RR f$. 

\subsection{The asymptotic approach} 

In \cite{S-Sampling}, we propose an asymptotic point of view. We give more details in appendix~\ref{sec_appendix}. Say that the sampling rates are proportional to a small parameter $h>0$, and we want to understand the asymptotic behavior as $h\to0$. In other words, the sampling rates are $hs_\varphi$ and $hs_p$, and we call $s_\varphi$ and $s_p$ \textit{relative}  sampling rates. Ignoring possible offsets relative to the origin, we can assume 
\be{phi}
\varphi_i = i h s_\varphi, \quad p_j = jhs_p,\quad  \text{and denote $\omega_i=\omega(\varphi_i)$}. 
\ee
Since $\varphi$ parameterizes the unit circle, one has to worry about periodicity or not of the sequence $\varphi_j$.  
We assume:

\begin{assumption}\label{assumption}
The number $\pi/hs_\varphi =:m$ is an integer.
\end{assumption}

This restricts $h$ to the set $h\in \{\pi/s_\varphi m;\;m\in\mathbb{N}\}$ for any $s_\varphi>0$ fixed. Then the number of the distinct $\omega_j$ is equal to $2m$. That set is even, and since $\omega_j$ and $-\omega_j$ define the same families of lines parallel to each one of those directions, we actually have $m$ distinct families of parallel lines. Next, $\RR f$ is even, and so is $\psi$, so we can work with half of those $\omega_j$'s (so that adding the opposite ones completes the whole set), as it is usually done. 

Part of our analysis is not asymptotic, then one can take $h$ fixed, say $h=1$. Then $\pi/s_\varphi=m$ is an integer. When we do an asymptotic analysis, we take $h\to0$, which is to say that $m$ is a large parameter. 

In the asymptotic part, we work with functions $f(x)$ depending on $h$ as well, \sc ly band limited in the ball $|\xi|\le B$, see  Appendix. To motivate the interest in such functions, fix a function $\psi$ so that $\hat\psi\in C_0^\infty(\R)$, and set $\psi_h=h^{-1}\psi(\cdot/h)$. In practice, $\hat\psi$ may not be of compact support but can decay fast enough to be considered such with a small error. The locally averaged measurements are then modeled by $\psi_h *_p \mathcal R_\kappa$, where $*_p$ is the convolution with respect to the $p$ variable.  Egorov's theorem implies  $\psi_h *_p \mathcal R f(x) =\mathcal{R}Q_h f +O(h^\infty)$, where $Q_h$ is an \HPDO\ away from $\xi=0$ with principal symbol $\psi_h(|\xi|)$. This can be made more explicit with the use of the well-known intertwining property of the Radon transform. This observation has two implications: (1) if $f$ is, say, $L^\infty$ only, and independent of $h$, then $\tilde f_h:= Q_h f$ is $h$-dependent and \sc ly band limited; and (2) we can replace averaged measurements near a discrete set of points by pointwise measurements of $\mathcal{R}\tilde f$.

\section{The direct method, classical (non-asymptotic) view} \label{sec_3}
The method we call ``direct'' consists of the following. We take $h=1$ in this section. The asymptotic analysis as $h\to0$ will be done in the next one. Also, the step size $s_\varphi$ is simply denoted by $s$, so $\varphi_j = sj=\pi j/m$. 
Given the discrete data $\RR f(\omega_j,p)$, we compute $\mathcal{H}\RR f(\omega_j,p)$ as before, which is an operation in the $p$ variable for any $\omega$ fixed. If we knew  $\mathcal{H}\RR f$ for all $\omega$ (and $p$, of course), the inversion would have been 
\be{D1}
f(x) = \int_{S^1} \mathcal{H}\RR f(\omega , x\cdot\omega )\, \d\omega,
\ee
which is just \r{FBP}.  Instead, we perform numerical integration with the given 
\begin{figure}[h!] 
\includegraphics[height=.16\textheight, trim={0 25 0 20},clip]{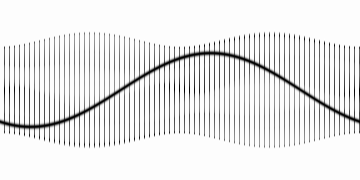}
\caption{The sinogram of the Shepp-Logan phantom with step size $5^\circ$. One first applies $\mathcal{H}$ in the $p$ variable, and then for each $x$, one computes the integral  $\int \mathcal{H}\RR f(\varphi, x\cdot\omega(\varphi))\, \d\varphi$ numerically by summing up the values at each angle along the curve representing the lines through $x$.}\label{fig-SL}
\end{figure}
samples by summing up over $\omega_j$ (and multiplying by the step size $s=\pi/m $) to get
\be{f_delta}
f_\delta(x) := \frac{2\pi}{m}\sum_{j=1}^{m} \mathcal{H}\RR f(\omega_j, x\cdot\omega_j). 
\ee
Note that first, we would get a sum from $j=1$ to $2m$ with the coefficient $\pi/m$ in front. Since $\mathcal{H}\RR f$ is even, by Assumption~\ref{assumption}, we can reduce the summation as indicated and multiply by $2$. 
 This is what \texttt{iradon} in the current version of   MATLAB does, for example.

Consider limited angle data $g= \psi\RR f$ now. 
If we had the non-discretized data, the natural inversion would have been 
\be{D1p}
f_\psi(x) : =  \int_{S^1} \mathcal{H} g(\omega,x\cdot\omega) \,\d\omega=  \int_{S^1} \psi(\omega)\mathcal{H} \RR f(\omega , x\cdot\omega )\, \d\omega,
\ee
(note that $\mathcal{H}$ commutes with $\psi$), 
and with discrete data, we do numerical integration 
\be{f_delta_d}
f_{\psi,\delta} (x) := s \sum_j (\mathcal{H}g_j)(x\cdot\omega_j)   = \frac{2\pi}{m} \sum_{j=1}^m \psi (\omega_j) (\mathcal{H}\RR f)(\omega_j, x\cdot\omega_j)
\ee
as in \r{f_delta} instead. The subscript $\delta$ can be explained by formula \r{P2d} below. 
We are not claiming that \r{D1p} is the ``best'' inversion with limited angle data; in fact this is a problem with a lot of proposed ``solutions'' (and without a unique solution \cite{Helgason-Radon}). It recovers the singularities stably recoverable from the data however. It follows from \cite{S-identification}, for example, that
\be{D1pa}
f_\psi =  \psi (D/|D|)f.
\ee

The following theorem follows easily from the calculus of wave front sets and the explicit form of the canonical relation of $\RR$.

\begin{theorem}\label{thm_WF}
Let $f\in \mathcal{E}'(\R^2)$. Then $\WF(f_{\psi,\delta})$ is included in the conormals of all lines $\{x;\; (x-x_0)\cdot\omega_j=0\}$ whenever $(x_0,\omega_j)\in \WF(f)$ for some $j$.
\end{theorem}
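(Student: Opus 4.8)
\noindent\emph{Proof proposal.} The plan is to decompose the discrete reconstruction into a finite sum of single--angle operators and run the wavefront set calculus on each summand. For each $j$ let $R_jf(p):=\RR f(\omega_j,p)$ be the single--angle Radon transform (the restriction of $\RR f$ to $\varphi=\varphi_j$, equivalently the pushforward of $f$ along the submersion $q_j\colon\R^2\to\R$, $q_j(x)=x\cdot\omega_j$; this pushforward is well defined since $\supp f$ is compact), and let $R_j'h(x):=h(x\cdot\omega_j)$ be the single--angle backprojection, i.e.\ the pullback along $q_j$. Then \r{f_delta_d} reads
\[
f_{\psi,\delta}=\frac{2\pi}{m}\sum_{j=1}^{m}\psi(\omega_j)\,R_j'\,\mathcal{H}\,R_j f,
\]
and since $\WF$ of a finite sum is contained in the union of the $\WF$'s, it suffices to bound $\WF(R_j'\mathcal{H}R_jf)$ for each $j$; the terms with $\psi(\omega_j)=0$ drop out.

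Next I would push the wavefront set through the three factors using the explicit canonical relation of $\RR$. Restricting that relation to $\varphi=\varphi_j$ (which is microlocally legitimate because $\WF(\RR f)$ contains no covectors of the form $(\varphi,p,\hat\varphi,0)$) gives
\[
\WF(R_jf)\subset\big\{(p_0,\tau)\ :\ (x_0,\tau\omega_j)\in\WF(f)\ \text{for some}\ x_0\ \text{with}\ x_0\cdot\omega_j=p_0\big\};
\]
in words, $R_jf$ can be singular at $p_0$ in codirection $\tau$ only if $f$ has a singularity conormal to the line $\{x\cdot\omega_j=p_0\}$. The operator $\mathcal{H}=\tfrac1{4\pi}Hd_p=\tfrac1{4\pi}|D_p|$ is a classical pseudodifferential operator of order $1$ away from $\hat p=0$, hence does not enlarge wavefront sets: $\WF(\mathcal{H}R_jf)\subset\WF(R_jf)$. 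Finally the pullback rule for the submersion $q_j$ gives $\WF(R_j'h)\subset\{(x,\tau\omega_j)\ :\ (x\cdot\omega_j,\tau)\in\WF(h)\}$. Composing the three inclusions, any $(x,\xi)\in\WF(R_j'\mathcal{H}R_jf)$ has $\xi=\tau\omega_j$ with $(x_0,\tau\omega_j)\in\WF(f)$ for some $x_0$ satisfying $x_0\cdot\omega_j=x\cdot\omega_j$; equivalently, $\WF(R_j'\mathcal{H}R_jf)$ lies in the union, over all $x_0$ with $(x_0,\omega_j)\in\WF(f)$, of the conormal bundles of the lines $\{x:(x-x_0)\cdot\omega_j=0\}$. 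Taking the union over $j=1,\dots,m$ gives the theorem. (Note that a singularity $(x_0,\eta)\in\WF(f)$ with $\eta$ not parallel to $\omega_j$ contributes nothing to the $j$--th summand, which is why only the directions $\pm\omega_j$ actually present in $\WF(f)$ enter.)

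The step that deserves care --- and which I expect to be the only real obstacle --- is the legitimacy of these compositions at the microlocal level: the single--angle transform $R_j$ is a Fourier integral operator whose canonical relation is \emph{not} a local graph, since a whole line $\{x\cdot\omega_j=p\}$ is blown down to the point $p$ (and $R_j'$ blows it back up), so the convenient graph--composition theorems do not apply verbatim. Here, however, the compositions are clean: $R_j$ and $R_j'$ are pushforward and pullback along the \emph{same} smooth submersion $q_j$, the relevant canonical relations live over the single smooth hypersurface $\{p=x\cdot\omega_j\}$, and the covectors occurring in the wavefront sets of the corresponding Schwartz kernels never vanish, so H\"ormander's composition rule for wavefront sets applies with no exceptional directions; compact support of $f$ (hence of $\RR f(\omega_j,\cdot)$) disposes of all properness issues. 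Alternatively one can avoid FIO composition entirely: $R_j'\mathcal{H}R_j$ has Schwartz kernel $\kappa_j(x,y):=K_{\mathcal H}\big((x-y)\cdot\omega_j\big)$, where $K_{\mathcal H}$ is the convolution kernel of $\mathcal{H}$ (singular only at the origin), so $\WF(\kappa_j)$ is the conormal bundle of $\{(x-y)\cdot\omega_j=0\}$ in $\R^2\times\R^2$, and the general wavefront--set estimate for an operator with a prescribed kernel yields the same bound directly.
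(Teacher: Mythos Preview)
Your argument is correct, and it reaches the theorem by a genuinely different route than the paper. The paper keeps the two--dimensional data structure: it writes $f_{\psi,\delta}=\RR'\mathcal H\big[\sum_j\psi(\omega_j)\RR f(\omega_j,p)\,\delta(\omega-\omega_j)\big]$, invokes H\"ormander's product rule for $\WF$ to handle the multiplication of $\RR f$ by $\delta(\omega-\omega_j)$ (the product is legitimate precisely because $\WF(\RR f)$ avoids the conormals $\hat p=0$, the same observation you use to justify the restriction), and then pushes the resulting wave front set through the canonical relation of the full FIO $\RR'$. You instead collapse each summand to a one--dimensional problem: pushforward and pullback along the single submersion $q_j(x)=x\cdot\omega_j$, with the $\Psi$DO $\mathcal H$ in between. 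Your approach is more elementary in that it needs only the standard submersion pushforward/pullback rules (or, in your alternative, the kernel estimate for $\kappa_j(x,y)=K_{\mathcal H}((x-y)\cdot\omega_j)$) rather than the FIO calculus for $\RR'$, and it makes the constraint $\xi\parallel\omega_j$ appear immediately. The paper's packaging, on the other hand, has the virtue of treating the discrete reconstruction as a special case (with $\chi=\delta$) of the interpolation formula \r{P2}, so the same framework covers both methods; this is lost in your summand--by--summand reduction. Either way, the conclusion and the essential microlocal input (no $\hat p=0$ covectors in $\WF(\RR f)$) are the same.
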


In other words,  all singularities of $f_{\psi,\delta}$ are included in the following set: for every $(x_0,\omega_j)\in \WF(f)$ we take the  conormals to the line through $x_0$ and conormal to $\omega_j$. The examples we present below show that in some cases, this inclusion is actually an equality.   

\begin{proof}[Proof of Theorem~\ref{thm_WF}]
Set 
\be{P2d}
(\psi \RR f)_{{\rm int},\delta}(\omega,p) = \frac{2\pi}{m}\sum_{j=1}^m \psi( \omega_j  )\RR f(\omega_j ,p) \delta(\omega-\omega_j),
\ee
which is as \r{P2} but with $\chi=\delta$ there. The Riemann sum \r{f_delta_d} is an actual integral of $(\psi \RR f)_{{\rm int},\delta}(\omega, x\cdot\omega)$ in the $\omega$ variable, i.e.,  $f_{\psi,\delta} = R'(\psi \RR f)_{{\rm int},\delta}$. By the calculus of the wave front sets, see, e.g., \cite{Hormander1}, the wave front of the product $\RR f(\omega ,p) \delta(\omega-\omega_j)$ is the closure of the vector sum of the wave front set of each factor (the delta considered as a distribution w.r.t.\ $(\omega,p)$). Note that the product is well-defined since $\WF(\RR f)$ is separated from  the conormals $\lambda \d \varphi$ for $f$ compactly supported, as it follows easily from the expression for the canonical relation of $\RR$, see, e.g., \cite{S-Sampling}. Then the closure of that vector sum is the whole $\R^2\setminus 0$ over every point where $\RR(\omega_j,p)$ is singular. Then all those covectors over any such point would be mapped to singularities conormal to the line $\omega_j\cdot x=p$ by the inverse canonical relation. 
\end{proof}

Even if $f$ is piecewise smooth, $f_{\psi,\delta}$ might be a distribution, not a function, see section~\ref{sec-classical}.

\begin{theorem}\label{thm_cl}
Assume $f\in\mathcal{S}(\R^2)$, and let $\psi\in C^\infty(S^1)$ be even. 
Let Assumption~\ref{assumption} hold. Let $f_{\psi,\delta}$, given by \r{f_delta_d}, be the reconstructed $f$ with discrete limited angle data $g_j$ given by \r{data}.  Then

(a) $f \mapsto f_{\psi,\delta}$ is the Fourier multiplier
\be{S7}
\hat f_{\psi,\delta}(\xi) = \frac{\pi}{m} \sum_{j=1}^m \psi(\omega_j)  \delta\left(\xi\cdot\omega_j^\perp\right)  |\xi|  \hat  f (\xi).
\ee
Also, %
\be{S7'}
\hat f_{\psi,\delta}(\xi) = \frac{\pi}{m} \sum_{j=1}^m  \delta\left(\xi\cdot\omega_j^\perp\right)  |\xi|  \Big[\psi(\xi/|\xi|) \hat  f (\xi)\Big]
\ee
thus, $f_{\psi,\delta}$ is a linear operator applied to $f_\psi$, which is a Fourier multiplier as well. 

(b) We also have
\be{S8}
f_{\psi,\delta} = f_\psi + \sum_{k=1}^\infty G_k f_\psi,
\ee
where $f_\psi$ is as in \r{D1pa}, and  $G_k$ are the Fourier multipliers
\be{S8a}
G_k : \hat f(\xi)  \longmapsto 
2  \cos\big( 2m k\arg(\xi) \big)  \, \hat f(\xi).
\ee
\end{theorem}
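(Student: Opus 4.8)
The starting point is the Fourier-slice theorem: $\widehat{\RR f}(\omega,\hat p) = \hat f(\hat p\,\omega)$, where the hat on the left is the one-dimensional Fourier transform in $p$. Since $\mathcal H = \frac1{4\pi}H d_p = \frac1{4\pi}|D_p|$, we get $\widehat{\mathcal H\RR f}(\omega,\hat p) = \frac1{4\pi}|\hat p|\,\hat f(\hat p\,\omega)$. I would then write $\mathcal H\RR f(\omega_j, x\cdot\omega_j)$ as an inverse Fourier integral in the $p$ variable evaluated at $p = x\cdot\omega_j$, namely $\mathcal H\RR f(\omega_j,x\cdot\omega_j) = \frac1{8\pi^2}\int_\R e^{\i (x\cdot\omega_j)\hat p}|\hat p|\,\hat f(\hat p\,\omega_j)\,\d\hat p$. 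Plugging this into the definition \r{f_delta_d} of $f_{\psi,\delta}$ and substituting $\xi = \hat p\,\omega_j$ (so that for fixed $j$, $\hat p$ ranges over the line $\R\omega_j$), the radial integral becomes an integral over that line. Writing it as a two-dimensional integral against a delta on the orthogonal line — i.e.\ using $\int_{\R\omega_j} G(\xi)\,\d\xi = \int_{\R^2} G(\xi)\,\delta(\xi\cdot\omega_j^\perp)\,\d\xi$ with the appropriate normalization since $\omega_j$ is a unit vector — and recognizing the resulting $x$-integral as an inverse Fourier transform in two dimensions, yields exactly \r{S7}. Formula \r{S7'} is then immediate because $\psi(\omega_j) = \psi(\xi/|\xi|)$ on the support of $\delta(\xi\cdot\omega_j^\perp)$ (as $\xi$ is then parallel to $\omega_j$), and $\psi(\xi/|\xi|)\hat f(\xi) = \widehat{f_\psi}(\xi)$ by \r{D1pa}; this also makes the final sentence of part (a) clear.

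For part (b) the idea is to recognize the sum over $j$ in \r{S7} as a trigonometric (Dirichlet-kernel type) sum in the angular variable and expand it in its Fourier series. Fix $\xi\ne 0$ and write $\xi = |\xi|\omega(\alpha)$ with $\alpha = \arg(\xi)$. Then $\xi\cdot\omega_j^\perp = |\xi|\sin(\alpha - \varphi_j)$ with $\varphi_j = \pi j/m$, so $\delta(\xi\cdot\omega_j^\perp) = \frac1{|\xi|}\sum_{\text{zeros}}\delta(\alpha-\varphi_j \ \mathrm{mod}\ \pi)\big/|\cos(\alpha-\varphi_j)|$ — more precisely, summing over $j$ and rescaling, $\frac{\pi}{m}\sum_{j=1}^m \delta(\xi\cdot\omega_j^\perp)|\xi| = \frac{\pi}{m}\sum_{j=1}^m \delta(|\xi|\sin(\alpha-\varphi_j))|\xi|$, and one evaluates this as a distribution/multiplier in $\alpha$. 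The clean way is to treat the whole map $\hat f \mapsto \hat f_{\psi,\delta}$ as a convolution operator on the circle $S^1$ (in the $\alpha$ variable, with $\hat f$ restricted to a circle of fixed radius and then $|\xi|$ and $\psi$ pulled out): the kernel is the $\pi$-periodic comb $\frac{\pi}{m}\sum_j \delta(\alpha-\varphi_j)$, whose Fourier coefficients are $1$ at frequencies $\equiv 0 \ \mathrm{mod}\ 2m$ and $0$ otherwise (Poisson summation / the fact that $\frac{1}{m}\sum_{j=1}^m e^{\i n\pi j/m}$ is $1$ if $2m\mid n$ and $0$ otherwise). Comparing with the identity operator (Fourier coefficient $1$ at frequency $0$), the extra terms are precisely the frequencies $2mk$, $k\ne 0$, each with coefficient $1$; pairing $k$ with $-k$ gives the real multiplier $2\cos(2mk\arg\xi)$, which is \r{S8a}, and acting on $f_\psi$ rather than $f$ accounts for the $\psi$ factor, giving \r{S8}.

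The main technical care — and the step I expect to be the real obstacle — is handling the angular delta comb rigorously as a Fourier multiplier: $\delta(\xi\cdot\omega_j^\perp)$ is a distribution in $\xi$ that is homogeneous of degree $-1$ and supported on a line, so \r{S7} must be interpreted correctly (e.g.\ as a conic distribution, or by testing against $\hat f$ and using that $\hat f$ is Schwartz so all pairings converge), and the interchange of the finite $j$-sum with the Fourier integrals must be justified. Once one commits to viewing the operator as "restrict $\hat f$ to each radius, convolve on the circle with a known finite measure, multiply by $|\xi|$ and by $\psi$", the Fourier-series computation of the comb's coefficients is routine and Assumption~\ref{assumption} (that $m = \pi/(hs_\varphi)$ is an integer, here with $h=1$) is exactly what makes the comb genuinely $\pi$-periodic with $2m$ teeth, so that the surviving frequencies are the multiples of $2m$. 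I would also double-check the constant: the factor $\frac{\pi}{m}$ in \r{S7} versus $\frac{2\pi}{m}$ in \r{f_delta_d} is accounted for by the reduction from $2m$ to $m$ terms using evenness of $\psi$ and of $|\hat p|\hat f(\hat p\omega)$ under $(\omega,\hat p)\mapsto(-\omega,-\hat p)$, together with the $(2\pi)^{-d}$ normalization of the Fourier transform; getting this bookkeeping exactly right is the one place where a sign or factor of $2$ could slip in.
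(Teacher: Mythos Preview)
Your plan is correct. For part~(a) it is essentially the paper's argument: Fourier slice theorem, then $\mathcal H\RR f(\omega_j,x\cdot\omega_j)=\frac{1}{8\pi^2}\int e^{\i(x\cdot\omega_j)\hat p}|\hat p|\hat f(\hat p\omega_j)\,\d\hat p$, then convert the line integral to a two-dimensional one against $\delta(\xi\cdot\omega_j^\perp)$. The paper takes a slightly more roundabout path (it first produces $\delta(\xi-\hat p\omega_j)$, splits $\hat p$ into positive and negative parts to extend the sum from $m$ to $2m$ terms, and passes through a test function $\rho$ before arriving at \r{S7}), but the substance is the same, including the observation that $\psi(\omega_j)=\psi(\pm\xi/|\xi|)$ on $\supp\delta(\xi\cdot\omega_j^\perp)$ together with evenness of $\psi$ gives \r{S7'}.

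For part~(b) your route differs from the paper's. The paper does \emph{not} start from \r{S7}; instead it applies the Poisson summation formula directly to the Riemann sum \r{f_delta_d} in the $\varphi$ variable, obtaining $f_{\psi,\delta}=\sum_k f_{\psi,\delta}^{(k)}$ with $f_{\psi,\delta}^{(k)}$ the Fourier coefficient of $\varphi\mapsto(\psi\mathcal H\RR f)(\omega(\varphi),x\cdot\omega(\varphi))$ at frequency $2mk$, and then computes each coefficient explicitly via the integral representation, splitting $\hat p>0$ and $\hat p<0$ and pairing $\pm k$ to produce the cosine. Your approach---expand the angular comb multiplier $\frac{\pi}{m}\sum_j|\xi|\,\delta(\xi\cdot\omega_j^\perp)=\sum_{k\in\mathbb Z}e^{\i 2mk\arg\xi}$ in its Fourier series on $S^1$ and read off \r{S8} directly---is the dual computation and is arguably cleaner once \r{S7} is in hand; the paper itself remarks afterward that \r{S7} and \r{S8} are linked by Poisson summation, which is exactly what your argument makes explicit. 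The paper's route has the side benefit of producing closed integral formulas for the individual aliased components $f_{\psi,\delta}^{(k)}$, which it reuses in the later semiclassical analysis; your route isolates the mechanism (angular convolution with a finite Dirac comb) more transparently and makes the role of Assumption~\ref{assumption} immediate.
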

\begin{proof}
By the Fourier Slice Theorem,
\[
\mathcal{F}_{p\to\hat p} \RR f(\omega,p) = \int e^{-\i \hat p p} \RR f(\omega ,p) \, \d p = \int e^{-\i \hat p y\cdot\omega} f(y)\,\d y = \hat f(\hat p\omega). 
\]
Then
\[
\mathcal{F}_{p\to\hat p} \psi \mathcal{H}\RR f(\omega ,p) = \frac1{4\pi}|\hat p| \psi(\omega) \hat f(\hat p\omega).
\]
Therefore,
\[
 \psi(\omega)\mathcal{H}  \RR f(\omega ,p) = \frac1{8\pi^2} \psi(\omega)\int e^{\i \hat p p} |\hat p|\hat f(\hat p\omega)\,\d \hat p.
\]
Hence,
\be{S6a}
(\psi   \mathcal{H} \RR f)(\omega ,x\cdot\omega) = \frac1{8\pi^2}\psi(\omega)  \int e^{\i \hat p x\cdot\omega} |\hat p|  \hat f(\hat p\omega) \d \hat p.
\ee

We apply the definition \r{f_delta_d} of $f_{\psi,\delta}$ now: we discretize \r{S6a} to plug it in \r{f_delta_d}, and then take the Fourier transform:
\[
\begin{split}
\hat f_{\psi, \delta}(\xi) &= \frac{1}{4\pi m}\sum_{j=1}^m \psi(\omega_j)  \iint e^{-\i  x\cdot\xi+ \i \hat p x \cdot\omega_j} |\hat p|  \hat f(\hat p\omega_j) \,\d \hat p\,\d x\\ 
&= \frac{\pi}{m}\sum_{j=1}^m \psi(\omega_j)  \int  \delta(\xi-\hat p\omega_j )|\hat p| \hat f(\hat p\omega_j)  \,\d \hat p. 
\end{split}
\]
Split the integral above into one over $[0,\infty)$ and the other one over $[-\infty,0]$, make the change of variables $\hat p\mapsto -\hat p$, and shift the index $j$ so that  $\{\omega_j\}$ gets multiplied by $-1$ after this. As a result, $\{\omega_j\}_{j=m+1}^{2m}$ are brought up into the sum, and in the second integral, $\hat p\in [0,\infty)$. 
In other words, we can extend the summation to $j=1,\dots,2m$ but restrict the integration to $\hat p>0$ only above. 

Given a test function $\rho$, we have
\[
\begin{split}
\langle \hat f_{\psi, \delta}, \rho\rangle &=  \frac{\pi}{m} \sum_{j=1}^{2m} \psi(\omega_j) \int_0^\infty |\hat p| \hat f(\hat p\omega_j)\rho(\hat p\omega_j )  \,\d \hat p\\
& = \frac{\pi}{2m} \sum_{j=1}^{2m} \int_{S^1} \int_0^\infty\hat f(\hat p\omega)\rho(\hat p\omega ) \delta(\omega\cdot\omega_j^\perp)  |\hat p| \,\d \hat p\,\d\omega\\
& = \frac{\pi}{2m} \sum_{j=1}^{2m}\psi(\omega_j)\int  \hat f(\xi)\rho(\xi ) \delta((\xi/|\xi|) \cdot\omega_j^\perp)    \,\d \xi.
\end{split}
\]
Therefore, 
\[
\hat f_{\psi, \delta}(\xi)= \frac{\pi}{2m}\sum_{j=1}^{2m} \psi(\omega_j) |\xi| \delta\big(\xi\cdot \omega_j^\perp\big)      \hat f(\xi),
\]
which can be written as \r{S7} as well. 
To gain  reader's confidence about this computation, assume $\psi=1$, and note that as $s=\pi/m\to 0$,  the number of samples $2 m$ on the circle is increasing, and the formula above converges to  $\frac12\int |\xi|\delta(\xi\cdot\omega^\perp)\,\d\omega=1$, multiplied by $\hat f(\xi)$, as one would expect. This  proves  \r{S7} in (a). Next, note that on the support of $\delta(\xi\cdot\omega_j^\perp)$, we have $\omega_j=\pm \xi/|\xi|$. Since $\psi$ is even, this proves \r{S7'}, and completes the proof of (a). 

To prove (b), first we want to connect the actual $\omega$ integral in \r{D1p} with its Riemann sum in \r{f_delta_d} in a Fourier transform kind of way. 
We consider the one-dimensional version first. The Poisson summation formula implies, say for $\rho$ in the Schwartz class,
\be{S4}
s \sum_{k\in \mathbb Z} \rho(sk) =  \sum_{k\in \mathbb Z} \hat \rho(2\pi k/s),
\ee
where $s>0$ is fixed, and in the sequel, $s=s_\varphi = \pi/m$, as above.  
Note that if $\rho$ is a  \textit{classically} band-limited function with frequencies in $[-B,B]$, then if $\pi/s>B$ (the Nyquist condition), only the $k=0$ term on the right in \r{S4} would be possibly different than zero. Then \r{S4} can be interpreted as saying that the Riemann sum on the left, approximating $\int \rho(x)\,\d x = \hat \rho(0)$ is exact for such functions.  When the Nyquist condition is not satisfied, then \r{S4} is exact for the integral of the aliased reconstruction of $\rho$, and the 
 $k\not=0$ terms on the right represent corrections coming from the aliased components. 
 
 In our case $\rho$ is a $2\pi$ periodic function, and we sum over $k\in \mathbb{Z}/2m\mathbb{Z}$ 
(say, over $k\in \{-m+1,\dots,m\}$). We can view $\rho$ as supported on $[0,2\pi]$, then extended as zero outside that interval for the purpose of the summation. Then in \r{S4}, $\hat\rho$ is evaluated at $2mk$, i.e., those are just Fourier coefficients of $\rho$. With this in mind, we write
\be{S4a}
\frac{\pi}m \sum_{k\in \mathbb{Z}/2m\mathbb{Z} } \rho(\pi k/m) =  \sum_{k\in \mathbb{Z} } \hat \rho(2mk).
\ee
 
We  apply \r{S4a}  to \r{f_delta_d} with $\rho$ being the function  $\varphi \mapsto (\psi \mathcal{H}\RR f)(\omega(\varphi), x\cdot\omega(\varphi))$. We get
\be{S5}
f_{\psi, \delta}  = \sum_k f_{\chi, \delta}^{(k)}, \quad f_{\chi, \delta}^{(k)}:= \mathcal{F}_{\varphi\to\hat\varphi} (\psi  \mathcal{H}\RR f)(\omega(\varphi), x\cdot\omega(\varphi))\big|_{\hat\varphi=2mk}.
\ee

By  \r{S6a}, each aliased component in \r{S5} is then given by
\[
f_{\psi, \delta}^{(k)} (x) = \frac1{8\pi^2}\iint \psi(\omega(\varphi)) e^{-\i 2 m k\varphi }  e^{\i \hat p x \cdot\omega(\varphi)} |\hat p| \hat f(\hat p\omega(\varphi))\, \d \hat p\,\d\varphi.
\]
We split the $\hat p$  integration in two parts: over $\hat p>0$ and over $\hat p<0$. In the second one, we make the change $(\varphi,\hat p)\mapsto (\varphi+\pi, -\hat p)$ to get
\[
\begin{split}
f_{\psi, \delta}^{(k)}(x)  & = \frac1{8\pi^2 }\int  e^{ \i  x\cdot\xi}\psi(\xi/ |\xi| ) e^{-\i 2m k\arg(\xi) }   \hat f(\xi) \,\d \xi\\
&\quad  + \frac1{8\pi^2 }\int  e^{ \i x \cdot\xi} \psi(-\xi/|\xi|) e^{-\i 2m k\arg(-\xi) }  \hat  f(\xi) \,\d \xi\\
& = \frac1{4\pi^2 }\int  e^{ \i  x\cdot\xi}\psi(\xi/ |\xi| ) e^{-\i 2m k\arg(\xi) }   \hat f(\xi) \,\d \xi
\end{split}
\]
Then
\be{S6b}
f_{\psi, \delta}^{(k)} (x) + f_{\psi, \delta}^{(-k)} (x)  = \frac1{(2\pi)^2 }\int e^{ \i x\cdot\xi}\psi(\xi/|\xi|)2 \cos( 2 m k\arg(\xi) )  \hat  f(\xi)\,\d \xi.
\ee
This completes the proof of (b). 
\end{proof}

\begin{remark} \label{remark1}  \ 

(a) By \r{S7}, the map $f_\psi \mapsto f_{\psi,\delta}$ is a formal \PDO\ but with a singular symbol. Such operators are studied in  \cite{Melrose-Uhlmann-79, Guillemin-Uhlmann-81, Antoniano-Uhlmann}. 
This allows for a point of view more general than that of Theorem~\ref{thm_WF} and that in section~\ref{sec-classical}. 
Also, \r{S7} can be considered as $\RR' \RR_b$ with $b$ a singular weight, and the formula is the same when $b$ is smooth, see the appendix in \cite{S-identification}.

(b) 
Each $G_k$ in \r{S8} is a \PDO\ of order zero, and as such, it does not add additional singularities. The infinite sum however, may, in general, as it is seen from \r{S7}, see also section~\ref{sec-classical}.

(c) 
When we view $G_k$ asymptotically, as $m\to\infty$, then $G_k$ are interpreted as \sc\ FIOs, when $m$ is considered as a large parameter; which add, and also displace (\sc) singularities. 


(d) Formulas \r{S7} and \r{S8} can be obtained from each other by the Poisson summation formula, as the proof shows. 
\end{remark}

\begin{remark}
The operator $G_k$ is a convolution with 
\[
C_{mk} |x|^{-2}\cos(2mk\arg(x)), 
\]
see \cite{Lemoine}. The singularity at $x=0$ is in principal value sense  since the cosine function there has a zero mean value over the unit circle. 
\end{remark}

\section{The direct method, an asymptotic view} 
\subsection{The aliasing as a \sc\ FIO}
We take the asymptotic view now: the angular step size is $sh$ now with $s>0$ fixed and $h\to0+$. As in Theorem~\ref{thm_cl}(b) above, according to Assumption~\ref{assumption}, we assume that $sh=\pi/m$ with $m\in\mathbb N$. Then $h\in\{ \pi/ms;\; m\in\mathbb{N}\}$ when $s>0$ is fixed, and our analysis is asymptotic, as $m\to\infty$. Formula \r{f_delta_d} takes the form
\be{f_delta_d'}
f_{\psi,\delta} (x) := 2sh \sum_{j=1}^{\pi/hs} \psi(\omega_j) (\mathcal{H}\RR f)(\omega_j, x\cdot\omega_j), \quad \omega_j = \omega(shj). 
\ee
The function $f$ is assumed to be $h$-dependent, and \sc ly band limited, say with $\WFH(f)\subset \mathcal{B}(0,R)\times \mathcal{B}(0,B)$.  Theorem~\ref{thm_cl} still holds but we replace $\hat f$ now by its \sc\ version $\mathcal{F}_hf(\xi) = \hat f(\xi/h)$. The relevant part is (b) in this case. Then, with $f_\psi$ still given by \r{D1pa}, it follows from \r{S8a} that
\be{DA1}
G_k : \mathcal{F}_h f(\xi)  \longmapsto 
 2  \cos\frac{ 2\pi k\arg(\xi)}{sh}   \mathcal{F}_h f(\xi).
\ee
It is convenient to write the cosine function as a sum of complex exponentials, the way we derived it in the first place:
\be{DA2}
G_k = \mathcal{A}_k+ \mathcal{A}_{-k}, \quad \mathcal{A}_{\pm k}: \mathcal{F}_h f(\xi)  \longmapsto  e^{ \pm 2\pi\i  k\arg(\xi)/sh} \mathcal{F}_h f(\xi) , \quad k=1,2,\dots. 
\ee
We get the sum of two unitary FIOs away from the zero section. The phase functions are $ \Phi_\pm(x,y,\xi) = \pm 2\pi k\arg(\xi)/s  + (x-y)\cdot\xi $. The characteristic variety $\Sigma:= \{\Phi_\xi=0\}$ given by 
\[
y=x\pm \frac{2\pi k}{s} \xi^\perp/|\xi|^2.
\]
Then  $(x,\Phi_x)\mapsto (y,-\Phi_y)$ on $\Sigma$ is actually a graph, so we get that the canonical relations $C_k$ of $\mathcal{A}_k$, where $k$ can be negative as well, are
\be{CR}
C_{k} :(x,\xi) \mapsto \Big(x+ \frac{2\pi k}  {s|\xi|} \frac{\xi^\perp}{|\xi|},\xi\Big).
\ee
This leads to the following. 

\begin{theorem}\label{thm_1}
Let $f_h$ be \sc ly band limited. 
Then, given its discretized localized Radon transform $g_j$ as in \r{data} at $\varphi_j=shj$, the reconstructed $f_{\psi, \delta}$ by \r{f_delta_d} has the form
\be{thm_1_eq1}
f_{\psi, \delta} = f_\psi+ \sum_{k=-\infty, \,k\not=0}^\infty \mathcal{A}_kf_\psi, 
\ee
where $\mathcal{A}_k$ are the Fourier multipliers given by \r{DA2}. 
Also, $\mathcal{A}_k$ are unitary, and  away from the zero section, they are elliptic \sc\ FIOs of order zero  with canonical relations $C_{k}$ given by  \r{CR}.
\end{theorem}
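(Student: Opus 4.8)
The plan is to assemble Theorem~\ref{thm_1} from pieces already in hand, treating \r{thm_1_eq1} as essentially a restatement of Theorem~\ref{thm_cl}(b) in the semiclassical setting, and then establishing the three claimed properties of $\mathcal{A}_k$ (unitarity, ellipticity and order zero, and the canonical relation) by direct inspection of the Fourier-multiplier formula \r{DA2}. First I would note that Theorem~\ref{thm_cl}(b), as the excerpt observes, holds verbatim with $\hat f$ replaced by $\mathcal{F}_hf$, so the decomposition $f_{\psi,\delta}=f_\psi+\sum_{k\neq 0}\mathcal{A}_kf_\psi$ with $G_k=\mathcal{A}_k+\mathcal{A}_{-k}$ is immediate from \r{S8}, \r{S8a}, \r{DA1}, \r{DA2}; the only point to check is that regrouping the series $\sum_{k=1}^\infty G_k f_\psi$ as $\sum_{k\neq 0}\mathcal{A}_k f_\psi$ is legitimate, which follows because each $\mathcal{A}_k$ acts on $\mathcal{F}_h f$, which is supported in $|\xi|\le Bh$ uniformly, so the operators are uniformly bounded and the sum is interpreted in the same (weak/distributional) sense as in Theorem~\ref{thm_cl}.

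Next I would verify unitarity: $\mathcal{A}_{\pm k}$ is multiplication on the Fourier side by $e^{\pm 2\pi\i k\arg(\xi)/sh}$, a function of modulus $1$ defined away from $\xi=0$, so by Plancherel it is an isometry on $L^2$; since $\mathcal{A}_k\mathcal{A}_{-k}=\mathrm{Id}$ away from the zero section, it is unitary there. For the FIO claim I would write $\mathcal{A}_k$ as an oscillatory integral
\[
\mathcal{A}_k u(x) = \frac{1}{(2\pi h)^2}\iint e^{\i\Phi_k(x,y,\xi)/h} a(\xi)\, u(y)\,\d y\,\d\xi,
\]
with phase $\Phi_k(x,y,\xi)=(x-y)\cdot\xi + \tfrac{2\pi k}{s}\arg(\xi)$ (here $sh\arg\xi$ in the exponent $2\pi k\arg(\xi)/(sh)$ is rewritten as $(2\pi k/s)\arg(\xi)$ divided by $h$) and amplitude $a\equiv 1$, which is elliptic of order zero; the cutoff away from $\xi=0$ is harmless on the semiclassical wavefront set of $f$. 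I would then compute the critical set $\Sigma=\{\p_\xi\Phi_k=0\}$, giving $y=x+\tfrac{2\pi k}{s}\,\xi^\perp/|\xi|^2$ as stated, check that $\p_x\Phi_k=\xi$ and $-\p_y\Phi_k=\xi$, and read off that $(x,\p_x\Phi_k)\mapsto(y,-\p_y\Phi_k)$ is the graph of the map $C_k$ in \r{CR}. The nondegeneracy of the phase (so that $\mathcal{A}_k$ is a genuine FIO and not merely a formal one) reduces to checking that the map $(x,y,\xi)\mapsto(x,\p_x\Phi_k,\p_\xi\Phi_k)$ has the right rank on $\Sigma$, which is a short computation since $\p_x\p_\xi\Phi_k=\mathrm{Id}$.

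I expect the main (though modest) obstacle to be bookkeeping around the singularity of $\arg(\xi)$ and the zero section: $\arg(\xi)$ is multivalued/undefined at $\xi=0$ and the symbol $e^{\pm 2\pi\i k\arg(\xi)/sh}$ does not extend across $\xi=0$, so one must be careful that all statements are made "away from the zero section" and that the semiclassical band-limitedness $\WFH(f)\subset\mathcal{B}(0,R)\times\mathcal{B}(0,B)$ is exactly what lets us microlocalize away from $\xi=0$ (and, for the support statement, that $f_\psi$ inherits this). A secondary point is simply confirming that the phase $\Phi_k$ is homogeneous of degree one in $\xi$ only up to the bounded term $(2\pi k/s)\arg(\xi)$, which is degree zero — this is why $\mathcal{A}_k$ is an FIO of order zero with a canonical transformation that is the identity in the $\xi$-variable and a $|\xi|^{-1}$-scaled shear in $x$; I would remark that this is consistent with $\mathcal{A}_k$ being unitary. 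Everything else is routine stationary-phase/FIO calculus, for which I would cite the standard references already invoked in the paper.
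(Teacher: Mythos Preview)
Your proposal is correct and follows essentially the same route as the paper. In the paper the ``proof'' of Theorem~\ref{thm_1} is the short discussion in \S4.1 immediately preceding the statement: it observes that Theorem~\ref{thm_cl}(b) carries over with $\hat f$ replaced by $\mathcal{F}_h f$, writes the phase $\Phi_\pm(x,y,\xi)=\pm 2\pi k\arg(\xi)/s+(x-y)\cdot\xi$, solves $\partial_\xi\Phi=0$ to get $y=x\pm\tfrac{2\pi k}{s}\xi^\perp/|\xi|^2$, and reads off that $(x,\Phi_x)\mapsto(y,-\Phi_y)$ is the graph \r{CR}. Your plan reproduces exactly this, adding a line for unitarity via Plancherel and an explicit nondegeneracy check ($\partial_x\partial_\xi\Phi_k=\mathrm{Id}$), both of which the paper leaves implicit.

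One small slip: you write that $\mathcal{F}_h f$ is ``supported in $|\xi|\le Bh$ uniformly''; in the semiclassical scaling the support is $|\xi|\le B$ (the $h$ is already absorbed into $\mathcal{F}_h$). This does not affect your argument. Also, regarding convergence of the sum, the paper does not rely on mere uniform boundedness of the $\mathcal{A}_k$ but rather on local finiteness (Remark~\ref{rem_4.2}): since $C_k$ shifts $x$ by at least $2\pi|k|/(sB)$, only finitely many terms contribute on any fixed compact set. You may want to cite that instead of the weak/distributional interpretation.
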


\begin{remark}\label{remark_4.1}
Theorem~\ref{thm_1} shows that while artifacts are always created, the original $f$ appears in the expansion as well. In this sense, no resolution has been lost!  That term $f$ could, in principle overlap or even be canceled by the artifacts of another singularity elsewhere.
To avoid aliasing artifacts in a fixed ball $\mathcal{B}(0,R)$, we need $2\pi /(s|\xi|)>2R$, i.e., $s<\pi/BR$. This is the same requirement we got in \cite{S-Sampling}, see also \r{F1}. This is formulated in Theorem~\ref{thm_art}(b) below.  
\end{remark}

\begin{remark}\label{rem_4.2}
 In particular, we get that the sum in \r{thm_1_eq1} is locally finite.  Indeed, since $|\xi|\le B$ by the assumption on $f$, we have $2\pi/s|\xi|\ge 2\pi/sB$, therefore $C_k$ shifts each $x$ at least at distance $2\pi k/sB$, and for $k\ge  k_0\gg1$. Then this would leave any fixed compact domain since $|x|\le R$, and that $k_0$ depends on $R$ and $B$ only. We also get a lower bound, $2\pi/sB$, of the distance of the artifacts to $x$. 
\end{remark}


\begin{theorem}\label{thm_art}
Assume  that $f=f_h$ is a \sc ly band limited function with $\WFH(f)\subset \mathcal{B}(0,R)\times\mathcal{B}(0,B) $ for some $R>0$, $B>0$. 
Then

(a) 
\[
\WFH (f_{\psi, \delta} )\setminus 0\subset \bigcup_{k\in \mathbb Z}C_k(\WFH(f_\psi )\setminus 0). 
\]

(b) If  $B<\pi/sR$, then $f_{\psi,\delta}= f_\psi + O_{H^s}(h^\infty)$ in $\mathcal{B}(0,R)$, $\forall s$.  

(c) Under the condition of (b), 
\be{art1}
\begin{split}
\WFH (f_{\psi, \delta} - f_\psi)\setminus 0 &= \bigcup_{k\in \mathbb Z\setminus 0}C_k(\WFH(f_\psi )\setminus 0).
\end{split}
\ee
\end{theorem}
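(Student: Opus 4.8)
\textbf{Proof plan for Theorem~\ref{thm_art}.} The plan is to read all three parts off the decomposition \r{thm_1_eq1} of Theorem~\ref{thm_1}, which I rewrite as $f_{\psi,\delta}=\sum_{k\in\mathbb Z}\mathcal A_kf_\psi$ with $\mathcal A_0=\Id$, using three facts: (i) for $k\neq0$ the $\mathcal A_k$ are elliptic, order--zero \sc\ FIOs whose canonical relations $C_k$ in \r{CR} are graphs of the diffeomorphism $(x,\xi)\mapsto(x+\tfrac{2\pi k}{s|\xi|}\tfrac{\xi^\perp}{|\xi|},\xi)$ of $T^*\R^2\setminus0$, with inverse $C_{-k}$, and correspondingly $\mathcal A_{-k}=\mathcal A_k^{-1}$ (see \r{DA2}); (ii) $f\mapsto f_\psi=\psi(D/|D|)f$ is the zeroth order \PDO\ \r{D1pa}, so that $\WFH(f_\psi)\setminus0\subset\WFH(f)\setminus0\subset\mathcal B(0,R)\times\mathcal B(0,B)$; and (iii) by Remark~\ref{rem_4.2} the sum is locally finite, i.e.\ over any fixed compact set in $x$ all but finitely many terms are $O(h^\infty)$. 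With this, (a) is immediate: an FIO whose canonical relation is the graph of $\chi$ pushes \sc\ wave front sets forward by $\chi$, so $\WFH(f_{\psi,\delta})\setminus0\subset\bigcup_k\WFH(\mathcal A_kf_\psi)\subset\bigcup_kC_k(\WFH(f_\psi)\setminus0)$, the union being a locally finite union of closed sets by (iii), hence closed.

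For (b) I would bound the displacement of $C_k$ directly. If $(x,\xi)\in\WFH(f_\psi)\setminus0$ then $|x|\le R$ and $|\xi|\le B$, so by \r{CR}, for every $k\neq0$,
\[
\Big|\,x+\tfrac{2\pi k}{s|\xi|}\tfrac{\xi^\perp}{|\xi|}\,\Big|\ \ge\ \tfrac{2\pi|k|}{s|\xi|}-|x|\ \ge\ \tfrac{2\pi}{sB}-R\ =:\ R+c,\qquad c:=\tfrac{2\pi}{sB}-2R,
\]
and the hypothesis $B<\pi/sR$ is precisely the statement $c>0$. Hence $C_k(\WFH(f_\psi)\setminus0)\subset\{|x|\ge R+c\}$ for all $k\neq0$, so $f_{\psi,\delta}-f_\psi=\sum_{k\neq0}\mathcal A_kf_\psi$ has \sc\ wave front set contained in $\{|x|\ge R+c\}$; being \sc ly band limited with empty \sc\ wave front over the neighbourhood $\mathcal B(0,R+c)$ of $\overline{\mathcal B(0,R)}$, it is $O(h^\infty)$ there in $C^\infty$, hence $f_{\psi,\delta}=f_\psi+O_{H^s}(h^\infty)$ in $\mathcal B(0,R)$ for all $s$, which is (b). The low-frequency part (a conic neighbourhood of $\xi=0$, outside the FIO calculus) is harmless because of the band-limited assumption, and I would dispose of it as in \cite{S-Sampling}.

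For (c) the inclusion $\subset$ is (a) applied to $\sum_{k\neq0}\mathcal A_kf_\psi$. The reverse inclusion is the step I expect to be the crux, since a priori the infinitely many artifact terms could interfere destructively; the content of the hypothesis $B<\pi/sR$ is that they cannot. First I would note that the sets $C_k(\WFH(f_\psi)\setminus0)$, $k\in\mathbb Z$, are pairwise disjoint: from $C_k(x,\xi)=C_{k'}(x',\xi')$ with $k\neq k'$ one gets $\xi=\xi'$ and $x-x'=\tfrac{2\pi(k'-k)}{s|\xi|^2}\xi^\perp$, hence $2R\ge|x-x'|=\tfrac{2\pi|k'-k|}{s|\xi|}\ge\tfrac{2\pi}{sB}>2R$, a contradiction. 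Now fix $(x_0,\xi_0)\in C_{k_0}(\WFH(f_\psi)\setminus0)$ with $k_0\neq0$ (so $\xi_0\neq0$). Each $C_k(\WFH(f_\psi)\setminus0)$ is closed in $T^*\R^2\setminus0$ (the image of a closed set under the homeomorphism $C_k$); these sets are pairwise disjoint; and, shrinking a conic neighbourhood $V\ni(x_0,\xi_0)$ away from the zero section so that its base projection is relatively compact, (iii) leaves only finitely many terms to consider on $V$, whence $V\cap C_k(\WFH(f_\psi)\setminus0)=\emptyset$ for all $k\neq k_0$ after further shrinking $V$. On such a $V$ one then has $f_{\psi,\delta}-f_\psi=\mathcal A_{k_0}f_\psi+O(h^\infty)$. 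Finally, since $\mathcal A_{k_0}$ is elliptic with canonical graph $C_{k_0}$ and is invertible with $\mathcal A_{k_0}^{-1}=\mathcal A_{-k_0}$, writing $f_\psi=\mathcal A_{-k_0}(\mathcal A_{k_0}f_\psi)$ and applying the graph mapping property both ways yields the exact identity $\WFH(\mathcal A_{k_0}f_\psi)\setminus0=C_{k_0}(\WFH(f_\psi)\setminus0)$. Therefore $(x_0,\xi_0)\in\WFH(f_{\psi,\delta}-f_\psi)$, proving $\supset$ and completing (c). Apart from the disjointness computation above, the only points that need care are the bookkeeping of the locally finite sum (entirely governed by Remark~\ref{rem_4.2}) and the behaviour near $\xi=0$, neither of which affects the microlocal picture away from the zero section.
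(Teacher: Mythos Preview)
Your proposal is correct and follows essentially the same approach as the paper: all three parts are read off the decomposition \r{thm_1_eq1} using the graph--FIO mapping property for (a), the displacement bound $2\pi/(sB)>2R$ for (b), and for (c) the pairwise disjointness of the $C_k(\WFH(f_\psi)\setminus0)$ under $B<\pi/sR$ combined with the ellipticity (indeed invertibility) of $\mathcal A_{k_0}$. The paper argues the disjointness slightly differently, by observing that for given $(x^\sharp,\xi^\sharp)$ there is a unique $k$ with $C_{-k}(x^\sharp,\xi^\sharp)\in\mathcal B(0,R)\times\mathcal B(0,B)$, but this is just the contrapositive of your computation $|x-x'|\ge 2\pi/(sB)>2R$, so the arguments are the same in content.
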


\begin{proof}
Part (a) follows directly from the properties of \sc\ FIOs, see \cite{Martinez_book}, \cite{Guillemin-SC}. Part (b) follows from (a) since  for $k\not=0$, $C_k$ sends $\WFH(f)$ outside $T^*\mathcal{B}(0,R)$ under the condition assumed. 

For part (c), note first that the inclusion $\subset$ follows as in (a). To prove the equality, choose $(x^\sharp,\xi^\sharp)$ in the union on the right, say corresponding to $k=k_0$, i.e., $(x^\sharp,\xi^\sharp)= C_{k_0}(x_0,\xi_0)$ for some $(x_0,\xi_0)\in \WFH(f_\psi)$. Then $k_0$ and $(x_0,\xi_0)$ with that property are uniquely determined. Indeed, we must have $\xi_0=\xi^\sharp$ (we work in a fixed coordinate system, and comparing covectors at different points makes sense); and there is unique $k$ so that $C_k^{-1}(x^\sharp,\xi_0) = C_{-k}(x^\sharp,\xi_0)$, see \r{CR}, would land in $\mathcal{B}(0,R)$ since $B<\pi/sR$. This is also true under a small perturbation of $(x^\sharp,\xi^\sharp)$. 
Next, each $\mathcal{A}_k$ is elliptic, which proves the claim.
\end{proof}

\begin{remark}
We want to emphasize that the equivalent to (b) above in  \cite{S-Sampling} was derived about the interpolation method we discuss in next section, i.e., we have the same about $f_{\psi,\chi}$ we study there. Then the two methods are equivalent when the Nyquist condition holds. 
\end{remark}

\begin{figure}[h!] 
  \centering
	\includegraphics[page=2, scale=1]{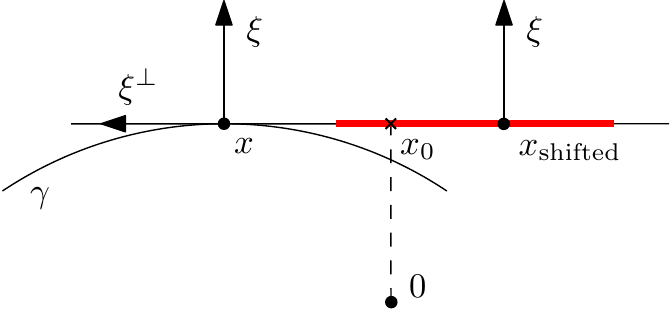}
\caption{\small Illustration to the canonical maps $C_k$ of $\mathcal{A}_k$. Left: Fixing $|\xi_0|$ (the dashed line), produces a finite number of points along the line through $x_0$ normal to $\xi_0$ where $x_0$ is mapped (with the same $\xi$). The horizontal axis is the shift relative to $x_0$ along that line. Right: The image of $(x_0,\xi_0)\in N^*\gamma$.}
\label{fig_2}
\end{figure}

\begin{remark}
The proof of Theorem~\ref{thm_art}(c) reveals something more.  The function $f_{\psi, \delta} - f_\psi$ can be regarded as the artifacts under the inversion of $\RR$. They lie outside $\mathcal{B}(0,R)$ by part (b). Moreover, they consist of the union of unitary images under $\mathcal{A}_k$ \textit{which do not intersect each other} in the following sense. Each singularity in the artifact comes from a unique one from $\WFH(f)$, and micro-localizing near $(x^\sharp,\xi^\sharp)$ allows us to recover $f$ microlocally at the unique pre-image just by applying $\sum\mathcal{A}_k$, which is non-trivial for one $k$ only. In particular, we can recover $f$ up to $O(h^\infty)$ from its artifacts outside $\mathcal{B}(0,R)$. 
\end{remark}

\begin{remark} 
As a corollary, the artifacts appear conormal to lines tangent to the edge, as in the classical case; and along each such tangent line, they stay at distance at least $2\pi/sB$ from the point of tangency. This is illustrated in Figure~\ref{fig_2}, right. In other words, $f_\delta$ is separated from the artifacts, assuming $\WFH(f)$ small enough. In a typical application of Theorem~\ref{thm_art}(c), $\mathcal{B}(0,R)$ is not going to be the computational window, it would be a much smaller neighborhood of a point $x_0$. Then the theorem applies to $f$ (micro)-localized there. On the other hand, without the localization, the reconstructed $f$ near $x_0$ could be affected by artifacts caused by singularities farther away.
\end{remark}

\subsection{Numerical examples} 

Our first example demonstrates the theorem, and in particular, the role of the magnitude $|\xi|$ of the frequency $\xi$. We take the ``coherent state''
\be{coh}
f_h (x;x_0,\xi_0)= e^{\i x\cdot \xi_0/h - |x-x_0|^2/2h}
\ee
with some $x_0$, $\xi_0\not=0$ as a test function; more precisely its real part. It is well known \cite{Zworski_book} that $\WFH(f_h)=\{(x_0,\xi_0)\}$; then  $\WFH(\Re f_h)$ also adds the point $(x_0,-\xi_0)$. In Figure~\ref{fig_coh_states_direct}, 
\begin{figure}[h!]\,\hfill
\begin{subfigure}[t]{.2\linewidth}
\centering
\includegraphics[height=.15\textheight]{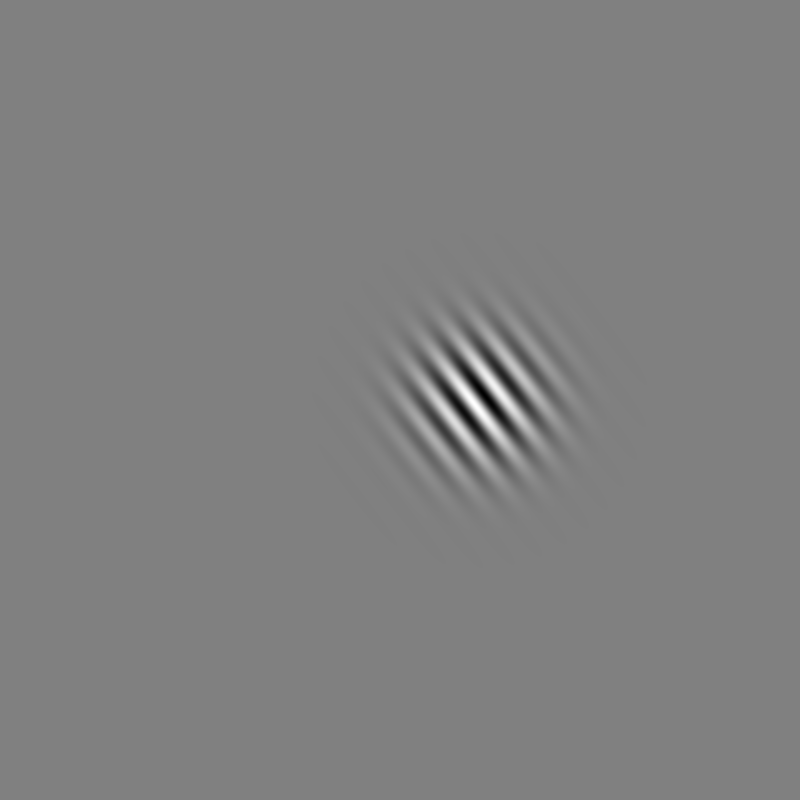}
\caption{The original $f$}
\end{subfigure}\hfill
\begin{subfigure}[t]{.2\linewidth}
\includegraphics[height=.15\textheight]{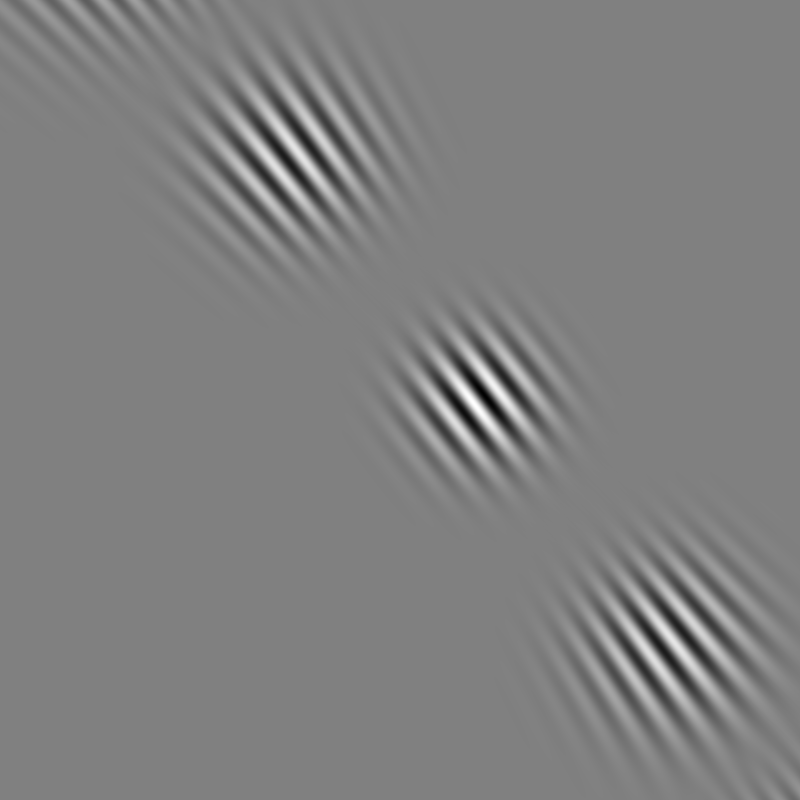}
\caption{Direct inversion with a $5^\circ$ step}
\end{subfigure}
\begin{subfigure}[t]{.2\linewidth}
\end{subfigure}\hfill
\begin{subfigure}[t]{.2\linewidth}
\includegraphics[height=.15\textheight]{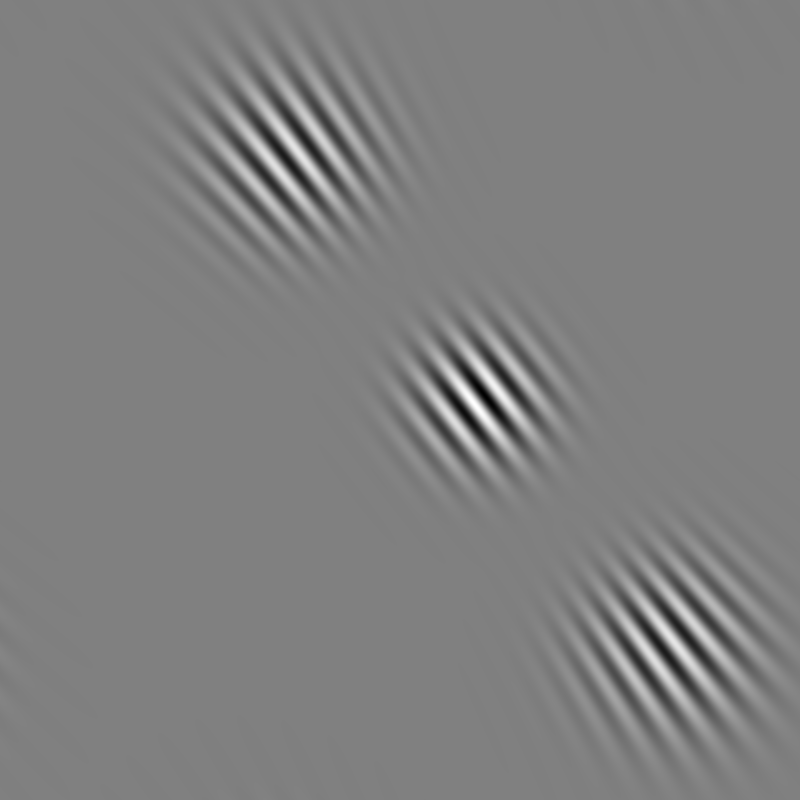}
\caption{Fourier multiplier inversion, $k=0,\pm1$.}
\end{subfigure}\hfill \,\\
\,\hfill
\begin{subfigure}[t]{.2\linewidth}
\centering
\includegraphics[height=.15\textheight]{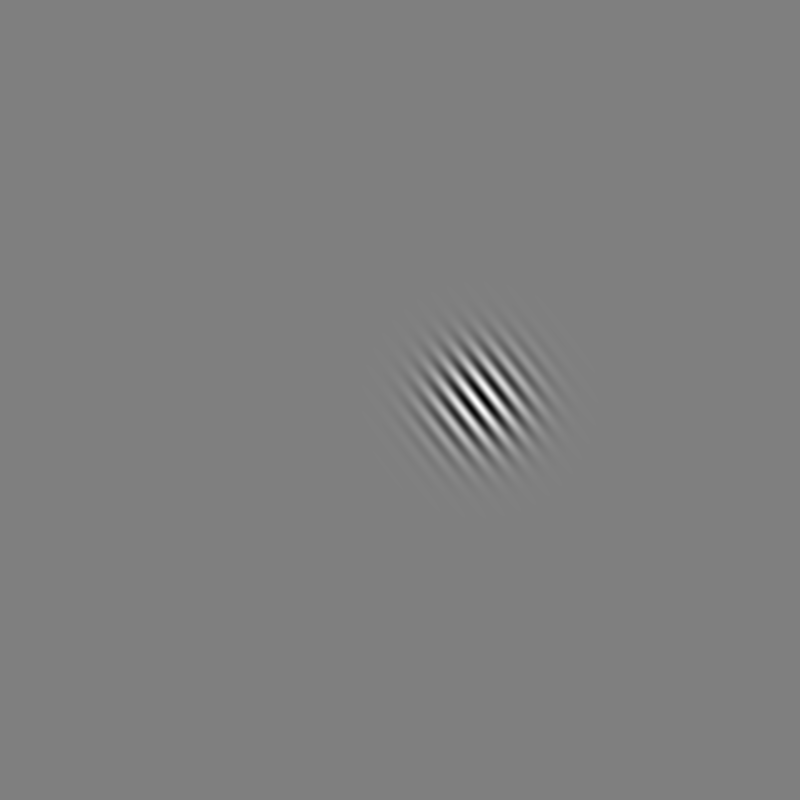}
\caption{The original $f$;  the frequency is $1.3$  higher}
\end{subfigure}\hfill
\begin{subfigure}[t]{.2\linewidth}
\includegraphics[height=.15\textheight]{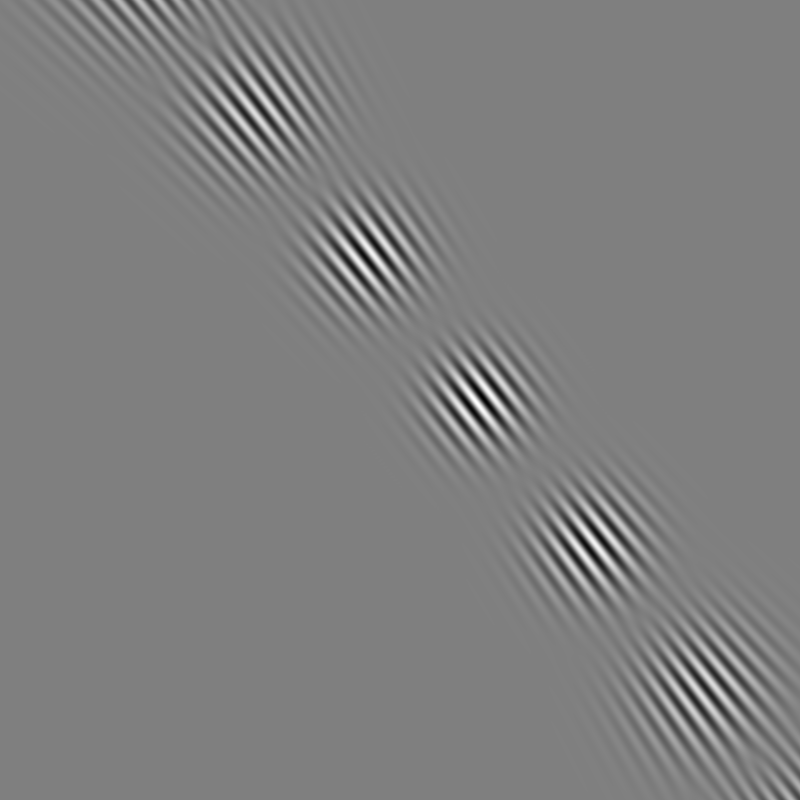}
\caption{Direct inversion with a $5^\circ$ step}
\end{subfigure}
\begin{subfigure}[t]{.2\linewidth}
\end{subfigure}\hfill 
\begin{subfigure}[t]{.2\linewidth}
\includegraphics[height=.15\textheight]{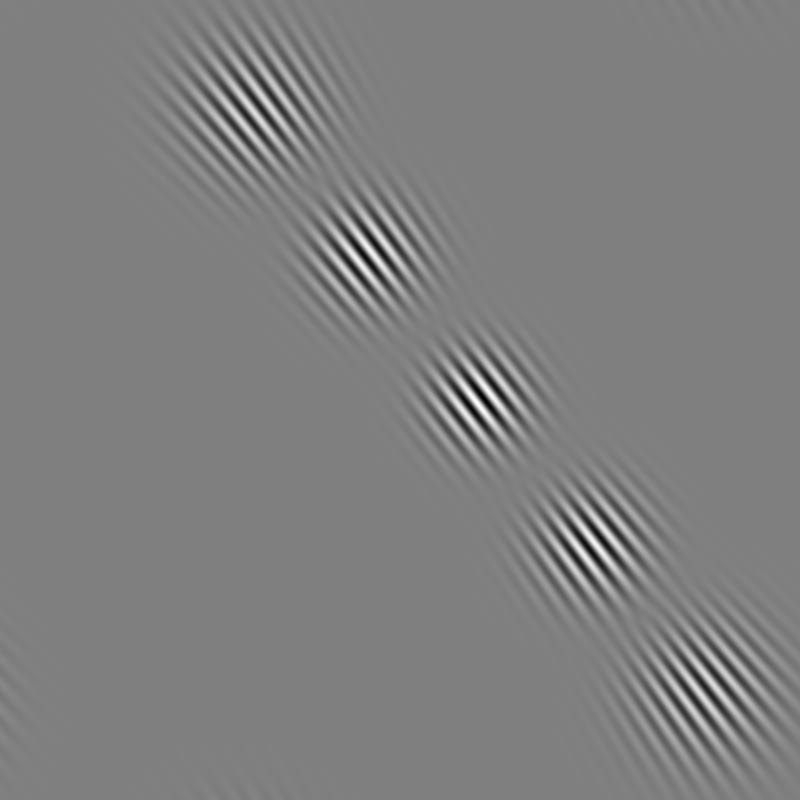}
\caption{Fourier multiplier inversion, $-2\le k\le 2$.}
\end{subfigure}\hfill\,
\caption{The direct method. A coherent state with two different frequencies $\xi$. The shifted artifacts stay at distances inversely proportional to $|\xi|$. Here, $sh = 2\pi/72$ (i.e., $5^\circ$). 
}\label{fig_coh_states_direct}
\end{figure}
we plot two such coherent states with different $|\xi|$ on the left, their computed inversions $f_{\psi,\delta}$ with $\psi=1$ in the middle, and the artifacts $\mathcal{A}_k$ on the right, with $k$ as indicated, computed as the Fourier multipliers \r{DA1}. The angular step is $5^\circ$, corresponding to $m= \pi/hs=36$. 

If we decrease $h$, we would make $f_h$ even more oscillating, we would decrease the angular step size $sh$ but the artifacts would appear at the same distances as before (and all those wave packets would be smaller because their ``width'' is proportional to $h^{1/2}$).


\section{The interpolation method, an asymptotic view}
\subsection{Asymptotic analysis of the interpolation method} 
We interpolate the data to a function of the ``continuous'' $(\varphi,p)\in [0,2\pi]\times [-R,R]$ (as usual thinking about $[0,2\pi]$ as a parameterization of the circle $S^1$, i.e., identifying $0$ and $2\pi$). There are many ways to interpolate discrete data, of course, and our choices are dictated by sampling theory requirements. Once we do that, we invert the data even though the interpolated $\RR f$ almost certainly does not belong to the range of $\RR$ by applying either the filtered backprojection  \r{FBP} to it or some other operator which is a parametrix of $\RR$. Numerically, we can just pass to a finer grid, upsample the data there, and do the inversion. 
We want to understand the resulting inversion. 


The interpolated data \r{data} where even $p$ might be discretized (but that will change soon)  then may looks like this:
\be{P1}
g_{\rm int}(\omega(\varphi),p) = \sum_{i,j} \psi(\omega( ihs_\varphi) )\RR f(\omega(ihs_\varphi),jhs_p) \chi\Big(\frac{\varphi-ihs_\varphi}{hs_\varphi}, \frac{p-jhs_p}{hs_p}\Big) ,
\ee
where the interpolation kernel $\chi$ depends on a priori assumptions on the largest \sc\ wave front set $\WFH(\RR f)$ (which in turn depends on similar assumptions on $\WFH(f)$), and on $s_\varphi$ and $s_p$, see the Appendix. One possible choice but not necessarily the only one or the best one depending on the goal, is to take $\chi$ to be product-like, i.e., to assume 
\be{P1a}
\chi(\varphi,p) = \chi_\varphi (\varphi) \chi_p(p).
\ee
 Assume that $\WFH(\RR f)$ restricts the dual variables $\hat\varphi$ and $\hat p$ to $|\hat\varphi|\le B'_\varphi <B_\varphi$ and $|\hat p|\le B_p'<B_p$, with some \sc\ band limits $B_\varphi'$, $B_p'$, where $B_\varphi$ and $B_p$ are fixed to allow for some degree of oversampling below.  A priori, they can be very close to $B_\varphi$ and $B_p$, and $B_\varphi/B_\varphi'$, and $B_p/B_p'$ can be considered as the degrees of oversampling. 
For lack of aliasing plus the  so chosen degree of oversampling, we require $s_\varphi\le \pi/B_\varphi$, $s_p\le\pi/B_p$ (the Nyquist conditions), 
the interpolation functions to be smooth, to satisfy $\supp\hat \chi_\varphi \subset [-\pi,\pi] $, and $\hat \chi_\varphi(\hat\varphi)=1$ for $|\hat \varphi|\le \pi B_\varphi'/B_\varphi$; similarly for $\hat p$. Then \r{P1} provides an approximation of $\RR f$ up to an $O(h^\infty)$ error, see the Appendix.

The critical case of no-oversampling ($B_\varphi' =B_\varphi$, $B_p'=B_p)$, which we do not allow, requires interpolation functions $\chi(s)=\sinc(\pi x)$, where $\sinc(x)=\sin x/x$. This function decays slowly and is not useful for practical implementations. On the other hand, with some oversampling, we can (and we did) chose $\chi$ to be of Schwartz class. A practical choice is the Lanczos-3 interpolation kernel 
\be{Lan3}
\text{Lan3}(x) := h_0(3-|x|)\sinc(\pi x) \sinc(\pi x/3),
\ee
where $h_0$ is the Heaviside function. While the Fourier transform of $\sinc(\pi x)$ is the characteristic function of $[-\pi,\pi]$, the Fourier transform of $\text{Lan3}$ is essentially (but not exactly, of course) supported in twice that interval but it is very close to $1$ in a half of it: in $[-\pi/2,\pi/2]$, see \cite{S-Tindel-noise}. Therefore, $\text{Lan3}$ would lead to some aliasing but it will preserve most of the non-aliased frequencies. On the other hand, if we use $\text{Lan3}(x/2)$ instead, its Fourier transform satisfies the requirements approximately with a degree of oversampling approximately $2$, while attenuating frequencies with magnitudes in $[-\pi/2, \pi]$. The resulting interpolation \r{P1} then would be $\RR f$ with a low pass filter applied, up to an $O(h^\infty)$ error. 

When we have a sampling rate $s_p$  exceeding the sampling requirements with respect to $p$, as we assume in this paper,  we just assume that we are given $\psi( ihs_\varphi ) \RR f(\omega(ihs_\phi) ,p)$ for all $p\in [-R,R]$. 
Then \r{P1} reduces to 
\be{P2}
g_{\rm int} = \sum_{j=1}^{\pi/sh} \psi( \omega(ihs)  )\RR f(\omega(jhs ) ,p) \chi\Big(\frac{\varphi-jhs}{hs }\Big),
\ee
where $s=s_\varphi$, which is the discrete data, interpolated. 
Note that $\mathcal{H}$  could be applied before or after the interpolation with the same result. 
The inversion in this case would be 
\be{I1}
f_{\psi,\chi}:= \RR'\mathcal{H} g_{\rm int} .
\ee

\begin{figure}[h!] 
\includegraphics[height=.16\textheight, trim={0 25 0 20},clip]{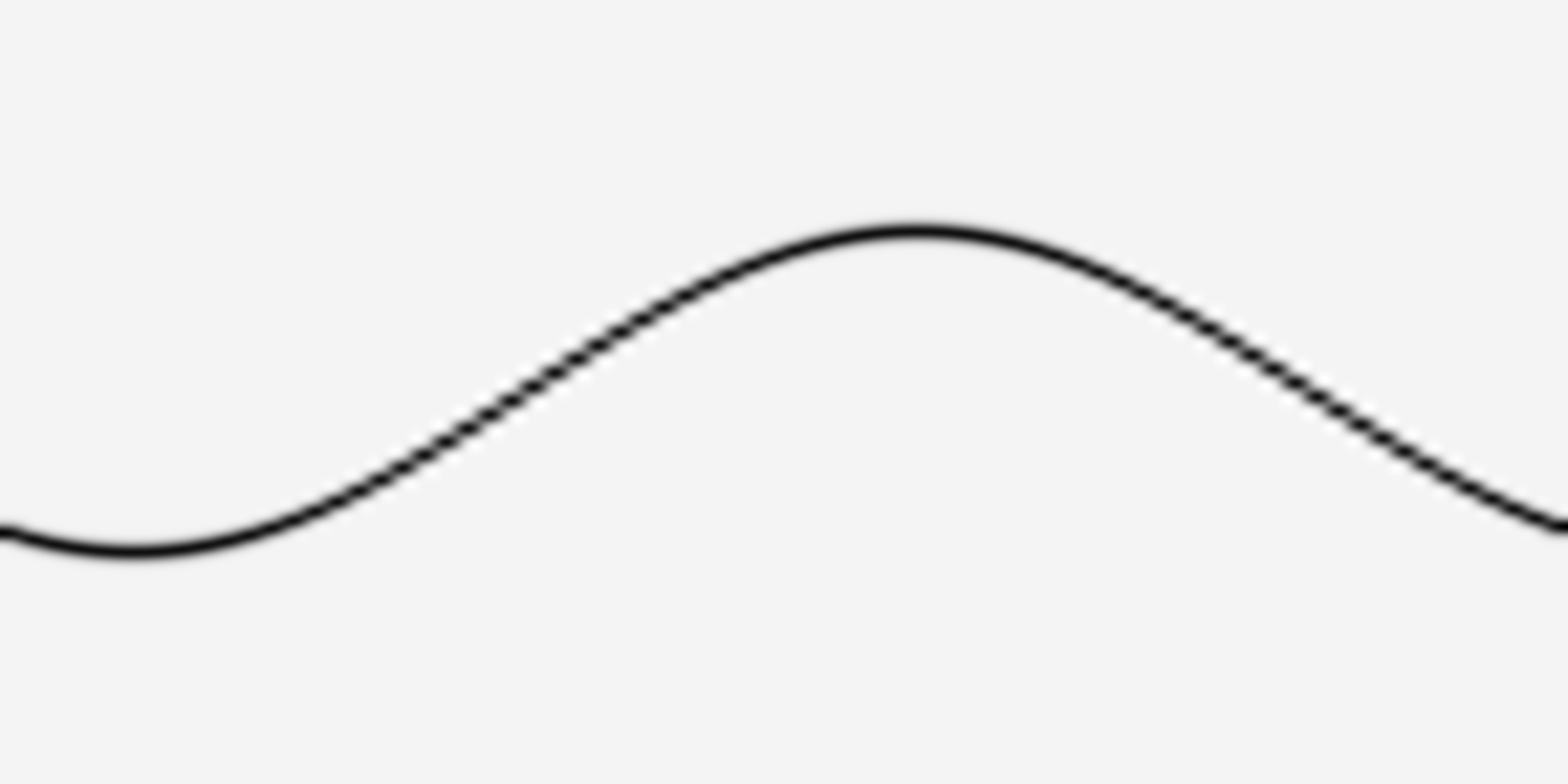}
\caption{The sinogram of a very small Gaussian, sampled at $5^\circ$ in $\varphi$, then interpolated. In the steep parts of the curve, the interpolation is not optimal but it is good along the horizontal ones.}\label{fig_sin}
\end{figure}

One of the results in \cite{S-Sampling} is that if $\supp f\subset \mathcal{B}(0,R)$, the sharp condition for avoiding aliasing is 
\be{F1}
s  < \pi/(RB),
\ee
where $B$ is the band limit for $f_h$. In that case, one can reconstruct $\mathcal{R}_\kappa f$ up to $O(h^\infty)$, and then $f$, using $\chi$ as described after \r{P1a} 
in the $\varphi$ variable. 
If $s <\pi/(2RB)$ (oversampling by a factor of two), one can use the Lanczos-3 interpolation which is local and much more practical, to achieve great accuracy. When \r{F1} does not hold, and one still uses the same reconstruction, aliasing occurs. The aliasing artifacts appear as 
a sum of   h-FIOs with canonical relations which happen to be the same as \r{CR}, 
when \be{RA2}
x\cdot\xi^\perp+ 2k\pi/s \in [-\pi/s , \pi/s ], 
\ee
(and we  relabeled them by changing the sign of $k$ compared to \cite{S-Sampling}). 
If \r{F1} holds, we have $|x\cdot\xi^\perp|<RB<\pi/s$, thus \r{CR} can hold with $k=0$ only, hence no aliasing. 

By \r{RA2}, $k$ depends on $(x,\xi)$. Note that for each $(x,\xi)$ with $\xi\not=0$, there is unique $k$ satisfying \r{RA2} with the exception of the case when the left-hand side happens to be an endpoint on the interval on the right; but then $\hat\chi$ kills the interpolation for such frequencies because we assume $\supp \hat\chi_\varphi\subset [-\pi,\pi]$. On the other hand, if we use the Lanczos-3 kernel, which does not satisfy this condition, even approximately (but it does in $[-2\pi,2\pi]$, as explained above), we can  get two aliased artifacts.

In Figure~\ref{fig_1}, we illustrate this analysis. We have $x\cdot\xi^\perp>0$, so if aliasing happens, we must have $k<0$ in \r{RA2}. Then that $(x,\xi)$ would  shift along the ray issued from $x$ tangent to the curve $\gamma$, in the direction $-\xi^\perp$, i.e., towards the point on that tangent line closest to the origin. The jump at $\gamma$ may creates a singularity at $(x,-\xi)$ as well but then $k>0$ and the artifact would still appear on the same ray.

\begin{figure}[h!] 
  \centering
	\includegraphics[page=1]{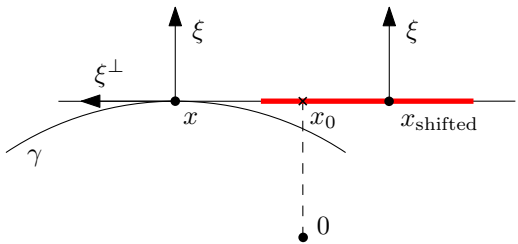}
\caption{\small Aliasing with the interpolaton method: $(x,\xi)$ shifts to $(x_{\rm shifted},\xi)$ along the line tangent to $\gamma$ at $x$. The point $x_{\rm shifted}$ can be at distance at most $d/3$ to the left and $d$ to the right of $x_0$, where $d=|x-x_0|$.}
\label{fig_1}
\end{figure}


A closer inspection of conditions \r{CR}, \r{RA2} reveals that the aliased $(x,\xi)$ may appear at points $x$ over some interval over that tangent line only. 
Indeed, condition \r{RA2}  is equivalent to 
\be{F3'}
- x\cdot\xi^\perp \in [(2k-1)\pi/s , (2k+1)\pi/s ].
\ee
 Then $x$ is displaced along the  line through $x$ in the direction of $\xi^\perp$ by $2\pi k /(s |\xi|)$, and for the shifted $x$ we have \[
-\Big(x + \frac{2\pi k}{s } \frac{\xi^\perp}{|\xi|^2} \Big)\cdot \frac{\xi^\perp}{|\xi|} \in \frac1{|\xi|}[-\pi/s, \pi/s].
\] 
Write $x\cdot\xi^\perp = x\cdot(\xi^\perp/|\xi|)|\xi|$ in \r{F3'}, and  determine the range of $|\xi|$ with $\xi/|\xi|$ fixed,  assuming first $-x\cdot\xi^\perp>0$, hence $k>0$. We see then that  $x$ can shift along the line through $x$ in the direction $\xi^\perp/|\xi|$  within the range of the signed distances 
\be{F3}
-\frac{x\cdot\xi^\perp}{|\xi|}\Big[\frac{2k}{2k+1} , \frac{2k}{2k-1} \Big],\quad k=1, 2,\dots,\quad x\cdot\xi^\perp>0,
\ee
where we used the convention $k(a,b)=(ka,kb)$. When $-x\cdot\xi^\perp<0$ (then $k<0$),  we get the same conclusion just by replacing $\xi$ by $-\xi$, and $k$ by $-k$.  Thus \r{F3} holds for $k<0$ , as we indicated above.  
In Figure~\ref{fig_1}, we have $k=-1$. 

The largest of the intervals in the square brackets in \r{F3} is $[2/3,2]$. The upper bound of the distance $2 |x\cdot\xi^\perp|/|\xi|$ is achieved right when aliasing occurs, i.e., when $k=1$ in \r{RA2} and the l.h.s.\ approaches $-\pi/s$. If we keep the direction of $\xi$ the same but increase its magnitude, the aliased singularity moves closer to $x$ until it gets at distance $(2/3) |x\cdot\xi^\perp|/|\xi|$. Then it jumps to $4/3$ of that factor, moves to $4/5$, etc. In the end, the minimal interval is $[2/3,2]$. 
Therefore, we have the following.

\begin{theorem}\label{thm_2}
Under the conditions of Theorem~\ref{thm_art}, statements (a) and (b) there are preserved for $f_{\psi,\chi}$. Instead of an equality in (c), we have  
\be{art1a}
\begin{split}
\WFH (f_{\psi, \delta}  )\setminus 0 &\subset \bigg\{x- \frac{x\cdot\xi^\perp}{|\xi|} \bigg[\frac23, 2\bigg]\frac{\xi^\perp}{|\xi|}, \; (x,\xi)\in \WFH(f_\psi)\setminus 0 \bigg\}. 
\end{split}
\ee
\end{theorem}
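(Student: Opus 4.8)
\textbf{Proof proposal for Theorem~\ref{thm_2}.}

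The plan is to reduce the statement to the microlocal analysis of the interpolated inversion $f_{\psi,\chi}=\RR'\mathcal{H}g_{\rm int}$ already carried out (in essence) in \cite{S-Sampling}, then to read off the location of the aliased wavefront set from the explicit canonical relations $C_k$ of \r{CR} together with the constraint \r{RA2}. Statements (a) and (b) transfer verbatim: (a) because $f_{\psi,\chi}$ is, microlocally away from the zero section, a finite sum of \sc\ FIOs with canonical relations among the $C_k$, so $\WFH(f_{\psi,\chi})\subset\bigcup_k C_k(\WFH(f_\psi))$; and (b) because under $B<\pi/sR$ the only admissible index in \r{RA2} is $k=0$ (as noted right after \r{RA2}, $|x\cdot\xi^\perp|<RB<\pi/s$ forces $k=0$), whence $g_{\rm int}=\psi\RR f+O(h^\infty)$ on the relevant frequency range and $f_{\psi,\chi}=f_\psi+O_{H^s}(h^\infty)$ in $\mathcal{B}(0,R)$. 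These are formal consequences of the \sc\ FIO calculus \cite{Martinez_book, Guillemin-SC}, exactly as in the proof of Theorem~\ref{thm_art}(a)--(b).

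The substance is (c), replaced here by the inclusion \r{art1a}. First I would fix $(x,\xi)\in\WFH(f_\psi)\setminus 0$ and enumerate, using \r{RA2} (equivalently \r{F3'}), the indices $k$ for which $(x,\xi)$ contributes to $\WFH(f_{\psi,\chi})$. For such a $k$, the contributed covector sits at the point $x+2\pi k\,\xi^\perp/(s|\xi|^2)$ by \r{CR}, with covector still $\xi$. Writing $x\cdot\xi^\perp=(x\cdot\xi^\perp/|\xi|)|\xi|$ and treating $\xi/|\xi|$ as fixed while letting $|\xi|$ range, I would convert the membership constraint \r{F3'} into a constraint on the signed displacement along the direction $\xi^\perp/|\xi|$. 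Assuming first $x\cdot\xi^\perp>0$ (so $k>0$ in the relabelled convention, as explained before \r{F3}), solving \r{F3'} for $|\xi|$ with $k$ fixed gives that the displaced point lies at signed distance $-\frac{x\cdot\xi^\perp}{|\xi|}\big[\frac{2k}{2k+1},\frac{2k}{2k-1}\big]$ from $x$, which is precisely \r{F3}. The case $x\cdot\xi^\perp<0$ reduces to the previous one by $\xi\mapsto-\xi$, $k\mapsto-k$.

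It then remains to take the union of the intervals in \r{F3} over all admissible $k\ge 1$. I would observe that $\big[\frac{2k}{2k+1},\frac{2k}{2k-1}\big]$ for $k=1,2,3,\dots$ is $[\frac23,2],[\frac45,\frac43],[\frac67,\frac65],\dots$; these are nested around $1$ with the $k=1$ interval being the largest, and consecutive intervals are adjacent ($\frac{2k}{2k+1}=\frac{2(k+1)}{2(k+1)+1}\cdot\frac{2k+3}{2k+1}$-type endpoint matching need not be exact, but the union is contained in) so the union over $k\ge1$ is contained in $[\frac23,2]$. Hence every contributed covector lies over a point of the form $x-\frac{x\cdot\xi^\perp}{|\xi|}[\frac23,2]\frac{\xi^\perp}{|\xi|}$, which is \r{art1a}. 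Finally I would note that, unlike the direct method where each $\mathcal{A}_k$ is elliptic and the reverse inclusion also holds, here the interpolation kernel $\hat\chi_\varphi$ attenuates or kills frequencies near the aliasing thresholds, so one only obtains an inclusion; this is why (c) is weakened to \r{art1a}.

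\textbf{Main obstacle.} The only delicate point is the bookkeeping that turns the discrete constraint \r{RA2} on $k$ into the continuous family of intervals \r{F3} and then sums them to $[\frac23,2]$: one must be careful about endpoints (where $\hat\chi_\varphi$ either vanishes, for a band-limited Schwartz kernel, or allows a second aliased term, for Lanczos-3), and about keeping $\xi/|\xi|$ fixed while $|\xi|$ sweeps the admissible range for each $k$. Everything else is a direct appeal to the \sc\ FIO calculus and to the canonical-relation computation \r{CR} already in hand.
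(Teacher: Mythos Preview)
Your proposal is correct and follows essentially the same route as the paper: the theorem is stated as a direct consequence of the derivation in the text preceding it, namely the constraint \r{RA2}/\r{F3'} on the admissible $k$, the displacement formula \r{CR}, and the computation \r{F3} of the resulting intervals, whose union is $[\tfrac23,2]$ because the intervals $[\tfrac{2k}{2k+1},\tfrac{2k}{2k-1}]$ are nested with the $k=1$ case being the largest. Your parenthetical about ``adjacent'' intervals and endpoint matching is garbled and unnecessary---nestedness alone (which you correctly note) already gives the union as $[\tfrac23,2]$---but the conclusion is right.
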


\subsection{Translation non-invariance and refocusing} 
One of the consequences of the ana\-lysis in \cite{S-Sampling} is that the resolution, defined there, is inversely proportional to $|x|$ (and also direction dependent). This is also consistent with \r{F1}, where $R\ll1$ allows $B\gg1$ for the same step $s$. This makes the origin a special point, with the resolution near it the highest. In a way, the interpolation method is ``focused'' at the origin.  
It is easy to see that the parallel geometry parameterization is not invariant under translations and rotations in the sense that it does not preserve its form. Rotations $x\mapsto Ux$ are innocent; they just transform $\omega$ into $\tilde\omega:= U^*\omega$. In \r{omega}, this corresponds to shifting $\varphi$ (and still considering in modulo $2\pi$). Shifting $x$ by $\tilde x= x-x_0$ however, changes the type of the equation $x\cdot \omega(\varphi) =p$ to 
\be{T2}
\tilde x\cdot \omega(\varphi) =p -x_0\cdot \omega(\varphi) .
\ee
Setting 
\be{Tp}
\tilde p = p -x_0\cdot \omega(\varphi),
\ee
we see that $\tilde p$ depends on $\varphi$ now. This is reasonable to expect: each time we choose an angle $\varphi$, we are free to put the origin on the line $\omega(\varphi)^\perp$ parameterizing the lines with that direction, anywhere we want to.  In \r{T1}, the choice happens to correspond to the line through the origin in the $x$-plane. This makes the origin a special point without any need to be such. We are free to change that parameterization to \r{T2}, for example, to even do something different, choosing $\tilde p$ to be a more general function of $\varphi$. 

That freedom does not do much when we have $\mathcal{R}f(\omega,p)$ for all $\omega$ and $p$ (or for them in some open set). In the discrete setting however, things change. We will call the re-parameterization \r{Tp} \textit{refocusing}. If we know $\RR f(\omega,p)$ for $\omega$ in a discrete set (and all $p$), we can perform \r{Tp} for each such $\omega$, and $x_0$ fixed. This would map the curve $x_0 \cdot\omega=p$, see Figure~\ref{fig_sin} into the straight line $\tilde p=0$. 
Then the inversion would look like $x_0$ were the origin, which would move the aliasing artifacts elsewhere! Recall that we assume a high enough sampling rate $s_p$, which makes implementing \r{Tp} easy.

\subsection{Relation between the two methods}
Finally, we show that the interpolation reconstruction operator is just the ``direct'' one convolved in the $\omega$ variable with the interpolating function. 
\begin{theorem}  \label{thm_conv}
For every $f\in C_0^\infty(\R^2)$, 
\[
f_{\psi,\chi}(r\theta)  = \chi_{sh}  *_{\theta} f_{\psi,\delta}(r\theta),
\]
where $*_\theta$ is the circular convolution in the $\theta$ variable, and $\chi_h(\theta)= h^{-1}\chi(\theta/h) $.  
\end{theorem}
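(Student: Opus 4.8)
The plan is to compare the two reconstruction operators directly in the spatial domain by writing both as integrals/sums of the common filtered sinogram $(\mathcal{H}\RR f)(\omega(\varphi),p)$ evaluated along the curve $p = x\cdot\omega(\varphi)$, and to show that the interpolation weights $\chi$ simply produce a circular convolution in the angular variable. Concretely, set $x = r\theta$ with $r\ge 0$ and $\theta\in S^1$. First I would unfold the definition \r{I1} of $f_{\psi,\chi} = \RR'\mathcal H g_{\rm int}$, using \r{P2} for $g_{\rm int}$: since $\mathcal H$ acts only in $p$ and commutes with the interpolation in $\varphi$, one gets
\[
f_{\psi,\chi}(r\theta) = \int_{S^1} \sum_{j} \psi(\omega_j)\,(\mathcal{H}\RR f)(\omega_j, r\theta\cdot\omega(\varphi))\,\chi_\varphi\!\Big(\frac{\varphi - \varphi_j}{hs}\Big)\,\d\varphi,
\]
where $\varphi_j = jhs$. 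The key structural point to exploit is that $\theta\cdot\omega(\varphi)$ depends on $\theta$ and $\varphi$ only through the angle between them; parameterizing $\theta = \omega(\alpha)$, we have $\theta\cdot\omega(\varphi) = \cos(\varphi-\alpha)$, and likewise $\omega_j\cdot\theta$ in the direct-method formula \r{f_delta_d} involves only $\varphi_j - \alpha$. This is precisely the rotation-covariance that makes the backprojection a circular convolution.

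\medskip
Next I would write the direct reconstruction \r{f_delta_d} as a function on $S^1$ in the same way:
\[
f_{\psi,\delta}(r\omega(\alpha)) = \frac{2\pi}{m}\sum_{j=1}^{m}\psi(\omega_j)\,(\mathcal{H}\RR f)(\omega_j, r\cos(\varphi_j-\alpha)),
\]
and observe — using that $\mathcal{H}\RR f$ and $\psi$ are even, plus Assumption~\ref{assumption} — that this equals $\frac{\pi}{m}\sum_{j\in\mathbb Z/2m\mathbb Z}\psi(\omega_j)(\mathcal H\RR f)(\omega_j, r\cos(\varphi_j - \alpha))$, i.e. a sum of point masses $\frac{\pi}{m}\sum_j \delta(\varphi - \varphi_j)$ against the function $\varphi\mapsto \psi(\omega(\varphi))(\mathcal H\RR f)(\omega(\varphi), r\cos(\varphi-\alpha))$ — this is the content of the notation $(\psi\RR f)_{\rm int,\delta}$ in \r{P2d}. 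Then I would compute the circular convolution $\chi_{sh}*_\theta f_{\psi,\delta}$: writing the convolution in the $\alpha$ variable with kernel $\chi_{sh}(\alpha) = (sh)^{-1}\chi(\alpha/(sh))$ and using $sh = \pi/m$,
\[
(\chi_{sh}*_\theta f_{\psi,\delta})(r\omega(\alpha)) = \int_{S^1}\chi_{sh}(\alpha-\beta)\,f_{\psi,\delta}(r\omega(\beta))\,\d\beta.
\]
Substituting the point-mass representation of $f_{\psi,\delta}$, the $\beta$-integral collapses the $\delta(\beta-\varphi_j)$ and leaves $\sum_j \psi(\omega_j)(\mathcal H\RR f)(\omega_j, r\cos(\varphi_j-\beta))\big|_{\beta=\varphi_j}$ — wait, more carefully: each term becomes $\frac{\pi}{m}\,\chi_{sh}(\alpha - \varphi_j)\,\psi(\omega_j)\,(\mathcal H\RR f)(\omega_j, r\cos(\varphi_j - \alpha))$, and $\frac{\pi}{m}\chi_{sh}(\alpha-\varphi_j) = \chi_\varphi\big((\alpha - \varphi_j)/(sh)\big)$. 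Summing over $j$ reproduces exactly the first displayed formula for $f_{\psi,\chi}(r\omega(\alpha))$, which finishes the proof.

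\medskip
The main obstacle — and the step requiring care rather than cleverness — is the bookkeeping of the even/periodic reduction: \r{f_delta_d} sums over $j=1,\dots,m$ with a factor $2\pi/m$, whereas the natural convolution picture wants a sum over all $2m$ sample points with factor $\pi/m$, and one must check that the evenness of $\psi$, of $\mathcal H\RR f$, and the symmetry $\cos(\varphi_j+\pi-\alpha) = -\cos(\varphi_j-\alpha)$ together with evenness of $\mathcal H\RR f$ in $p$ make these two presentations literally equal, so that no spurious factor of $2$ appears in the convolution. A secondary subtlety is making the $\delta$-against-test-function manipulation rigorous: one should regard $(\psi\RR f)_{\rm int,\delta}$ as the distribution in \r{P2d} and note that pairing it with the smooth (in $\varphi$, for fixed $r,\alpha$) function $\varphi\mapsto \chi_\varphi((\alpha-\varphi)/(sh))$ and then with $p\mapsto (\mathcal H\RR f)(\cdot,r\cos(\cdot-\alpha))$ is legitimate because $\mathcal H\RR f$ is smooth in $(\varphi,p)$ for $f\in C_0^\infty$ (the filtered backprojection of a Schwartz function is smooth), so all the interchanges of sum and integral are justified. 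Once the symmetry bookkeeping is pinned down, the identity is immediate.
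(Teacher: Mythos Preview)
Your overall strategy is right and matches the paper's: both reconstructions are built from $(\mathcal H\RR f)(\omega_j,\,x\cdot\omega(\varphi))$, and the identity $r\omega(\alpha)\cdot\omega(\varphi)=r\cos(\varphi-\alpha)$ is exactly the rotation covariance that makes the $\chi$-weights appear as a circular convolution in the spatial angle. The bookkeeping you flag (summation over $m$ versus $2m$ angles, factors of $2$) is indeed the only delicate point, and it is handled as you say, via evenness of $\psi$ and of $\mathcal H\RR f$.

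There is, however, a genuine gap in your computation of $\chi_{sh}*_\theta f_{\psi,\delta}$. You write that ``the $\beta$-integral collapses the $\delta(\beta-\varphi_j)$'' and that each term becomes $\frac{\pi}{m}\chi_{sh}(\alpha-\varphi_j)\psi(\omega_j)(\mathcal H\RR f)(\omega_j, r\cos(\varphi_j-\alpha))$. This is not correct: the $\delta$'s in \r{P2d} live in the \emph{sinogram} angle $\varphi$, and they have already been integrated out by $\RR'$ when forming $f_{\psi,\delta}(r\omega(\beta))$. What remains is a genuine smooth function of $\beta$ --- there are no point masses in the spatial angle --- so the convolution $\int_{S^1}\chi_{sh}(\alpha-\beta)f_{\psi,\delta}(r\omega(\beta))\,\d\beta$ does \emph{not} reduce to a finite sum. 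Indeed, summing your claimed terms over $j$ yields an expression with no integration at all, which cannot equal your first displayed formula for $f_{\psi,\chi}$ (that one still carries an integral over $\varphi$).

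The fix is precisely the paper's step: in your first displayed expression for $f_{\psi,\chi}$, substitute $\tilde\varphi=\varphi-\varphi_j$ term by term, and use
\[
r\omega(\alpha)\cdot\omega(\tilde\varphi+\varphi_j)=r\omega(\alpha-\tilde\varphi)\cdot\omega_j
\]
to transfer the dependence on the integration variable from the sinogram side to the \emph{spatial} angle. After this shift the inner sum becomes exactly $f_{\psi,\delta}(r\omega(\alpha-\tilde\varphi))$ (up to the even/periodic reduction you already flagged), and the remaining $\tilde\varphi$-integral against $\chi(\cdot/sh)=(sh)\chi_{sh}(\cdot)$ is the claimed circular convolution. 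In short, the convolution variable arises from the sinogram integration variable after a shift, not from a separate spatial angular variable $\beta$; your $\varphi$ and $\beta$ are in fact linked by $\beta=\alpha-(\varphi-\varphi_j)$, and making that link explicit is the entire argument.
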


\begin{proof}
By \r{P2}, \r{I1},
\[
f_{\psi,\chi}(x) = \int \sum_{j} \psi( jsh )(\mathcal{H}\RR f)(\omega(jsh) ,x\cdot\omega(\varphi ) )\chi\Big(\frac{\varphi-jsh }{sh }\Big)\d\varphi.
\]
Write $x=r\omega(\theta)$, and make the change of variables $\tilde \varphi = \varphi-jsh $. Since $x\cdot\omega(\tilde\varphi +jsh) = r \omega(\theta)\cdot \omega(\tilde\varphi +jsh) = r \omega(\theta - \tilde\varphi )\cdot \omega(jsh)$, we get
\[
f_{\psi,\chi}(r\omega(\theta)) = sh\int \sum_{j} \psi( jsh )(\mathcal{H}\RR f) (\omega(jsh) , r \omega(\theta - \tilde\varphi )\cdot \omega(jsh)  )\chi_{sh}(\tilde\varphi )\, \d\tilde \varphi.
\]
This is exactly the circular convolution of \r{f_delta_d'} with $\chi_{sh}$ as claimed. 
\end{proof}

The convolution in Theorem~\ref{thm_conv} is a Fourier multiplier in polar coordinates, in the radial variable,  with $\mathcal{F}_h\chi_s$. Passing back to the Cartesian coordinates, we get an \HPDO\ with principal symbol $\chi_s(-x^2\xi_1 + x^1\xi_2)$ at least away from $x=0$. This shows that the two reconstructions are related by an \HPDO, and since we showed in Theorem~\ref{thm_1} that $f_\psi\mapsto f_{\psi,\delta}$ is an $h$-FIO, it now follows that $f_\psi\mapsto f_{\psi,\chi}$ is an $h$-FIO with the same canonical relation, something we proved directly in Theorem~\ref{thm_2}.

\subsection{Comparison of the two methods} 

We managed to get from the discrete measurements $g_j$ \r{data} to ``continuous'' ones with the aid of the Poisson summation formula \r{S4}. We will offer here an alternative point of view. 

We can think of the numerical integration formula \r{f_delta_d} in the following way. First, we interpolate the discrete data somehow with an interpolation kernel $\chi$ having total integral one. We do that for each $x$, along the curve $\varphi\mapsto (\omega(\varphi), x\cdot\omega(\varphi))$, see Figure~\ref{fig-SL}. Then integrating the interpolated function removes $\chi$ and reduces to the finite sum  \r{f_delta_d}. 

The interpolation method, on the other hand, interpolates horizontally in  Figure~\ref{fig-SL}, i.e., along the lines $\varphi\mapsto (\omega(\varphi), p=\text{const.})$. Only one of those lines coincides with some of the lines above: the line $p= 0$ (in the $(\varphi,p)$ plane, more precisely, on the cylinder $S^1\times\R$), which corresponds to all lines in the $x$ plane through $x=0$. The two methods are equivalent, roughly speaking, in an infinitesimal neighborhood of $x=0$, as Theorem~\ref{thm_conv} indicates as well. Away from $x=0$, $f_{\psi,\chi}$ is just an angularly blurred version of $f_{\psi,\delta}$.  The advantage of the direct method is that the interpolation before integration (which is not needed, as explained above) is $x$-dependent. In that sense, that methods focuses at every point $x$ to evaluate $f_{\psi,\delta}$ there.

\section{Recovery of an edge and aliasing from an edge, classical view}  \label{sec-classical}
Assume that $f$ is piecewise smooth with a jump over a smooth curve (an ``edge'') near some point $x_0$, and has no other singularities. We want to understand how well the edge is resolved, and what kind of aliasing artifacts are created. We want to emphasize that if $f$ has other singularities, they may create aliasing artifacts near $x_0$ as well, interfering with the ones we analyze here. 

Assume that we use the direct method, formula \r{f_delta}, which is also \r{f_delta_d} when we restrict our attention to lines close to being tangent to the edge, and $\psi=1$ there. It is enough to analyze each summand in \r{f_delta} independently. We are going to analyze three cases which do not exhaust all possible ones. In this section, $h=1$, i.e., we do not take the angular step to be a small parameter, respectively $m$ is fixed. We study the direct method here only. 

\subsection{A flat edge} Assume that the edge is flat neat $x_0$. Then the recovered $f_\delta$ depends on whether that edge is normal to some of the $\omega_j$'s (i.e., parallel to some of the lines in our family) or not; and in the latter case, it will depend to the distance of its normal to $\{\omega_j\}$.   

Assume first that the edge is normal to $\omega_{j_0}$ for some $j_0$. Then $\RR f(\omega_{j_0},p)$ would have a jump-type of singularity at some $p=p_0$, and $\RR f(-\omega_{j_0},p)$ would have a jump-type of singularity at  $p=-p_0$. 
Without loss of generality, we can assume that it is the former term appearing in \r{f_delta}. 
Then $\mathcal{H} \RR f(\omega_{j_0},p)$ would be a distribution but not a (locally $L^1$)  function! Indeed, writing $\RR f(\omega_{j_0},p)=kh_0(p-p_0)$, $k\not=0$, modulo higher regularity terms (which regularity depends on the behavior of $f$ near that edge), where $h_0$ is the Heaviside function, one needs to understand $\mathcal{H} \RR f(\omega_{j_0},p)\sim k\mathcal{H} h_0(p-p_0)$. Therefore, the leading singularity of $f_\delta$ would be expected to be (ignoring the localization for a moment, 
\[
\begin{split}
\frac{2\pi}m k\mathcal{H} h_0(p-p_0) &= \frac{k}{2m} H\d_p h_0(\cdot-p_0)= \frac{k}{2m} H\delta(\cdot-p_0) \\
& =  \frac{k}{2\pi m}\,  \pv  \frac{1}{p-p_0}, \quad \text{where $p=x\cdot\omega$}.
\end{split}
\]
In \r{L1}, we provide a more precise statement, and a second term. 
Note that this is the behavior along the line $x\cdot\omega_{j_0}=p_0$ independently of whether the point on that line is on the actual edge or not. The result is a distribution. All other terms in \r{f_delta}  would contribute smooth terms, so this describes all leading order singularities of $f_\delta$ under our assumptions.

If the edge is not normal to any $\omega_{j}$, then $f_\delta$ would be smooth. When the edge is ``almost normal'' to some $\omega_{j}$ however, there will be steep change across that line.  

In Figure~\ref{fig_flat-edge}, we demonstrate this behavior. The computations are done in a $1000\times 1000$ grid. 
The angular step is $10^\circ$, with the vertical direction being among the set of the directions (corresponding to $\varphi_1=0$ being the first one).  
The reason the bright phantom looks so pale in (b) is that the range has been adjusted from $[0,1]$ in (a) to $[-2.1, 2.7]$.  Two cross-sections are plotted. The one through the maximum of the jump actually recovers the edge well, plus a $\pv (1/x)$ type of singularity as predicted. 
\begin{figure}[h!]\hfill 
\begin{subfigure}[t]{.2\linewidth}
\centering
\includegraphics[height=.17\textheight]{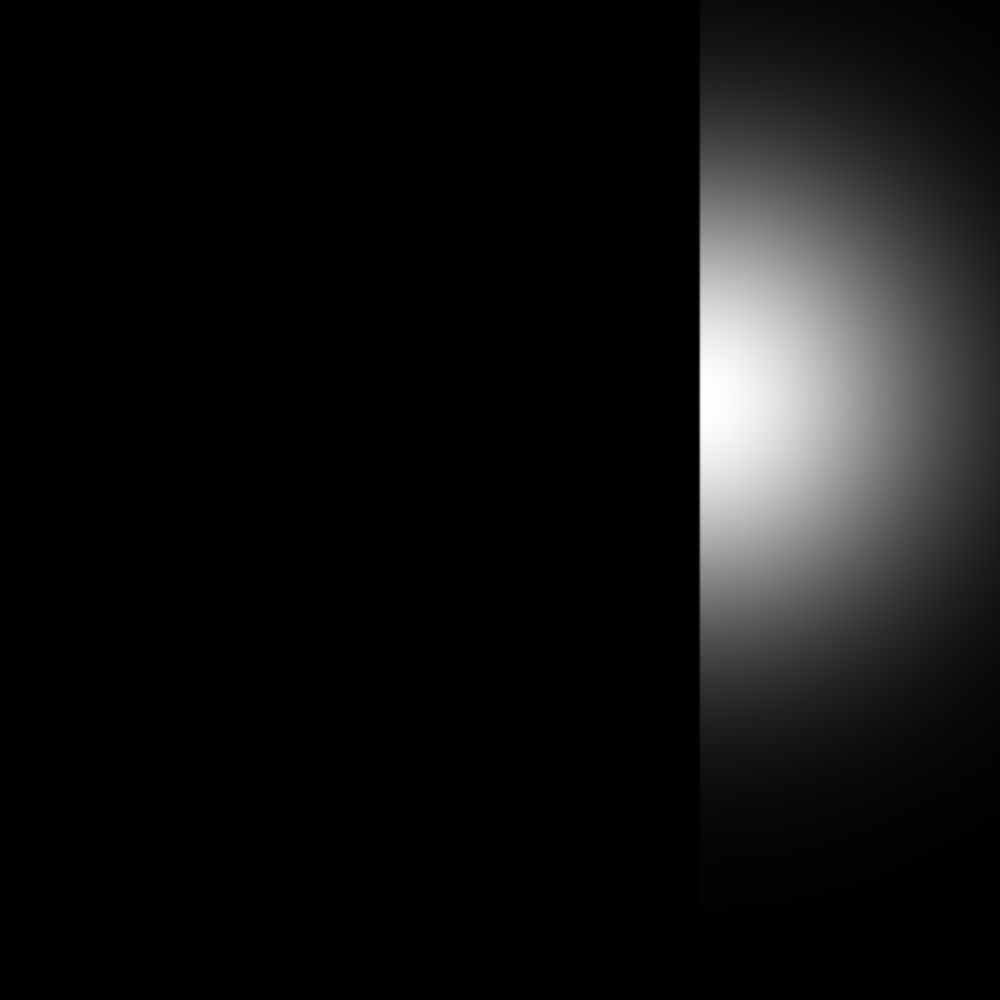}
\caption{A flat edge}
\end{subfigure}
\hfill
\hfill
\begin{subfigure}[t]{.2\linewidth}
\centering
\includegraphics[height=.17\textheight]{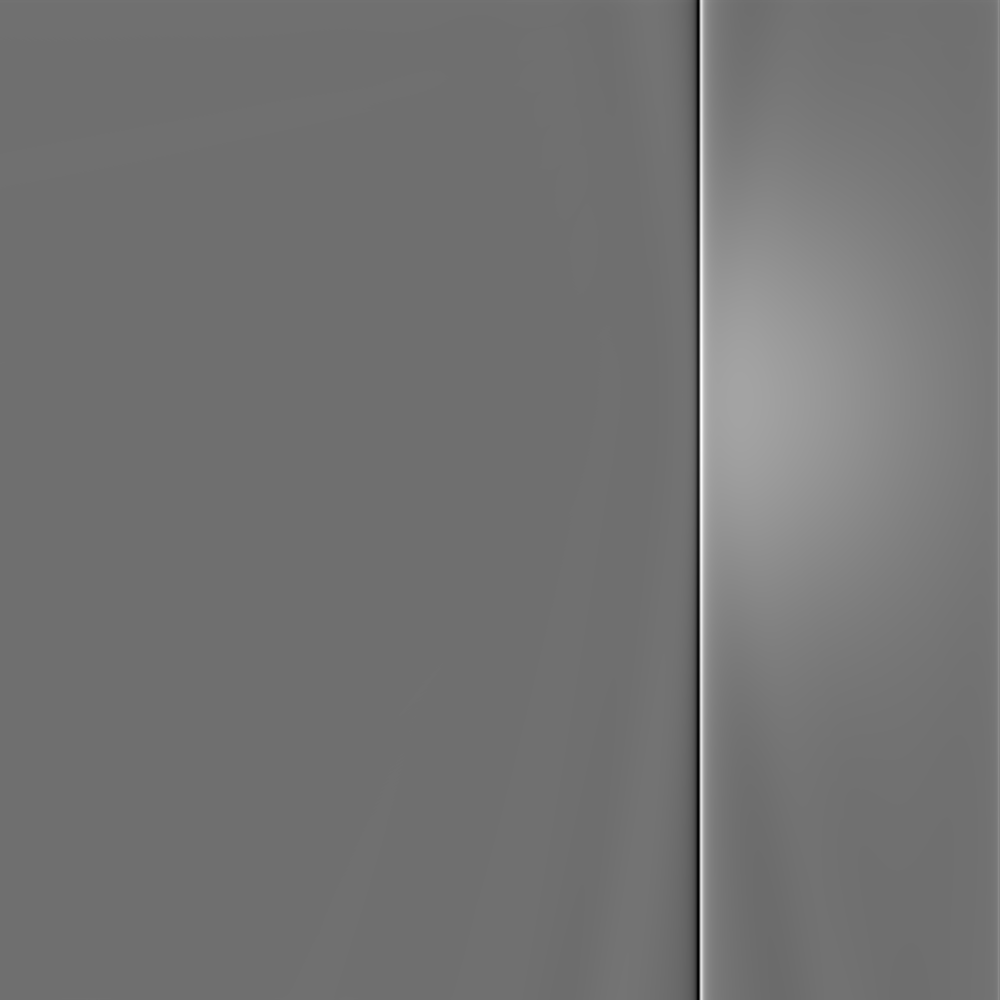}
\caption{Recovered, angular step  $10^\circ$.}
\end{subfigure}\hfill \hspace{0.2in}\,{}\\  
\hfill 
\begin{subfigure}[t]{.38\linewidth}
\centering
\includegraphics[height=.15\textheight]{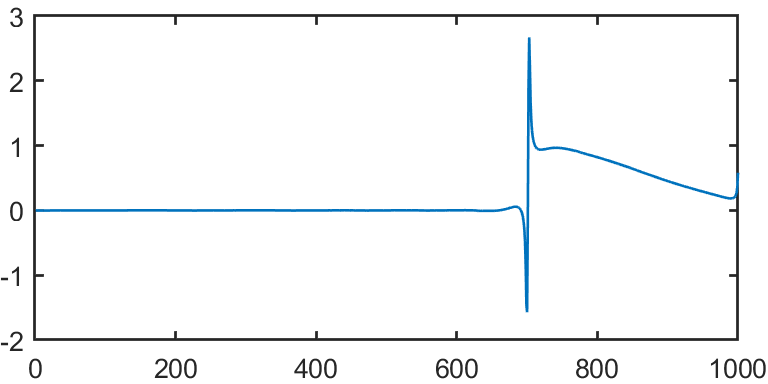}
\caption{A horizontal cross-section plot near the maximal jump.}
\end{subfigure}\ \ \ \ \hfill  \ \ \ \ 
\begin{subfigure}[t]{.38\linewidth}
\centering
\includegraphics[height=.15\textheight]{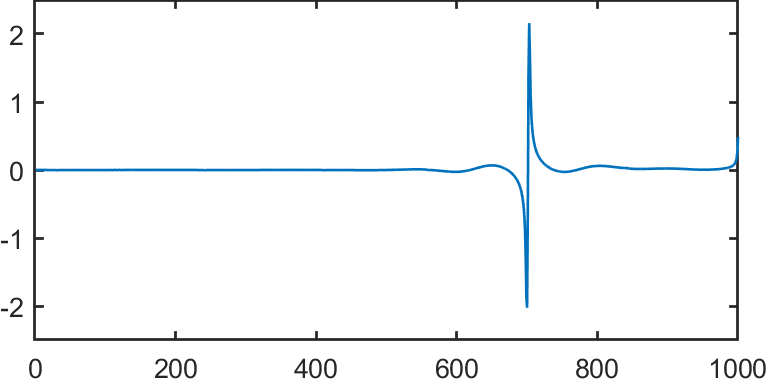}
\caption{A horizontal cross-section plot near the bottom.}
\end{subfigure}\hfill\hfill{}
\caption{A flat edge with $s=10^\circ$ with $\pv (1/x)$ singularities visible in the reconstruction. Near the actual edge, the jump is recovered well but this is due to smooth but sharply changing contributions from close non-tangent lines.
}\label{fig_flat-edge}
\end{figure}
The edge is well recovered because the contributions from the lines with directions close to vertical are smooth but sharply changing near the edge. This is better understood in asymptotic sense, when the angular step size gets smaller and smaller, as we do later. The second cross-section is near the bottom of the square.  

\subsection{A strictly convex/concave edge} 
Assume that the edge is strictly convex or concave, depending on the direction at which we are looking, i.e., it is a smooth curve with nonzero curvature near $x_0$. Then $\RR f$ would have singularities at lines tangent to the edge, where $f$ jumps. Fixing one such direction, $\RR f(\omega,p)\sim k h_0  (p-p_0)^{1/2}_+$, where $t_+=\max(t,0)$, $k=2\sqrt2f(x_0)/\sqrt{|\kappa|}$, and $\kappa$ is the curvature. Again, without loss of generality we assumed that the curve lies in $p\ge p_0$, not $p\le p_0$.  
As above, we need to understand $\mathcal{H}$ applied to it.  
This is done in \r{L2} in Lemma~\ref{lemma_H}. 
We get that $f_\delta$ would have conormal singularities along the line determined by $(\omega_{j_0},p_0)$ of the kind
\[
-\frac{k}{2\pi m}(p-p_0)_-^{-1/2}
\]
as the most singular part of $f_\delta$, near the line $x\cdot\omega_j=p_0$. This is an integrable singularity. A numerical reconstruction is shown in Figure~\ref{fig_convex-edge}.
\begin{figure}[h!]\hfill 
\begin{subfigure}[t]{.2\linewidth}
\centering
\includegraphics[height=.17\textheight]{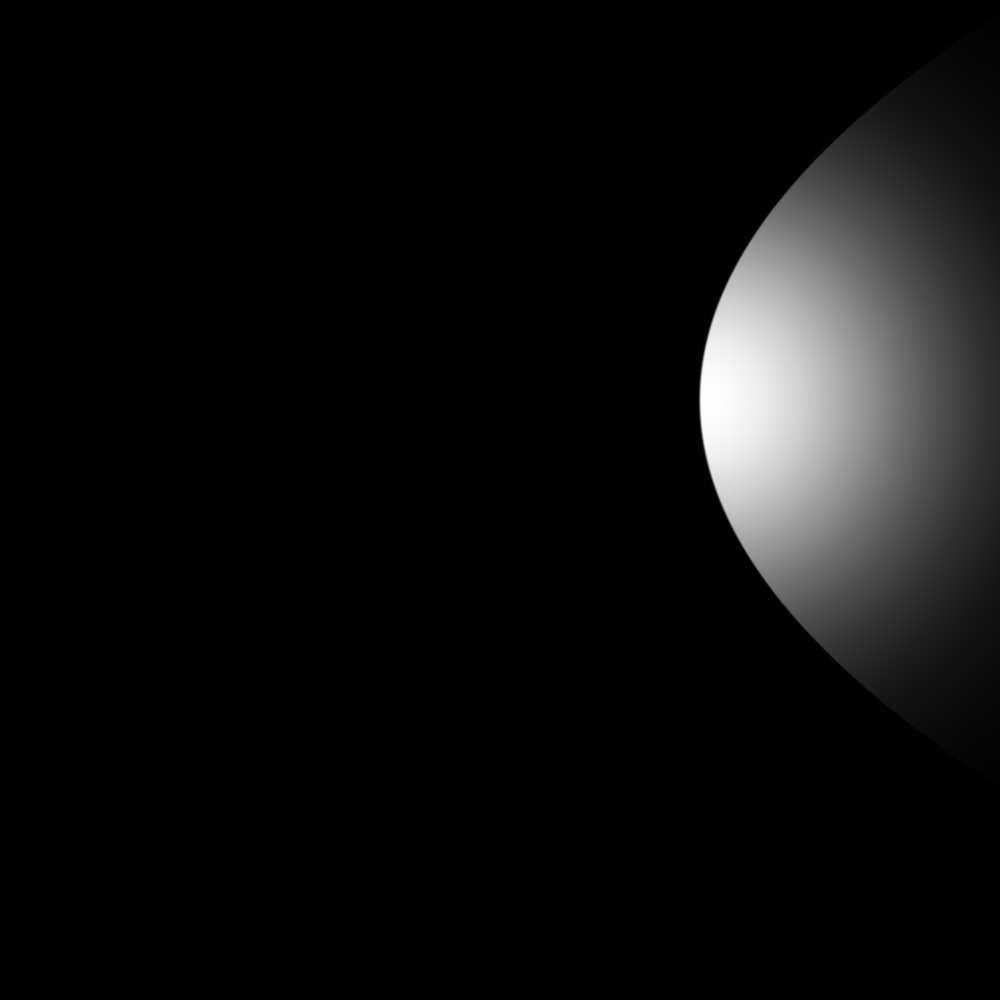}
\caption{A convex edge}
\end{subfigure}
\hfill
\hfill
\begin{subfigure}[t]{.2\linewidth}
\centering
\includegraphics[height=.17\textheight]{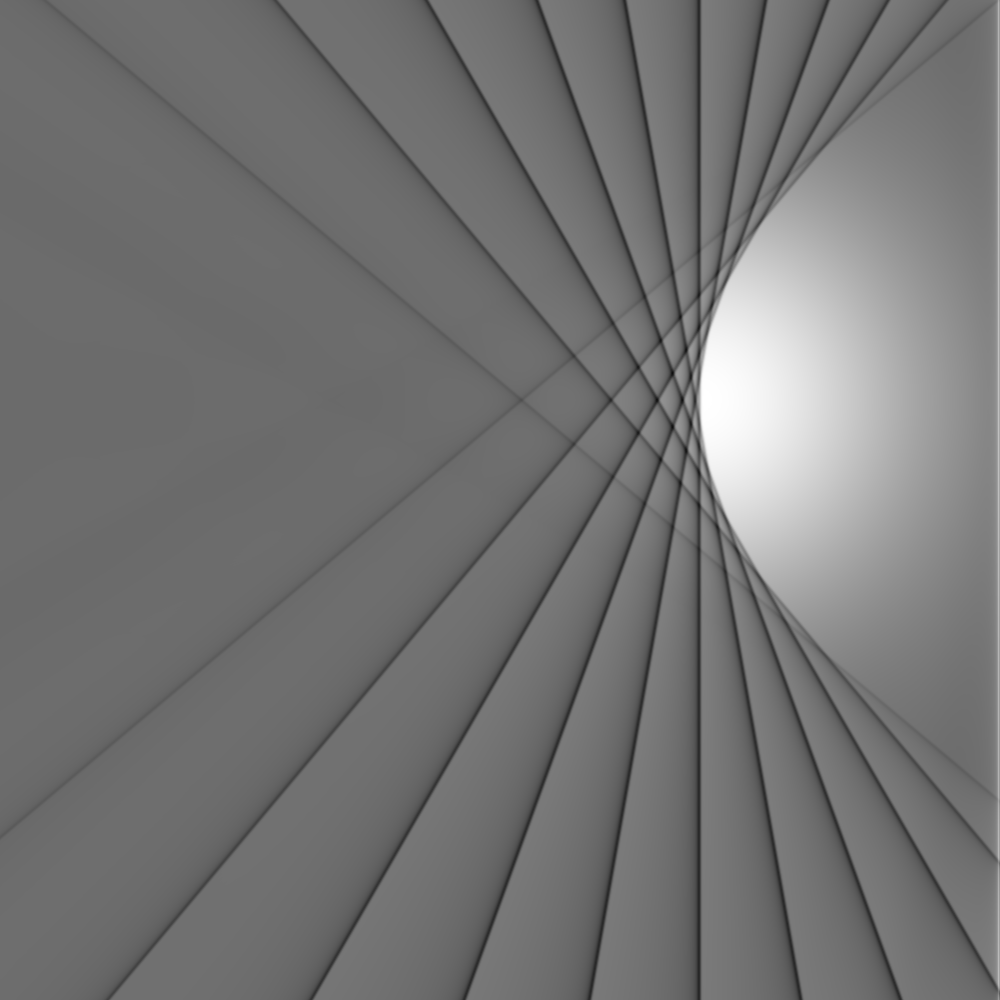}
\caption{Recovered, angular step  $10^\circ$.}
\end{subfigure}\hfill \hspace{0.2in}\,{}\\  
\hfill 
\begin{subfigure}[t]{.38\linewidth}
\centering
\includegraphics[height=.15\textheight]{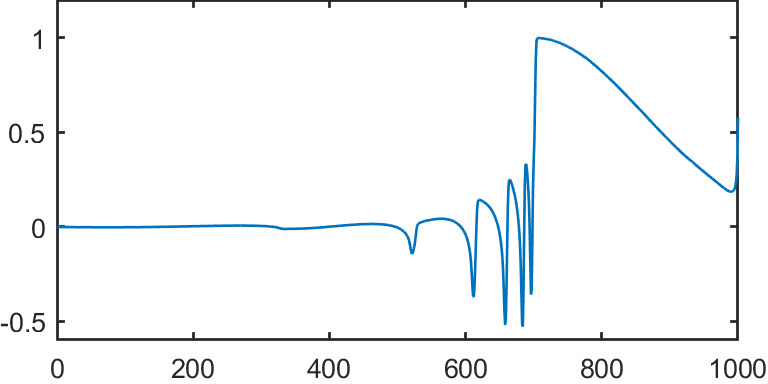}
\caption{A horizontal cross-section plot near the maximum of the jump.}
\end{subfigure}\ \ \ \ \hfill  \ \ \ \ 
\begin{subfigure}[t]{.38\linewidth}
\centering
\includegraphics[height=.15\textheight]{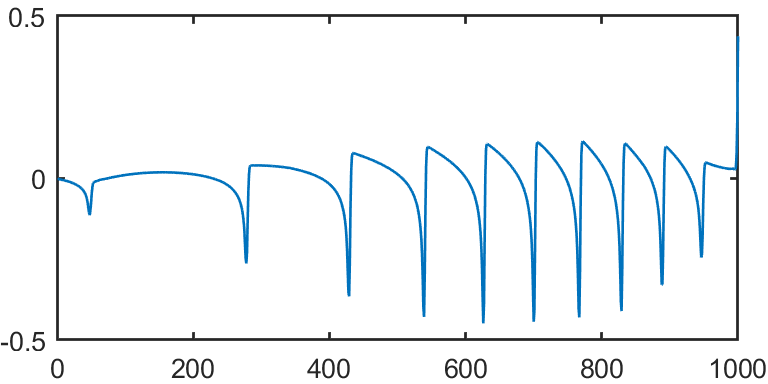}
\caption{A horizontal cross-section plot near the bottom.}
\end{subfigure}\hfill\hfill{}
\caption{A convex edge with $s=10^\circ$ with $-x_-^{-1/2}$ singularities visible in the reconstruction. Near the actual edge, the jump is recovered well but this is due to smooth but sharply changing contributions from close non-tangent lines.
}\label{fig_convex-edge}
\end{figure}
The  $-x_-^{-1/2}$ singularities are well visible.

\subsection{Artifacts from a corner} 
Let $f$ has a jump across a corner, like $f=h_0(x^1)h_0(x^2)$ near $x=0$. Then $\WF(f)$ over the corner consists of all directions, which will create singularities conormal to all lines in our set through this corner. To be more precise, assume that we have two smooth curves through $x_0$, intersecting transversally, so that $f$ is equal to the restriction of a smooth function $f_0$ with $f_0(x_0)\not=0$, to one of the four sectors, and zero in the other three. Assume that $\omega_{j_0}$ is not normal to either of those curves at $x_0$. Then $\RR(\omega_{j_0},p)\sim k  (p-p_0)_+$ locally, $k\not=0$, modulo smoother terms.   By \r{L3} in Lemma~\ref{lemma_H}, $f_\delta$ would have conormal singularities along the line determined by $(\omega_{j_0},p_0)$ of the kind
\[
\frac{k}{2\pi m} \log|p-p_0|.
\]
It is the weakest of the three. 

A numerical illustration is presented in Figure~\ref{fig_corner-edge}. In (d), we  see log type of peaks along a horizontal line staying at 30\% from the bottom. 
\begin{figure}[h!]\hfill 
\begin{subfigure}[t]{.2\linewidth}
\centering
\includegraphics[height=.17\textheight]{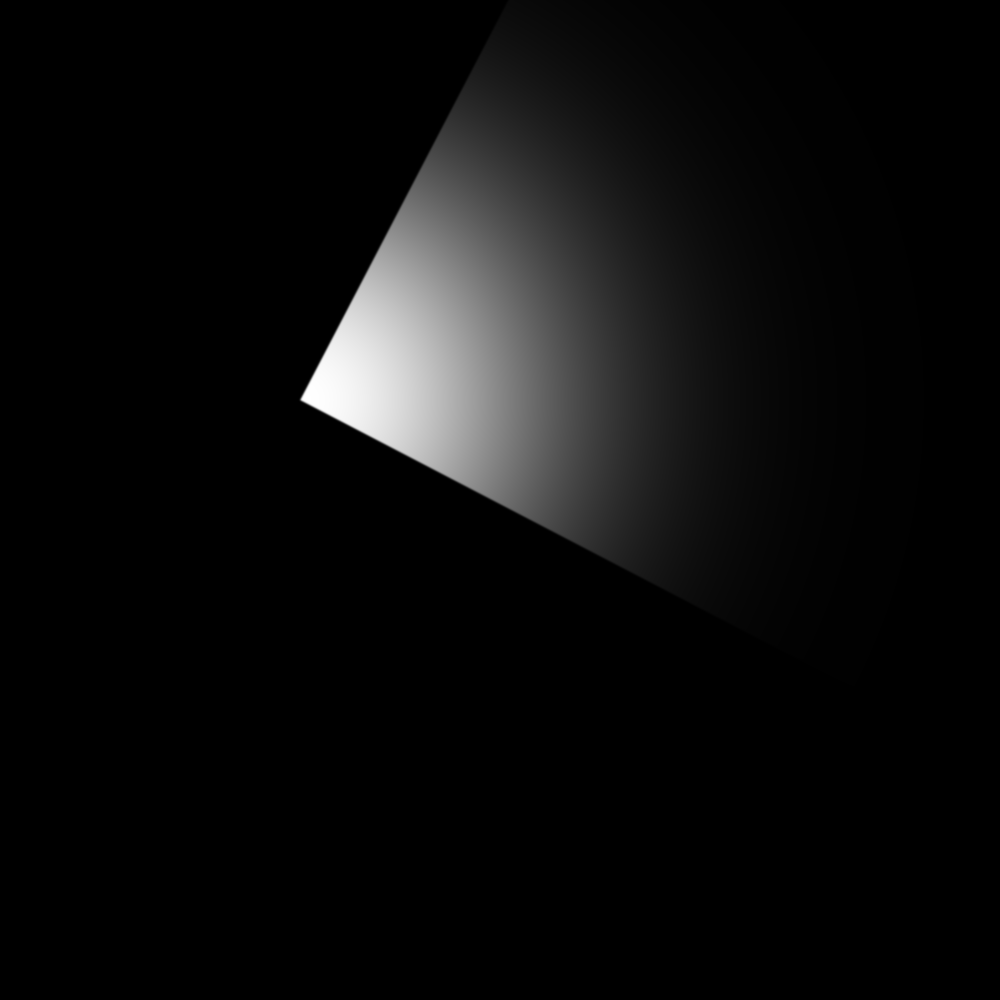}
\caption{A corner}
\end{subfigure}
\hfill
\hfill
\begin{subfigure}[t]{.2\linewidth}
\centering
\includegraphics[height=.17\textheight]{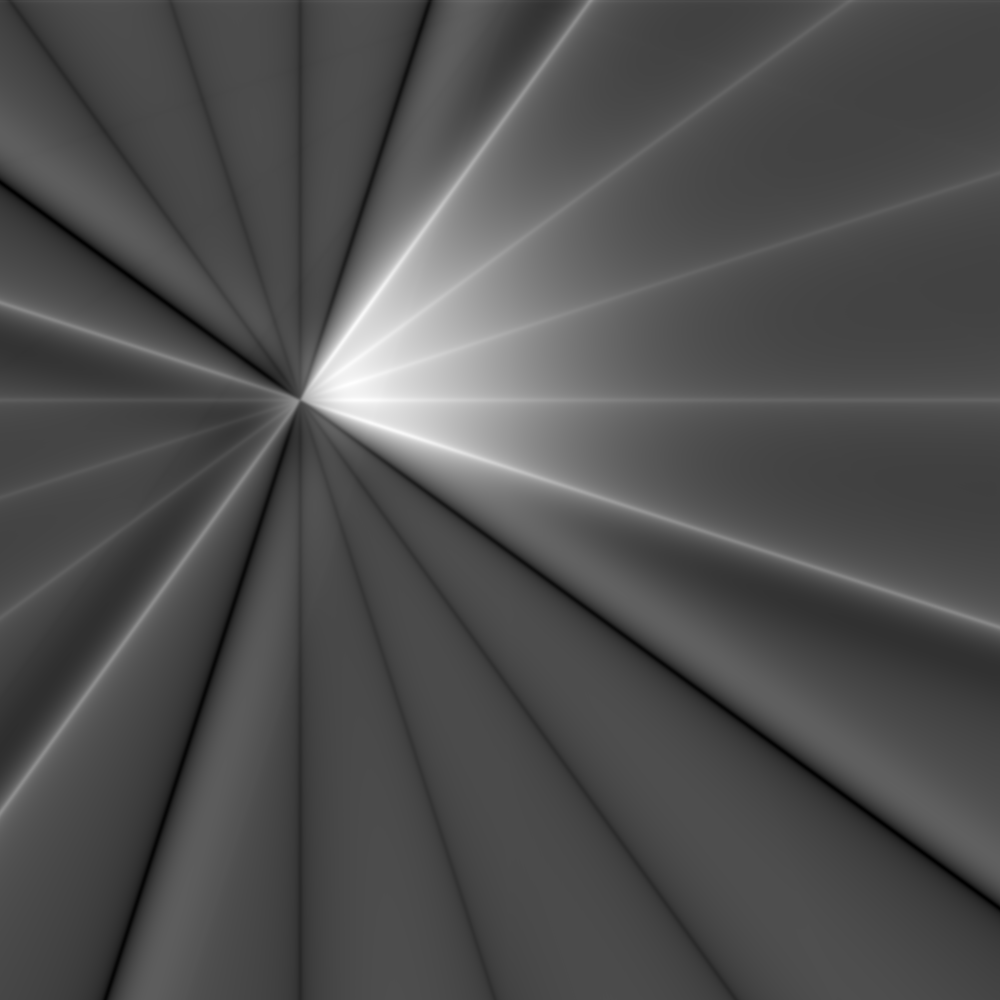}
\caption{Recovered, angular step  $18^\circ$.}
\end{subfigure}\hfill \hspace{0.2in}\,{}\\  
\hfill 
\begin{subfigure}[t]{.38\linewidth}
\centering
\includegraphics[height=.15\textheight]{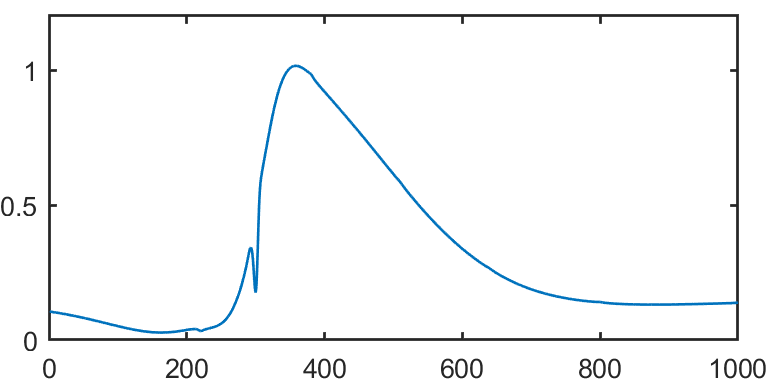}
\caption{A horizontal cross-section plot near the tip.}
\end{subfigure}\ \ \ \ \hfill  \ \ \ \ 
\begin{subfigure}[t]{.38\linewidth}
\centering
\includegraphics[height=.15\textheight]{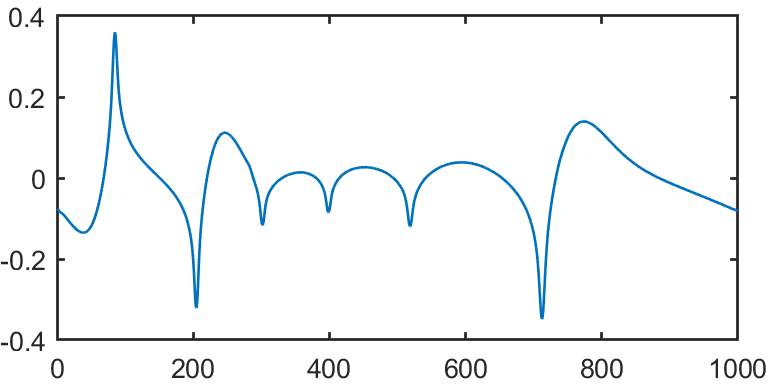}
\caption{A horizontal cross-section plot near the bottom.}
\end{subfigure}\hfill\hfill{}
\caption{A corner with $s=18^\circ$ with $\pm \log|x|$ singularities visible in the reconstruction. Near the actual edge, there is a similar singularity as well plus a blurred version of the  jump, which is one.
}\label{fig_corner-edge}  
\end{figure}
 Most of them point down, corresponding to $+\log|\cdot|$. They correspond to lines through the corner not entering the sector where $f>0$. The most left one corresponds to a line through the corner entering that sector, and the singularity is of the type $-\log|\cdot|$. This explains why that peak points upwards.

We used the following lemma above. 
\begin{lemma}\label{lemma_H}
Let $\phi\in C_0^\infty(\R)$. Then 
\begin{align}
\label{L1}
 \mathcal{H}\phi h_0 (x)&= \frac1{4\pi^2}  \left(\phi(0)\pv \frac{1}{x} +  \phi'(0) \, \log|x|\right) \mod C^0(\R),\\  \label{L2}
  \mathcal{H}\phi x_+ &= \frac1{4\pi^2}   \phi(0) \log|x| \mod C^0(\R),\\  \label{L3}
 \mathcal{H}\phi x_+^\frac12  &= -\frac1{4\pi^2}   \phi(0) x_-^{-\frac12} \mod C^0(\R). 
\end{align}
\end{lemma}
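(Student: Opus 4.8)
The plan is to reduce each of \r{L1}--\r{L3} to the explicit action of the Hilbert transform $H$ on the three model distributions $\delta$, $h_0$, and $x_+^{-1/2}$, using that $\mathcal H=\frac1{4\pi}Hd_p$ and that $H$ changes $C^0$-regularity only at the singular support of its argument. First I would record a localization remark: since $\phi u$ is $C^\infty$ on $\R\setminus 0$ and $H$ is convolution against $\frac1\pi\pv(1/x)$, a kernel smooth away from the origin, both sides of each identity are $C^\infty$ on $\R\setminus 0$; hence the three statements concern only the behaviour at $x=0$, and I may freely discard any term that is continuous near $0$.

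Next I would differentiate the products, using the distributional identities $d_p h_0=\delta$, $d_p x_+=h_0$, $d_p x_+^{1/2}=\tfrac12 x_+^{-1/2}$ (note $x_+$ and $x_+^{1/2}$ are continuous, so no $\delta$ appears), which give
\[
\mathcal H(\phi h_0)=\tfrac1{4\pi}\big(H(\phi'h_0)+\phi(0)H\delta\big),\qquad
\mathcal H(\phi x_+)=\tfrac1{4\pi}\big(H(\phi'x_+)+H(\phi h_0)\big),
\]
\[
\mathcal H(\phi x_+^{1/2})=\tfrac1{4\pi}\big(H(\phi'x_+^{1/2})+\tfrac12 H(\phi x_+^{-1/2})\big).
\]
The three model Hilbert transforms I need, all elementary, are $H\delta=\frac1\pi\pv(1/x)$; $Hh_0=\frac1\pi\log|x|\bmod C^0$ (from $\frac1\pi\pv\int_0^\infty\frac{\d s}{x-s}$ on a bounded interval, or by integrating $\pv(1/x)$); and $H(x_+^{-1/2})=-x_-^{-1/2}$ (for $x<0$ this is the convergent integral $\frac1\pi\int_0^\infty\frac{s^{-1/2}}{x-s}\,\d s=-|x|^{-1/2}$, vanishing for $x>0$; equivalently, $z^{-1/2}$ on the upper half-plane has boundary real part $x_+^{-1/2}$ and imaginary part $-x_-^{-1/2}$). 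Integrating the second once shows $Hx_+\in C^0$, and integrating the third once shows $H(x_+^{1/2})\in C^0$ near $0$.

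It then remains to see that the leftover terms contribute nothing: after inserting a cutoff equal to $1$ near $0$, each leftover is $H$ applied to a compactly supported Hölder-continuous function that vanishes at $0$ (built from $\phi'-\phi'(0)$ or from $\phi-\phi(0)$ times $h_0$, $x_+$, $x_+^{1/2}$, or $x_+^{-1/2}$), and such Hilbert transforms are continuous near $0$ by the Plemelj-type decomposition $Hv(x)=\frac1\pi\pv\int\frac{v(s)-v(x)}{x-s}\,\d s+\frac{v(x)}\pi\pv\int\frac{\d s}{x-s}$ over a bounded interval containing $\supp v$ (equivalently, Privalov's theorem). Substituting the model values above together with the Taylor coefficients $\phi(0)$ and $\phi'(0)$ into the displays, and collecting the resulting multiplicative constants, yields \r{L1}, \r{L2} and \r{L3}.

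The only step that is not routine bookkeeping, I expect, is this last one: because $H$ does not map $C^0$ into $C^0$ in general, one genuinely uses the Hölder continuity (and compact support, to control the tails) of those remainders to conclude their images are continuous across $x=0$; combined with the single nontrivial evaluation $H(x_+^{-1/2})=-x_-^{-1/2}$, this is where the real content of the lemma sits, the rest being the propagation of $d_p$ through the product and arithmetic with $1/(4\pi)$ and $1/\pi$.
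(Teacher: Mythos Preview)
Your approach is correct and genuinely different from the paper's. The paper treats each of the three distributions as a conormal distribution at $x=0$, invokes H\"ormander's symbol calculus (specifically \cite[Theorem~18.2.12]{Hormander3} and the expansion \r{L3a}) to compute the full symbol of $\mathcal H(\phi u)=(4\pi)^{-1}|D|\phi\,u$, and then reads off the leading singular terms by inverting the Fourier transforms of $\sgn\xi$, $\sgn\xi/\xi$, $|\xi|^{-1}$, etc. You instead work entirely in physical space: write $\mathcal H=\frac1{4\pi}H\,\d_p$, push $\d_p$ through the product, evaluate $H$ on the three model singularities $\delta$, $h_0$, $x_+^{-1/2}$, and dispose of the remainders via Privalov/Plemelj. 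Both routes share the one nontrivial evaluation $H(x_+^{-1/2})=-x_-^{-1/2}$; the paper obtains it from the Fourier transform of $x_+^{\lambda-1}$, you from the explicit integral or the boundary value of $z^{-1/2}$.

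What each buys: your argument is self-contained and avoids the conormal-distribution machinery, which is attractive for a lemma of this scope. The paper's symbolic approach is more systematic for extracting further terms in the expansion (it already produces the next term in \r{L3b} with no extra work) and generalizes immediately to other $\Psi$DOs in place of $|D|$.

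Two small remarks on your write-up. First, expressions like $Hh_0$ or $Hx_+$ are not literally defined (these are not in any $L^p$); what you actually use, and should say, is $H(\chi h_0)$, $H(\chi x_+)$ for a cutoff $\chi$ equal to $1$ near $0$, with the tail $(1-\chi)u$ producing a smooth contribution near $0$ since the kernel is smooth there. Second, the ``vanishes at $0$'' hypothesis you impose on the remainders is not needed for the Privalov argument (H\"older $\Rightarrow$ H\"older already gives continuity); it is just how your decomposition is organized.
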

\begin{proof}
The lemma is a computation of a singularity conormal at $x=0$ under the action of the \PDO\ $\mathcal{H}= (4\pi)^{-1}|D|$. The result is given by \cite[Theorem~18.2.12]{Hormander3}. In our case, a (compactly supported) conormal distribution in $\R$ at $x=0$ of order $m$ is given by
\[
u(x) = \frac1{2\pi}\int e^{\i x\xi} a(\xi)\,\d\xi,
\]
i.e., just the inverse Fourier transform of $a$, where $a$ is a symbols of order $m+1/4$. Applying various smooth cutoffs to $u$ which are equal to one near $x=0$ may change the symbol only up to a term of order $-\infty$. If $\phi$ does not satisfy that, the symbol would be modified depending on the Taylor expansion of $\phi$ at zero. 
The distributions in the lemma are not compactly supported before multiplying by $\phi$ but they are homogeneous, thus they have homogeneous Fourier transforms singular at $\xi=0$ only. One can see that a multiplication by $\phi(x)$ would produce a compactly supported conormal distribution with a symbol equal to those Fourier transforms, up to $O(|\xi|^{-\infty})$. 

Applying a \PDO\ $p(x,D)$ of order $m'$ to a conormal distribution of order $m$ in our case produces a conormal distribution at $x=0$,  again of order $m+m'$, with complete symbol
\be{L3a}
\sum \langle -\i D_x,D_\xi\rangle ^j p(x,\xi)a(\xi)/j!|_{x=0},
\ee
see, e.g., \cite{Hormander3}. 
In our case, $p(x,D)=(4\pi)^{-1}|D|\phi(x)$, i.e., it has an amplitude $a(x,y,\xi)= (4\pi)^{-1}|\xi|\phi(y)$. For its symbol $p(x,\xi)$ we have 
\[
4\pi  p(x,\xi)=    \phi(x)|\xi| -\i  \phi'(x)\sgn\xi  .
\]
The symbol of the Heaviside function $h_0$ is $-\i/\xi$ (away from $\xi=0$), therefore, in \r{L3a}, we have
\be{L3aa}
4\pi p(x,\xi)a(\xi) =  -\i \phi(x)\sgn\xi - \phi'(x)\sgn\xi /\xi .
\ee
By \r{L3a}, the symbol of the conormal distribution $4\pi \phi h_0$ then is
\be{L3b}
4\pi p(x,\xi)a(\xi) =  -\i \phi(0)\sgn\xi - \phi'(0)\sgn\xi /\xi + \i\phi''(0) \sgn\xi/\xi^2 \mod S^{-3}.
\ee
Now, $-\i\sgn\xi$ is the symbol (the Fourier transform) of $\pi^{-1}\pv (1/x)$. Next, $-\sgn\xi/\xi$ is the symbol of the distribution with derivative having symbol  $-\i \sgn\xi$, which is $\pi^{-1}\pv (1/x)$. Taking antiderivative of the latter, we get $\pi^{-1}\log|x|$. Finally, $\i \sgn\xi/\xi^2$ is obtained from $-\i\sgn\xi$ by multiplying by $-1/\xi^2$, which corresponds to taking the second antiderivative; hence we get $\pi^{-1}x(\log|x|-1)$, which is a continuous function. The latter also follows from the fact that a symbol $\sim 1/\xi^2$ at $\xi\to\infty$ is $L^1$ there, therefore, its inverse Fourier transform is continuous. By the same argument, the remainder in \r{L3b} produces a $C^1$ function, and one can get a complete singular expansion, in fact. This proves \r{L1} in the lemma. 

Equation~\r{L2} follows in a similar way. The symbol of the conormal distribution $a(\xi) = x_+$ is $-1/\xi^2$, therefore in \r{L3aa} we have 
\[
4\pi p(x,\xi)a(\xi) =  -\phi(x)/|\xi| +\i \phi'(x)\sgn\xi /\xi^2 .
\]
instead. The second term produces a continuous function while the first one, by the calculations, above, would produce a leading term $\pi^{-1}\phi(0)\log|x|$ plus another continuous function.  

For the last identity \r{L3} in the lemma, we need the symbol of $x_+^{1/2}$. Since 
$\mathcal{H}= (4\pi)^{-1} |D| =(4\pi)^{-1} H\, \d/\d x $, dropping the factor $(4\pi)^{-1}$ for a while, we can apply $\d/\d x$ first to study $x_+^{-1/2}$. 
We have 
\[
(x_+^{\lambda-1}) \hat{\ } = \Gamma(\lambda)\left( e^{-\i\pi\lambda/2} \xi_+^{-\lambda} + e^{\i\pi\lambda/2} \xi_-^{-\lambda}\right), \quad \lambda\not\in\mathbb{Z}, 
\]
see, e.g., \cite[Ch.~8.6]{Friedlander1998}.  When $\Re\lambda>0$, $x_+^{\lambda-1}$ is locally integrable and in general, it is defined by analytic extension in $\lambda$. 
Therefore, with $\lambda=1/2$, 
\[
(x_+^{-1/2})\hat{\ } = \sqrt\pi \left( e^{-\i\pi/4} \xi_+^{-1/2} + e^{\i\pi/4} \xi_-^{-1/2}\right).
\]
Then $H$ is a multiplication with $-\i\sgn(\xi)$ 
on the Fourier side, which happens to make sense on $ x_+^{-1/2}$, hence
\[
\begin{split}
(H x_+^{-1/2})\hat{\ } &= -\i \sqrt\pi \left( e^{-\i\pi/4} \xi_+^{-1/2} - e^{\i\pi/4} \xi_-^{-1/2}\right)\\
&=  - \sqrt\pi \left( e^{\i\pi/4} \xi_+^{-1/2} - e^{\i3\pi/4} \xi_-^{-1/2}\right)\\
& = - \sqrt\pi \left( e^{\i\pi/4} \xi_+^{-1/2} + e^{-\i\pi/4} \xi_-^{-1/2}\right) = - (x_-^{-1/2})\hat{\ } .
\end{split}
\]
Therefore, $H x_+^{-1/2} = - x_-^{-1/2}$.

We can use this calculation in \r{L3a}, where $a(\xi)= (x_+^{1/2})\hat{\ } $ to prove \r{L3}. The next (continuous) term as a square root  singularity, which we will not investigate. 
\end{proof}

\section{Recovery of edges, an asymptotic view} 
\subsection{The direct method} 
Recovery of edges will be analyzed here based on Theorems~\ref{thm_1} and \ref{thm_art}. 
In the numerical examples in the previous section, we can see that besides the aliasing artifacts creating specific singularities along the edge, the actual jump looks well recovered. The horizontal profiles there are a Gaussian cut by half by the Heaviside function, which creates a jump of size one. In Figure~\ref{fig_flat-edge}(c), one can see a jump one with $C\pv(1/x)$ added. In Figure~\ref{fig_convex-edge}(c), if we average the $x_-^{-1/2}$ oscillations on the left, the jump is still close to one. Finally, in Figure~\ref{fig_corner-edge}(c), the (weaker) $\log|x|$ singularity is added to a smoothened out cut-off Gaussian with a jump close to one, as well. As explained in that section, removing the predicted singularities, what remains is a continuous function, so the jumps are smoothened out. The reason they appear close to actual jumps in those numerical examples is that the angular step size is not ``too small'' but it is still ''small.'' If we increase it, the jump  do not look well recovered anymore.

The observed effect is better understood, in author's view, asymptotically, as the angular step size tends to zero. As explained earlier, we assume now that $f=f_h$ is a \sc ly band limited function with bound $B$. 

We start with a general observation which we will not formalize as a theorem. Consider a jump type of singularity. Locally, after a change of variables,  it is a multiple of the Heaviside function $h_0(x_2)$ in the $x_2$ variable, modulo lower order terms. To account for the localization, we represent it as  $f= \phi(x)h_0(x_2)$ with some $\phi\in C_0^\infty$. It is convenient to assume that $\phi(x) = \phi_1(x_1) \phi_2(x_2)$. Then 
\[
 \hat f(\xi)  =  \hat\phi_1(\xi_1) \hat \phi_2*  \Big(\pi \delta(\xi_2)-\i. \text{pv}\frac1{\xi_2}\Big)  = \hat\phi_1(\xi_1)\Big(\pi \hat \phi_2 (\xi_2)   -\i\hat\phi_2 *\text{pv}\frac1{\xi_2}\Big) .
 \]
Assuming $f$ smoothened by a convolution with some $\psi_h$ as above, we get 
\be{F2}
\mathcal{F}_h (\psi_h* f)(\xi)  = \hat  \psi(\xi) \hat f(\xi/h) = -\i \hat  \psi(\xi)\hat \phi_1(\xi_1/h) \Big[  \hat \phi_2 (\cdot)  *\text{pv}\frac1{\cdot}\Big](\xi_2/h) + O(h^\infty). 
\ee
The only rays along which this is not $O(h^\infty)$ in a conic neighborhood are the ones parallel to the $\xi_2$ direction. Along them, $\xi_1=0$ and the expression in the brackets has the asymptotic $\sim h/\xi_2$ for $|\xi_2|>1/C$, $\forall C$. With this in mind, \r{F2} is like $-\i h \hat\psi(0,\xi_2)\hat \phi_1(0)\frac1{\xi_2}$  along the axis $\xi_1=0$, which matters the most. The factor $\hat\psi(0,\xi_2)$ plays a role of a low pass filter modeling the effect of averaging the measurements. If its cutoff frequency, call it $B$, satisfies \r{F1}, then there is no aliasing. When it does not, and this is the case we want to understand, there is aliasing as explained earlier. We get artifacts along the line tangent to the curve where the jump occurs, passing to a point where it happens. It is important to note that the number of non-negligible terms in \r{thm_1_eq1}, restricted to $\mathcal{B}(0,R)$ is independent of $h$ and depends on $B$ only. 
 The factor $h$ above shows that the aliasing artifacts decrease as $h$ when $h\to0$.


\textbf{Numerical example.} We take a function jumping from $0$ to $1$ in a slightly smoothened way, across the parabola $x=1.5y^2$ in the plane. Instead of taking $f=h_0(x-1.5y^2)$, we replace the Heaviside function by $h_\lambda(t)= \frac12(1+\erf(\lambda t))$, where $\erf$ is the ``error function'' defined as the normalized antiderivative of the Gaussian $e^{-t^2}$ with $\erf(0)=0$, and $\lim_{t\to\pm\infty}\erf(t)=\pm1$.  Then $h_\lambda$ is the Heaviside function convolved with a highly concentrated Gaussian, as $\lambda\gg1$. Its Fourier transform multiplies $1/(\i\xi)$ (for $\xi\not=0$) by a Gaussian as well. While that multiplier is not compactly supported, for all computational purposes here, it is. This makes the jump function \sc ly band limited with $h$ proportional to $1/\lambda$. Finally, we localize $f=h_\lambda(x-1.5y^2)$ by multiplying by a function of compact support equal to one near the vertex. 

\begin{figure}[h!] \hspace{0.04\textwidth} \hfill 
\centering
\begin{subfigure}[t]{.25\textwidth}
\centering
\includegraphics[height=.172\textheight]{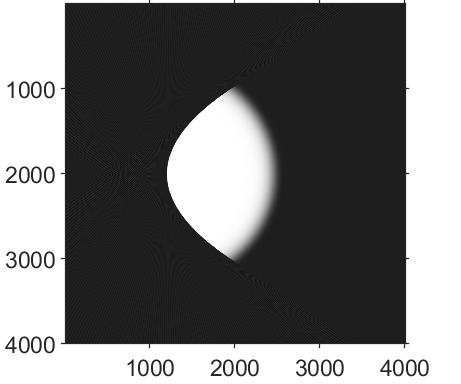}
\caption{A convex edge}
\end{subfigure}
\hfill
\begin{subfigure}[t]{.7\linewidth}
\centering
\includegraphics[height=.19\textheight]{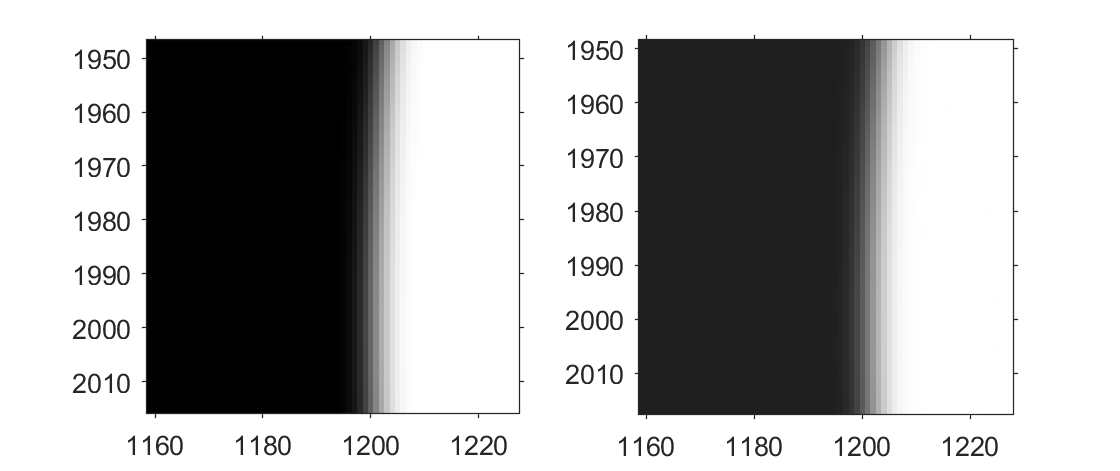}
\caption{Highly zoomed in: original vs.\ recovered, $s=0.8^\circ$.}
\end{subfigure}\\  \vspace{0.02\textheight}
\hfill 
\begin{subfigure}[t]{.38\linewidth}
\centering
\includegraphics[height=.15\textheight]{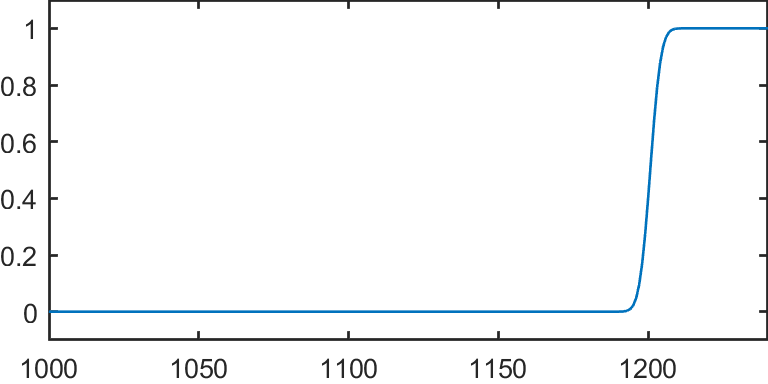}
\caption{A horizontal cross-section plot of the phantom at the vertex over a small part of the horizontal axis.}
\end{subfigure}\ \ \ \ \hfill  \ \ \ \ 
\begin{subfigure}[t]{.38\linewidth}
\centering
\includegraphics[height=.15\textheight]{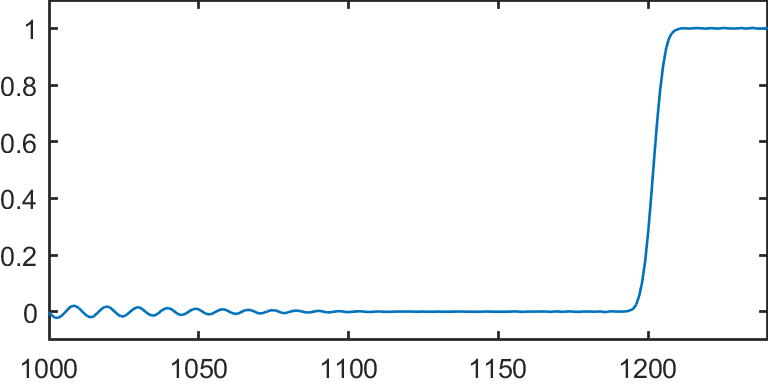}
\caption{A horizontal cross-section plot of the reconstruction at the vertex..}
\end{subfigure}\hfill\hfill{}
\caption{A convex ``\sc\ edge'' with $s=0.8^\circ$. 
}\label{fig_convexSC-edge}
\end{figure}

We take $\lambda=500$. We perform the computations on a $4,000\times 4,000$ grid with an angular step of $0.8^\circ$. The phantom is shown in Figure~\ref{fig_convexSC-edge}(a). The reconstructed one looks virtually the same with the artifacts barely visible,  shown in Figure~\ref{fig_convex_SC-artifacts} on a different scale. We zoom in at the vertex of the hyperbola in Figure~\ref{fig_convexSC-edge}(b) to compare the original phantom and the reconstruction. The squares shown are approximately $70\times 70$ pixel crops of the $4,000\times 4,000$ original and of the $4,002\times 4,002$ recovery, respectively (MATLAB's {\tt iradon} adds a pixel on each side if the output size is not specified). In Figure~\ref{fig_convexSC-edge}(c)(d), we show plots of horizontal cross-sections of the edge through the vertex, well stretched compared to (a), with  6\% of the total cross-section plotted. The edge is very will recovered, and the artifacts (the low amplitude oscillations) are separated from the edge at a distance controlled by the effective band limit of $f$. 

Finally, in Figure~\ref{fig_convex_SC-artifacts}, we show an approximately $525\times 525$ crop, zoomed in, of the vertex area rendered to the range of values $[-0.1, 0.1]$ (the original one is $[0,1]$) to emphasize on the artifacts.  We see that in some neighborhood of the edge, there are no artifacts. This is consistent with the cross-section plot in Figure~\ref{fig_convexSC-edge}(c), and with Figure~\ref{fig_2}, right, see also Remark~\ref{rem_4.2}. 

\begin{figure}[h!] 
  \centering
	\includegraphics[ scale=0.3]{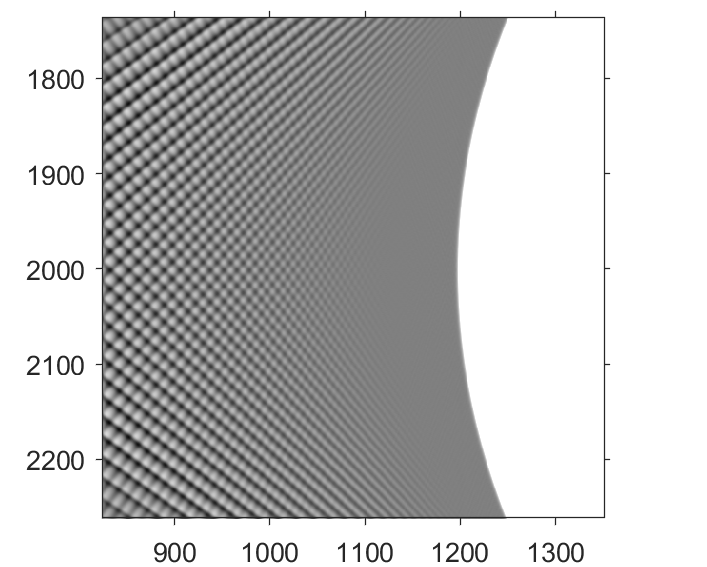}
\caption{\small A zoomed in $525\times 525$ crop of the reconstruction in Figure~\ref{fig_convexSC-edge}, with range $[-0.1,0.1]$, clipping all values $\ge0.1$. A neighborhood of the edge is free of artifacts.}
\label{fig_convex_SC-artifacts}
\end{figure}

Compared to the situation on section~\ref{sec-classical}, we have an artifact free neighborhood (in the case of convex edges), and the \sc\ singularities not extending too far from a point. Also, those are \sc\ singularities, high oscillations instead of being classical one. 

\subsection{The interpolation method} 
%
%

We comment briefly on the recovery of edges with the interpolation method using Theorem~\ref{thm_2}. 
In Figure~\ref{fig_disk}, we present a numerical example with a characteristic function, slightly blurred, of a disk placed off center. The conversion in (b) is the direct one. The artifacts are separated from the edge and extend everywhere. 
\begin{figure}[h!]\,\hfill
\begin{subfigure}[t]{.2\linewidth}
\centering
\includegraphics[height=.15\textheight]{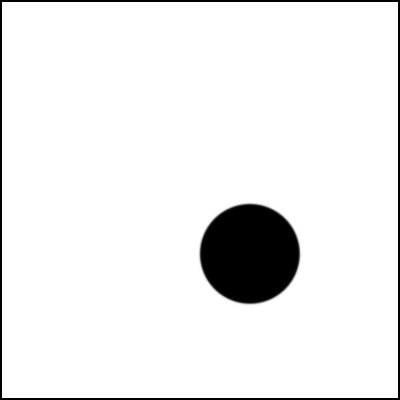}
\caption{The original $f$}
\end{subfigure}\hfill
\begin{subfigure}[t]{.2\linewidth}
\includegraphics[height=.15\textheight]{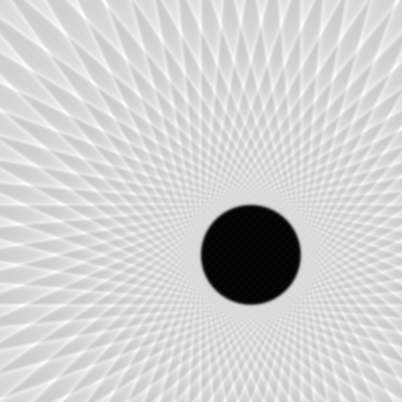}
\caption{Direct inversion with a $5^\circ$ step}
\end{subfigure}
\begin{subfigure}[t]{.2\linewidth}
\end{subfigure}\hfill
\begin{subfigure}[t]{.2\linewidth}
\includegraphics[height=.15\textheight]{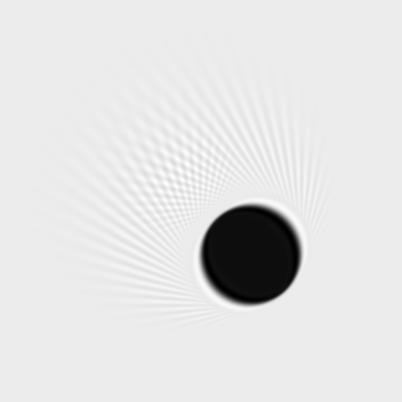}
\caption{Interpolated inversion with a $5^\circ$ step}
\end{subfigure}\hfill
\begin{subfigure}[t]{.2\linewidth}
\includegraphics[height=.15\textheight]{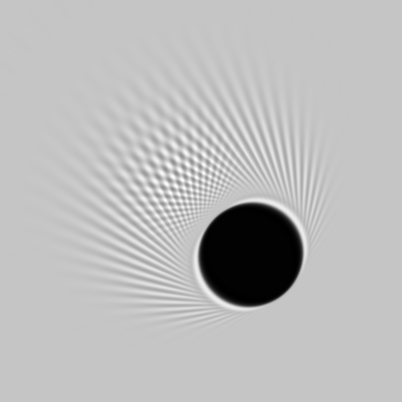}
\caption{As (c) with contrast enhanced}
\end{subfigure}\hfill {}
\caption{A function $f$ with a jump type singularity and its recovered version from $\mathcal{R}f$ sampled with a $5^\circ$ step (top row) and a $3^\circ$ step (bottom row). The origin is in the center. 
}\label{fig_disk}
\end{figure}
The reconstruction in (c) is the interpolated one, and the aliasing artifacts are localized in accordance with Theorem~\ref{thm_2} and Figure~\ref{fig_1}.   In (c), we plot a version with an enhanced contrast. 

Note that in (c) and in (d), parts of the edge are more blurred than the original, and some oscillations (Gibb's like effect) are visible. The explanation is that when an artifact is created, that frequency is removed from the edge, since for each of them, only one $k$ is possible in \r{CR}, \r{RA2}. We used the Lanczos-3 interpolation here, which has an oscillating kernel. This, and Theorem~\ref{thm_conv} explain why those edges have oscillations when reconstructed. The effect is stronger for the edges with tangents passing through the origin since they would be most affected by the angular convolution.

\appendix
\section{Semiclassical sampling} \label{sec_appendix}
We summarize some of the results in \cite{S-Sampling}. 

\subsection{Elements of \sc\ analysis} 
Our reference for \sc\ analysis is \cite{Zworski_book}. We consider functions $f_h(x)$ depending on $x\in\R^n$ and on a small parameter $h>0$ as well. Often, we suppress the dependence on $h$ and just write $f_h=f$. 
The \sc\ Fourier transform $\mathcal{F}_hf(\xi) = \hat f(\xi/h)$ is just a rescaled classical one. 
We restrict our attention here to functions called \textit{localized in phase space} in \cite{Zworski_book}, and \textit{\sc ly band-limited} in \cite{S-Sampling}. Those are functions $f_h$ with the following properties. Each one is (i) supported in an $h$-independent compact set, (ii) is tempered (the $H^s$ norm is polynomially bounded in $h^{-s}$ for some $s$), and (iii) there exists a compact set $\mathbf{B}\subset \R^n$, so that for every open $U\supset \mathbf{B}$, we have $|\mathcal{F}_hf(\xi) | \le C_Nh^N \langle\xi\rangle^{-N}$, $\forall N$. Then we say that the set $\mathbf{B}$ is the band limit of $f$.  Depending on the application,  when the latter is the ball $\mathcal{B}(0,B)$ for some $B>0$, then $B$ is called a band limit or when it is the square $[-B, B]^n$, then $B$ is the band limit. Note that the notion of band limit depends on the coordinate system but then the sampling geometry does as well. 

Such functions belong to $C_0^\infty(\R^n)$ for every $h$ but they can oscillate highly when $h\to0$. The \sc\ wave front set $\WFH(f)$ is the set of points $x$ and co-directions $\xi$ defined as the complement of those $(x_0,\xi_0)$ for which there is $\phi\in C_0^\infty(\R^n)$ with $\phi(x_0)\not=0$ so that $\mathcal{F}_h(\phi f) = O(h^\infty)$. We call the points $(x,\xi)\in \WFH(f)$ \sc\ singularities. The \sc\ wave front set is not conic in general. The projection of $\WFH(f)$ onto the dual variable $\xi$ is called the frequency set $\Sigma_h(f)$. It is, in fact, the smallest band limit $\mathbf{B}$. 

Semiclassical \PDO s are defined as
\be{hPDO}
Pf(x) = (2\pi h)^{-n}\iint e^{\i (x-y)\cdot\xi/h} p(x,\xi) f(y)\,\d y\, \d\xi,
\ee
where, for every compact set $K$ and $\alpha$, $\beta$,  the symbol $p(x,\xi)$, possibly depending on $h$ as well, satisfies
\be{hpdo1}
|\partial_x^\alpha \partial_\xi^\beta p(x,\xi)|\le C_{K,\alpha,\beta}h^k\langle \xi\rangle^{m-|\beta|}
\ee
for some $k$ and $m$. 
Acting on \sc ly band limited functions with a fixed band limit, one can just take a compactly supported $p$, so the decay in $\xi$ above would be automatic. (Local) semiclassical Fourier Integral Operators (FIOs) are defined similarly but with a phase function $\phi(x,y,\xi)$ satisfying some conditions, see \cite{Guillemin-SC}, \cite{Martinez_book}.

\subsection{Semiclassical sampling} The \sc\ sampling theory developed in \cite{S-Sampling} is an asymptotic version of the classical one. For a \sc ly limited function with $\Sigma_h(f)\subset [-B_1,B_1]\times\dots\times [-B_n,B_n]$, it is enough to know its samples (which number is $O(h^{-n})$) on a uniform rectangular grid  of side  $s_jh_j$ in each direction, with $s_j<\pi/B_j$, in order to recover $f$ up to an $O(h^\infty)$ error. The reconstruction formula is of interpolation type \r{P1}, 
so that $\chi(x)= \chi_1(x_1)\dots \chi_n(x_n)$,  $\hat\chi_j\subset (-\pi,\pi)^n$, and $\hat \chi_j(\pi \xi_j/B_j)=1$ for $\xi\in \Sigma_h(f) $, under the condition $0<s_j<\pi/B_j$. 

If $A$ is an FIO (classical), and $f$ is as above, one can determine the smallest box where $\WFH(Af)$ is contained by studying the canonical relation of $A$. In particular, this applies to $\RR$ and to its inverse $\RR^{-1}$. This allows us to compare the sharp sampling requirements for $f$ and $Af$ (and for $A^{-1}f$ if $A$ is elliptic, associated to a local diffeomorphism, like $\RR$). 


\subsection{Aliasing} 
If the Nyquist condition $s_j <\pi/B_j$ is not satisfied, aliasing occurs. For simplicity, assume all $B_j$ equal (can be done by a linear transformation). 
As in the classical case, frequencies ``fold'' over the Nyquist box. The interpolation formula approximates not $f_h$ but 
\be{A2}
Gf := \mathcal{F}_h^{-1} \hat\chi(s\cdot)\sum_{k\in\mathbb{Z}^n}  \mathcal{F}_h f_h(\cdot+2\pi k/s). 
\ee
When there is a non-trivial contribution from $k\not=0$, we get aliasing artifacts.

Writing $ G=\sum_{k\in\mathbf{Z}} G_k$, we get that each $G_k$ is an h-FIO with a canonical relation given by the shifts 
\be{sh}
S_k:(x,\xi) \longmapsto    (x,\xi+{2\pi} k/s).
\ee
This FIO preserves the space localization (as it is clear from \r{sh}) but shifts the frequencies, which can be viewed as changing the direction and the magnitudes of the latter. We identify in this paper canonical relations with the maps they induce. 

Assume now that $A$ is elliptic, associated to a local diffeomorphism $C$, like $\RR$. Assume that the measurement  $Af$ is aliased, and we apply the parametrix $A^{-1}$.  Then the inversion would be $A^{-1}G_kA$; and by the h-FIO calculus, away from zero frequencies, that is an h-FIO with a canonical relation  $C^{-1}\circ S_k\circ C$ acting on $(x,\xi)\in\supp\hat \chi(s\cdot+2k\pi) $. The classical aliasing   creates artifacts at the same location but with shifted frequencies. The artifacts here however could move to different locations, as it happens for the Radon transform.

\subsection{Sampling on the unit circle} 
The circle is a manifold, with no unique chart possible (but two suffice). The definition of a band limit is not invariant under coordinate changes but is invariant under rigid motions, so it requires some clarification what it means on the unit circle. 

Let $f$ be a function on the unit circle. 
We can think of it as a function of the polar angle $\varphi$, periodic with period $2\pi$. The natural Fourier transform is an expansion in Fourier series. 
On the other hand, there are natural coordinate maps on the unit circle preserving the arc-length. We can remove any fixed point  $x_0$ from it, say having a polar angle  $\varphi_0 \mod 2\pi$ and map the rest to $(\varphi_0,\varphi_0+2\pi)$ by the polar angle. 
Given a distribution  $f$ on $S^1$, depending on $h$, we can localize it to that chart by a smooth cut-off $\chi$. 

\begin{definition}\label{def_circle1}
If $\chi f$ is \sc ly band limited for every such chart, we call $f$ \sc ly band limited with band limit $B$ being the supremum of the band limit over all such charts.
\end{definition}

\begin{lemma}\label{lemma_polar}
The supremum $B$ in Definition~\ref{def_circle1} is finite. Moreover, $B= \max(B_1,B_2)$, where $B_1$, $B_2$ are two such band limits for two charts corresponding to two distinct cut-off  points, and the corresponding $\chi_1$, $\chi_2$ form a partition of unity. 
\end{lemma}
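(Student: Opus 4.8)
The plan is to show two things: that the supremum defining $B$ in Definition~\ref{def_circle1} is attained and finite, and that it equals $\max(B_1,B_2)$ for any fixed choice of two charts whose cut-offs partition unity. The key mechanism is that multiplication by a fixed smooth compactly supported cut-off is a (trivial) semiclassical pseudodifferential operator of order $0$ with symbol independent of $\xi$ at large $\xi$, and such operators cannot enlarge the frequency set: if $g$ is \sc ly band limited with band limit $B'$, and $\rho\in C_0^\infty$, then $\rho g$ is \sc ly band limited with band limit at most $B'$. This is standard and follows, e.g., from the composition calculus in \cite{Zworski_book} or directly from estimating $\mathcal{F}_h(\rho g)(\xi) = (2\pi h)^{-1}\int \hat\rho((\xi-\eta)/h)\,\mathcal{F}_h g(\eta)\,\d\eta/(2\pi)$ and using the rapid decay of $\hat\rho$ to localize $\eta$ near $\xi$.

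First I would fix two distinct cut-off points $x_1,x_2$ on $S^1$ with associated charts and cut-offs $\chi_1,\chi_2$ forming a partition of unity, $\chi_1+\chi_2\equiv 1$, and let $B_i$ be the band limit of $\chi_i f$ in chart $i$. Both are finite by the assumption that $f$ is \sc ly band limited in the sense of Definition~\ref{def_circle1} (applied to these two particular charts). Set $B_0:=\max(B_1,B_2)$. Now take an arbitrary chart, removing a point $x_0$, with cut-off $\chi$ supported away from $x_0$; I must bound the band limit of $\chi f$ there by $B_0$. Write $\chi f = \chi\chi_1 f + \chi\chi_2 f$. On the overlap of the new chart with chart $i$, the arc-length coordinate of the new chart differs from that of chart $i$ by an additive constant (a translation), modulo the $2\pi$ identification; translations preserve band limits exactly, and on the support of $\chi\chi_i$ we are genuinely inside chart $i$ (since $\chi$ kills a neighborhood of $x_0$ and $\chi_i$ kills a neighborhood of $x_i$, one can check the supports lie in a common chart — this is where the partition-of-unity hypothesis and the choice of two \emph{distinct} points is used). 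Hence $\chi\chi_i f$, viewed in the new chart, is $\rho\cdot(\text{translate of }\chi_i f)$ for some $\rho\in C_0^\infty$, so its band limit is at most $B_i\le B_0$. Adding the two pieces, the band limit of $\chi f$ is at most $B_0$. This shows the supremum over all charts is $\le B_0$; the reverse inequality is trivial since charts $1$ and $2$ are among those considered. Therefore $B=B_0=\max(B_1,B_2)<\infty$.

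The step I expect to require the most care is the support bookkeeping: verifying that $\chi\chi_i f$ really is supported inside a single chart so that the ``translation of a band-limited function'' argument applies cleanly, and handling the case where the new cut-off point $x_0$ coincides with $x_1$ or $x_2$ or lies in an awkward position relative to them. In the degenerate configurations one may need to first pass to a third auxiliary chart, or subdivide $\chi$ further using a finer partition of unity subordinate to chart $1$, chart $2$, and their common overlaps; but since $S^1$ is compact and two charts already cover it, finitely many such pieces suffice, and each piece transplants into chart $1$ or chart $2$ by a rigid motion. The remaining ingredient — invariance of the band limit under translation of the coordinate, and its non-increase under multiplication by a smooth cut-off — is routine from the \sc\ calculus and can be cited. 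Once these are in place the identity $B=\max(B_1,B_2)$ and its finiteness follow immediately.
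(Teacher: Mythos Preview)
Your approach is essentially the same as the paper's: decompose $\chi f=\chi\chi_1 f+\chi\chi_2 f$, use that arc-length charts differ by translations (which preserve the band limit), and that multiplication by a smooth cut-off cannot increase it. The paper handles the support bookkeeping you flag by splitting $\chi\chi_i f$ into the two arcs between $x_0$ and $x_i$, each of which is a single translate in the target chart (possibly by a different multiple of $2\pi$); with that refinement your argument matches the paper's.
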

\begin{proof}
For every distribution on $S^1$, we can write $f=\chi_1 f+ \chi_2 f$. Let $x_0\in S^1$ with a polar angle $\varphi_0\mod 2\pi$ be a cut-off point for a local chart. Let $\chi_0\in C^\infty(S^1)$ be zero near $x_0$. Then $\chi_0 f = \chi_0\chi_1 f+ \chi_0\chi_2 f$. The term $\chi_0\chi_1 f$ can be written as a sum of two functions: one supported between $x_0$ and $x_1$ (going in positive direction along the circle), and the other one supported between $x_1$ and $x_0$. They both can be re-mapped to the chart associated with $x_1$ at the expense of possible shifting by $2\pi k$, $k\in\mathbb{N}$. That shift does not change the \sc\ band limit, and a multiplication by a $C_0^\infty$ function cannot make it greater; therefore, the \sc\ band limit of $\chi_0\chi_1 f$ does not exceed $B_1$. We analyze  $\chi_0\chi_2 f$ in the same way to get an upper bound $B_2$. Therefore, an upper bound is  $B= \max(B_1,B_2)$ but since it is attained for either $\chi_1 f$ or $\chi_2 f$, it is actually the least one. \mar{explain}
\end{proof}

\begin{definition}\label{def_circle2}
The function $f_h\in C^\infty(S^1)$ is called \sc ly band limited with band limit $B$, if (i) it is tempered, i.e., $\|f_h\|_{L^2(S^1)}\le Ch^{-N}$ for some $N$, (ii)  and for its Fourier coefficients $f_n$, for each $B'>B$,  we have 
\be{A10}
|f_n|\le C_N   |n|^{-N} , \quad |n|>B'/h.
\ee
\end{definition}

\begin{proposition}
Definition~\ref{def_circle1} and Definition~\ref{def_circle2} are equivalent. 
\end{proposition}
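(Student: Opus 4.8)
The plan is to show the two characterizations of semiclassical band-limitedness on $S^1$ agree by passing between Fourier coefficients (the intrinsic/periodic picture of Definition~\ref{def_circle2}) and the localized chart picture (Definition~\ref{def_circle1}), using Lemma~\ref{lemma_polar} to reduce the chart side to a single chart plus a partition of unity. First I would prove the implication Definition~\ref{def_circle2} $\Rightarrow$ Definition~\ref{def_circle1}: assume $f_h$ has Fourier coefficients $f_n$ decaying as in \r{A10}, and let $\chi\in C_0^\infty$ be a cut-off supported in a chart, which we view after the arc-length identification as a compactly supported function of $\varphi\in\R$. Writing $f_h(\varphi)=\sum_n f_n e^{\i n\varphi}$, the localized function is $\chi f_h = \sum_n f_n\,\chi(\varphi)e^{\i n\varphi}$, whose (classical) Fourier transform is $\sum_n f_n\,\hat\chi(\eta-n)$. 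Splitting the sum at $|n|\le B'/h$ and $|n|>B'/h$: the low part is a finite (O($h^{-1}$)) sum of rapidly decaying bumps $\hat\chi$ centered in $|n|\le B'/h$, contributing nothing outside any neighborhood of $[-B',B']$ up to $O(h^\infty)$ once rescaled by $h$; the high part is controlled by \r{A10} together with the Schwartz decay of $\hat\chi$ and a standard convolution-tail estimate. Rescaling $\eta=\xi/h$ then gives $|\mathcal{F}_h(\chi f_h)(\xi)|\le C_N h^N\langle\xi\rangle^{-N}$ off any open set containing $[-B',B']$, for every $B'>B$; temperedness is immediate from $\|f_h\|_{L^2}$. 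Since this holds for every chart, $f_h$ is semiclassically band-limited with band limit $\le B$ in the sense of Definition~\ref{def_circle1}.

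For the converse, Definition~\ref{def_circle1} $\Rightarrow$ Definition~\ref{def_circle2}, I would use Lemma~\ref{lemma_polar}: fix a partition of unity $\chi_1+\chi_2=1$ subordinate to two charts, so $f_h=\chi_1 f_h+\chi_2 f_h$ with each $\chi_j f_h$ semiclassically band-limited (as a compactly supported function on $\R$ after the arc-length map) with band limit $\le B$. The Fourier coefficient $f_n$ equals $\frac1{2\pi}\int_{S^1} f_h(\varphi)e^{-\i n\varphi}\,\d\varphi = \frac1{2\pi}\big(\widehat{\chi_1 f_h}(n)+\widehat{\chi_2 f_h}(n)\big)$, where these are the ordinary Fourier transforms of the extended-by-zero chart functions evaluated at the integer frequency $n$. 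For $|n|>B'/h$ with $B'>B$, each term is $O_N(h^N\langle n\rangle^{-N})$ by the band-limit hypothesis on $\chi_j f_h$ (this is exactly property (iii) of semiclassical band-limitedness, read at $\xi=hn$); hence $|f_n|\le C_N |n|^{-N}$ for $|n|>B'/h$, and temperedness transfers trivially. This gives Definition~\ref{def_circle2} with the same $B$.

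The one genuinely delicate point — and the step I expect to be the main obstacle — is the bookkeeping in the first implication showing that the \emph{low-frequency} part $\sum_{|n|\le B'/h} f_n\hat\chi(\eta-n)$ really is $O(h^\infty)$ outside a neighborhood of $[-B',B']$ after semiclassical rescaling. The subtlety is that there are $O(h^{-1})$ terms, each of size possibly as large as $\|f_h\|_{L^2}=O(h^{-N})$, so one cannot be cavalier: one needs the Schwartz decay of $\hat\chi$ to beat both the number of terms and their polynomial growth, uniformly for $\xi=h\eta$ bounded away from $[-B',B']$ but possibly large. The clean way is to note that for such $\xi$ and any $n$ with $|n|\le B'/h$ one has $|\eta-n|\ge c\,|\eta|$ for $|\eta|$ large and $|\eta-n|\gtrsim h^{-1}\dist(\xi/h,[-B'/h,B'/h])\gtrsim h^{-1}$ always, so each $|\hat\chi(\eta-n)|\le C_M\langle\eta-n\rangle^{-M}\le C_M h^{M'}\langle\eta\rangle^{-M''}$ with $M'$ as large as we like; summing over the $O(h^{-1})$ indices and absorbing the polynomial factors then yields the required bound. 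A brief remark that the band limit obtained is the same on both sides (no loss) — because the estimates in both directions hold for \emph{every} $B'>B$ and fail for no admissible value — finishes the equivalence.
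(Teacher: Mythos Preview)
Your proof is correct and follows essentially the same route as the paper's: both directions hinge on writing the Fourier coefficients as the (semiclassical) Fourier transform of a localized piece evaluated at $\xi=hn$ via the partition of unity from Lemma~\ref{lemma_polar}, and on splitting the Fourier series into low/high index parts with the Schwartz decay of $\hat\chi$ controlling the low part --- the paper just does the two implications in the opposite order and uses two thresholds $B'>B''>B$ where you use one. One small slip: in your converse direction the bound from property~(iii) read at $\xi=hn$ is $O_N(h^N\langle hn\rangle^{-N})$, not $O_N(h^N\langle n\rangle^{-N})$; after cancellation for $|n|>B'/h$ this still yields your (correct) conclusion $|f_n|\le C_N|n|^{-N}$.
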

\begin{proof}
Let $f$ be a \sc ly band limited with a band limit $B$, according to Definition~\ref{def_circle1}. Since $\chi f$ is tempered for any cutoff $\chi$ as in Definition~\ref{def_circle1}, we deduce that $f$ is tempered, too. 
The Fourier coefficients of $f$ are given by
\[
f_n = \int_0^{2\pi} e^{-\i n\varphi}f(\omega(\varphi))\,\d\varphi.
\]
We view the integration above as an integration over $S^1$ since $e^{-\i n\varphi}$ is $2\pi$-periodic. Then we apply the partition of unity $1=\chi_1+\chi_2$ to $f$ as in Lemma~\ref{lemma_polar}. The integral of each term resulting from that can be written as an integral over a subinterval of the real line. It is enough to consider the first one only. We have
\[
\int_{\varphi_1}^{\varphi_1+ 2\pi} e^{-\i \varphi (hn)/h}(\chi_1f)(\omega(\varphi))\,\d\varphi.
\]
This is the \sc\ Fourier transform of $\chi_1 f$ evaluated at $\hat\varphi=hn$. It is $O(h^N\langle \hat\varphi\rangle^{-N})$ for every $N$, and for $|\hat\varphi|>B'>B$, which implies $O(h^N(1+h|n|)^{-N})$, hence  $O(1+|n|)^{-N}$ for $|n|>B'/h$. 

Assume Definition~\ref{def_circle2} now. Then $f$ is tempered and we have \r{A10}. Write
\[
f(\varphi) =\frac1{2\pi} \sum e^{\i n\varphi} f_n.
\]
For $\chi\in C_0^\infty$, 
\be{A11}
\mathcal{F}_h\chi f(\hat \varphi) = \frac1{2\pi} \sum_{n=-\infty}^\infty   f_n \hat\chi(\hat \varphi/h-n).
\ee
Choose $B'>B''>B$ and restrict $\hat \varphi$ to $|\hat \varphi|>B'$.  Notice first that 
\be{A11a}
|\hat\chi(\hat \varphi/h-n)|\le C_N| \hat \varphi/h-n |^{-N} = C_N h^N |\hat \varphi- hn|^{-N}.
\ee
Summing over $|n|\le B''/h$ in \r{A11}, we get
\be{A12}
\Big| \sum_{|n|\le B''/h}  f_n \hat\chi(\hat \varphi/h-n)\Big| \le C_N'h^N|\hat\varphi |^{-N} \quad  \text{for $|\hat \varphi|>B'$},
\ee
where we used \r{A11a}, and the fact that the number of terms above is $O(h^{-1})$. For the remainder of the sum, we have
\be{A13}
\Big| \sum_{|n|> B''/h}  f_n \hat\chi(\hat \varphi/h-n)\Big| \le C_N \sum_{|n|> B''/h}  |n|^{-N-2} |\hat\chi(\hat \varphi/h-n)|. 
\ee
We want to show that it is $O((h/|\varphi|)^N)$, $\forall N$. We will multiply by $(\hat \varphi/h)^N = \big((\hat \varphi/h-n) +n\big)^N$ and show that it is uniformly bounded.. Using the binomial formula, we just need to show that multiplying \r{A13} by $n^{N-k}(\hat \varphi/h-n)^k$, $0\le k\le N$, leaves it uniformly bounded. Since $\hat\chi$ is Schwartz class, it is enough to estimate
\[
\sum_{|n|> B''/h} | n|^{-N-2}|n|^{N-k}  = \sum_{|n|> B''/h}  |n|^{-2} |n|^{-k} \le C. 
\]
Therefore, \r{A13} is $O((h/|\varphi|)^N)$, indeed. This, combined with \r{A12} shows the same for $\mathcal{F}_j\chi f$ for $|\hat\varphi|>B'$, for every fixed $\chi\in C_0^\infty$.
\end{proof}

Finally, we will mention that on the circle, the sinc interpolation of classically band-limited functions on it (trigonometric polynomials) has its analog as well, see \cite{Epstein-book}.


\begin{thebibliography}{10}

\bibitem{Antoniano-Uhlmann}
J.~L. Antoniano and G.~A. Uhlmann.
\newblock A functional calculus for a class of pseudodifferential operators
  with singular symbols.
\newblock In {\em Pseudodifferential operators and applications ({N}otre
  {D}ame, {I}nd., 1984)}, volume~43 of {\em Proc. Sympos. Pure Math.}, pages
  5--16. Amer. Math. Soc., Providence, RI, 1985.

\bibitem{BorgFJSQ}
L.~Borg, J.~Frikel, J.~S. J\o~rgensen, and E.~T. Quinto.
\newblock Analyzing reconstruction artifacts from arbitrary incomplete {X}-ray
  {CT} data.
\newblock {\em SIAM J. Imaging Sci.}, 11(4):2786--2814, 2018.

\bibitem{Cormack_78}
A.~M. Cormack.
\newblock Sampling the {R}adon transform with beams of finite width.
\newblock {\em Physics in Medicine and Biology}, 23(6):1141--1148, {N}ov 1978.

\bibitem{Epstein-book}
C.~L. Epstein.
\newblock {\em Introduction to the mathematics of medical imaging}.
\newblock Society for Industrial and Applied Mathematics (SIAM), Philadelphia,
  PA, second edition, 2008.

\bibitem{Friedlander1998}
F.~G. Friedlander and M.~S. Joshi.
\newblock {\em Introduction to the Theory of Distributions}.
\newblock Cambridge University Press, 1998.

\bibitem{Frikel-Quinto}
J.~Frikel and E.~T. Quinto.
\newblock Characterization and reduction of artifacts in limited angle
  tomography.
\newblock {\em Inverse Problems}, 29(12):125007, 21, 2013.

\bibitem{Guillemin-SC}
V.~Guillemin and S.~Sternberg.
\newblock {\em Semi-classical analysis}.
\newblock International Press, Boston, MA, 2013.

\bibitem{Guillemin-Uhlmann-81}
V.~Guillemin and G.~Uhlmann.
\newblock Oscillatory integrals with singular symbols.
\newblock {\em Duke Math. J.}, 48(1):251--267, 1981.

\bibitem{Helgason-Radon}
S.~Helgason.
\newblock {\em The {R}adon transform}, volume~5 of {\em Progress in
  Mathematics}.
\newblock Birkh\"auser Boston Inc., Boston, MA, second edition, 1999.

\bibitem{Hormander1}
L.~H{\"o}rmander.
\newblock {\em The analysis of linear partial differential operators. {I}},
  volume 256.
\newblock Springer-Verlag, Berlin, 1983.
\newblock Distribution theory and Fourier analysis.

\bibitem{Hormander3}
L.~H{\"o}rmander.
\newblock {\em The analysis of linear partial differential operators. {III}},
  volume 274.
\newblock Springer-Verlag, Berlin, 1985.
\newblock Pseudodifferential operators.

\bibitem{Katsevich_17}
A.~Katsevich.
\newblock A local approach to resolution analysis of image reconstruction in
  tomography.
\newblock {\em SIAM J. Appl. Math.}, 77(5):1706--1732, 2017.

\bibitem{alex2020analysis}
A.~Katsevich.
\newblock Analysis of resolution of tomographic-type reconstruction from
  discrete data for a class of distributions, 2020.

\bibitem{Katsevich2019analysis}
A.~Katsevich.
\newblock Resolution analysis of inverting the generalized {R}adon transform
  from discrete data in $\mathbf{R}^3$.
\newblock {\em SIAM Journal on Mathematical Analysis}, 52(4):3990--4021, 2020.

\bibitem{Katsevich2021}
A.~Katsevich.
\newblock Resolution of 2{D} reconstruction of functions with nonsmooth edges
  from discrete {R}adon transform data.
\newblock {\em arXiv:2112.10286}, 2021.

\bibitem{Lemoine}
C.~Lemoine.
\newblock Fourier transforms of homogeneous distribution.
\newblock {\em Ann. Scuola Norm. Sup. Pisa Cl. Sci. (3)}, 26:117--149, 1972.

\bibitem{Louis-81}
A.~K. Louis.
\newblock Ghosts in tomography---the null space of the {R}adon transform.
\newblock {\em Math. Methods Appl. Sci.}, 3(1):1--10, 1981.

\bibitem{Louis-84}
A.~K. Louis.
\newblock Nonuniqueness in inverse {R}adon problems: the frequency distribution
  of the ghosts.
\newblock {\em Math. Z.}, 185(3):429--440, 1984.

\bibitem{Martinez_book}
A.~Martinez.
\newblock {\em An introduction to semiclassical and microlocal analysis}.
\newblock Universitext. Springer-Verlag, New York, 2002.

\bibitem{Chase_Sampling}
C.~Mathison.
\newblock Sampling in thermoacoustic tomography.
\newblock {\em J. Inverse Ill-Posed Probl.}, 28(6):881--897, 2020.

\bibitem{Melrose-Uhlmann-79}
R.~B. Melrose and G.~A. Uhlmann.
\newblock Lagrangian intersection and the {C}auchy problem.
\newblock {\em Comm. Pure Appl. Math.}, 32(4):483--519, 1979.

\bibitem{MS-sampling}
F.~Monard and P.~Stefanov.
\newblock Sampling the {X}-ray transform on simple surfaces.
\newblock {\em arXiv:2110.05761}, 2022.

\bibitem{Natterer-book}
F.~Natterer.
\newblock {\em The mathematics of computerized tomography}.
\newblock B. G. Teubner, Stuttgart, 1986.

\bibitem{Natterer-sampling1993}
F.~Natterer.
\newblock Sampling in fan beam tomography.
\newblock {\em SIAM J. Appl. Math.}, 53(2):358--380, 1993.

\bibitem{Linh-15}
L.~V. Nguyen.
\newblock How strong are streak artifacts in limited angle computed tomography?
\newblock {\em Inverse Problems}, 31(5):055003, 26, 2015.

\bibitem{Rattey-sampling}
P.~Rattey and A.~Lindgren.
\newblock Sampling the 2-{D} {R}adon transform.
\newblock {\em IEEE Transactions on Acoustics, Speech, and Signal Processing},
  29(5):994--1002, Oct 1981.

\bibitem{S-identification}
P.~Stefanov.
\newblock The identification problem for the attenuated {X}-ray transform.
\newblock {\em Amer. J. Math.}, 136(5):1215--1247, 2014.

\bibitem{S-Sampling}
P.~Stefanov.
\newblock Semiclassical {S}ampling and {D}iscretization of {C}ertain {L}inear
  {I}nverse {P}roblems.
\newblock {\em SIAM J. Math. Anal.}, 52(6):5554--5597, 2020.

\bibitem{S-Tindel-noise}
P.~Stefanov and S.~Tindel.
\newblock Sampling linear inverse problems with noise.
\newblock {\em arXiv:2011.13489}, 2020.

\bibitem{Zworski_book}
M.~Zworski.
\newblock {\em Semiclassical analysis}, volume 138 of {\em Graduate {S}tudies
  in {M}athematics}.
\newblock American Mathematical Society, Providence, {RI}, 2012.

\end{thebibliography}

\end{document}